\tikzset{tab/.style={matrix of math nodes,column sep=-.35, row sep=-.35,text height=7pt,text width=7pt,align=center,inner sep=2,font=\footnotesize}}
\newif\iftikz
\newif\iflimitshapes
\newcommand{\oeis}[1]{\href{https://oeis.org/#1}{OEIS \texttt{#1}}}
\newcommand{\g}{\mathfrak{g}}
\newcommand{\fsl}{\mathfrak{sl}}
\newcommand{\gl}{\mathfrak{gl}}
\newcommand{\so}{\mathfrak{so}}
\newcommand{\spn}{\mathfrak{sp}}
\newcommand{\fw}{\Lambda} 
\newcommand{\coroot}{\alpha^{\vee}} 
\newcommand{\inner}[2]{\langle #1, #2 \rangle}
\newcommand{\iso}{\cong}
\newcommand{\qbinom}[3]{\genfrac{[}{]}{0pt}{}{#1}{#2}_{#3}}
\newcommand{\abs}[1]{\lvert #1 \rvert}
\newcommand{\Abs}[1]{\lVert #1 \rVert}
\newcommand{\absval}[1]{\left\lvert #1 \right\rvert}
\newcommand{\cp}{{\color{blue}+}}
\newcommand{\cm}{{\color{darkred}-}}
\newcommand{\zero}{\mathbf{0}}
\newcommand{\bigO}{\mathcal{O}} 
\newcommand{\branch}{\downarrow}  
\renewcommand{\Re}{\operatorname{Re}}
\renewcommand{\Im}{\operatorname{Im}}
\newcommand{\ds}{\; \mathrm{d}s}
\newcommand{\dt}{\; \mathrm{d}t}
\newcommand{\dx}{\; \mathrm{d}x}
\newcommand{\dy}{\; \mathrm{d}y}
\newcommand{\dz}{\; \mathrm{d}z}
\newcommand{\te}{\widetilde{e}}
\newcommand{\tf}{\widetilde{f}}
\DeclareMathOperator{\supp}{supp} 
\DeclareMathOperator{\wt}{wt} 
\DeclareMathOperator{\ch}{ch} 
\DeclareMathOperator{\sig}{sig} 
\DeclareMathOperator{\pv}{p.v.} 
\DeclareMathOperator{\Cliff}{Cliff} 
\DeclareMathOperator{\End}{End} 
\newcommand{\Spin}{\operatorname{Spin}} 
\newcommand{\Pin}{\operatorname{Pin}} 
\newcommand{\Sp}{Sp}
\newcommand{\SO}{SO}
\newcommand{\Or}{O}  
\newcommand{\SU}{SU}
\newcommand{\SL}{SL}
\newcommand{\GL}{GL}
\newcommand{\mcB}{\mathcal{B}}
\newcommand{\mcD}{\mathcal{D}}
\newcommand{\mcH}{\mathcal{H}}
\newcommand{\mcN}{\mathcal{N}}
\newcommand{\pp}{\mathbf{p}}
\newcommand{\uu}{\mathbf{u}}
\newcommand{\vv}{\mathbf{v}}
\newcommand{\xx}{\mathbf{x}}
\newcommand{\yy}{\mathbf{y}}
\newcommand{\ZZ}{\mathbb{Z}}
\newcommand{\RR}{\mathbb{R}}
\newcommand{\CC}{\mathbb{C}}
\newcommand{\bon}{\overline{1}}
\newcommand{\btw}{\overline{2}}
\newcommand{\bn}{\overline{n}}
\definecolor{darkred}{rgb}{0.7,0,0} 
\newcommand{\defn}[1]{{\color{darkred}\emph{#1}}} 
\definecolor{UQgold}{RGB}{196, 158, 54} 
\definecolor{UQpurple}{RGB}{73, 7, 94} 
\lstdefinelanguage{Sage}[]{Python}
{morekeywords={False,sage,True},sensitive=true}
\definecolor{dblackcolor}{rgb}{0.0,0.0,0.0}
\definecolor{dbluecolor}{rgb}{0.01,0.02,0.7}
\definecolor{dgreencolor}{rgb}{0.2,0.4,0.0}
\definecolor{dgraycolor}{rgb}{0.30,0.3,0.30}
\theoremstyle{plain}
\newtheorem{thm}{Theorem}[section]
\newtheorem{lemma}[thm]{Lemma}
\newtheorem{conj}[thm]{Conjecture}
\newtheorem{prop}[thm]{Proposition}
\newtheorem{cor}[thm]{Corollary}
\theoremstyle{definition}
\newtheorem{ex}[thm]{Example}
\newtheorem{remark}[thm]{Remark}
\newtheorem{problem}[thm]{Problem}
\numberwithin{equation}{section}
\begin{document}
\title[Skew Howe duality and limit shapes]{Skew Howe duality and limit shapes of Young diagrams}

\author[A.~Nazarov]{Anton Nazarov}
\address[A.~Nazarov]{Department of High Energy and Elementary Particle Physics, St.\ Petersburg State University, Ulyanovskaya 1, St.\ Petersburg, Russia, 198504}
\address[A.~Nazarov]{Yanqi Lake Beijing Institute of Mathematical Sciences and Applications, No.~544, Hefangkou Village, Huaibei Town, Huairou District, Beijing 101408, China}
\email{antonnaz@gmail.com}
\urladdr{http://hep.spbu.ru/index.php/en/1-nazarov}

\author[O.~Postnova]{Olga Postnova}
\address[O.~Postnova]{Laboratory of Mathematical Problems of Physics, St.\ Petersburg Department of Steklov Mathematical Institute of Russian Academy of Sciences, 27 Fontanka, St.\ Petersburg, Russia, 191023}
\email{postnova.olga@gmail.com}

\author[T.~Scrimshaw]{Travis Scrimshaw}
\address[T.~Scrimshaw]{School of Mathematics and Physics, The University of Queensland, St.\ Lucia, QLD 4072, Australia}
\email{tcscrims@gmail.com}
\curraddr{Department of Mathematics, Hokkaido University, 5 Ch\=ome Kita 8 J\=onishi, Kita Ward, Sapporo, Hokkaid\=o 060-0808, Japan}
\urladdr{https://tscrim.github.io/}

\keywords{limit shape, skew Howe duality, crystal basis, lozenge tiling, $z$-measure, Krawtchouk ensemble}
\subjclass[2010]{05A19, 60C05, 22E46, 60G55, 05E05}

\begin{abstract}
We consider the skew Howe duality for the action of certain dual pairs of Lie groups $(G_1, G_2)$ on the exterior algebra $\bigwedge(\CC^{n} \otimes \CC^{k})$ as a probability measure on Young diagrams by the decomposition into the sum of irreducible representations.
We prove a combinatorial version of this skew Howe duality for the pairs $(\GL_{n}, \GL_{k})$, $(\SO_{2n+1}, \Pin_{2k})$, $(\Sp_{2n}, \Sp_{2k})$, and $(\Or_{2n}, \SO_{k})$ using crystal bases, which allows us to interpret the skew Howe duality as a natural consequence of lattice paths on lozenge tilings of certain partial hexagonal domains. 
The $G_1$-representation multiplicity is given as a determinant formula using the Lindstr\"om--Gessel--Viennot lemma and as a product formula.
These admit natural $q$-analogs that we show equals the $q$-dimension of a $G_2$-representation (up to an overall factor of $q$), giving a refined version of the combinatorial skew Howe duality.
Using these product formulas (at $q =1$), we take the infinite rank limit and prove the diagrams converge uniformly to the limit shape.
\end{abstract}

\maketitle

\tableofcontents

\section{Introduction}
\label{sec:introduction}

The study of multiplicity-free actions of reductive dual pairs of groups has been very fruitful and is now usually called Howe duality~\cite{howe1995perspectives}.
The most well-known of such dualities is the $(\GL_{n},\GL_{k})$ duality from the action of $\GL_{n}\times \GL_{k}$ on the symmetric space $S(\CC^{n}\otimes \CC^{k})$.
Howe duality generalizes Schur--Weyl duality (see, \textit{e.g.},~\cite[Sec.~2.4]{howe1995perspectives}), and it is described combinatorially by the Robinson--Schensted--Knuth (RSK) algorithm, which bijectively maps a multiset with elements in $\{1, \dotsc, n\} \times \{1, \dotsc, k\}$ to a pair of semistandard (Young) tableaux of the same shape $\lambda$.
Furthermore, by using Kashiwara's crystal bases~\cite{K90,K91}, we can encode the $\GL_n \times \GL_k$ action on the multisets and on the irreducible representations using semistandard tableaux with RSK being the crystal isomorphism (see also~\cite{Lascoux03,vanLeeuwen06} and the relation with coplactic operators~\cite[Ch.~5]{Lothaire02}).

This duality is related to the most famous result in asymptotic representation theory, the Vershik--Kerov--Logan--Shepp limit shape~\cite{logan1977variational,vershik1977asymptotics}.
We can embed the regular representation of $S_k \subseteq \GL_k$ into $S^k(\CC^{k}\otimes \CC^{k})$ by using two-line representation of the permutations: For a permutation $\sigma$, we have $\prod_{i=1}^k e_i \otimes e_{\sigma(i)}$.
By restriction, RSK then bijectively maps a permutation of $k$ elements to a pair of \emph{standard} Young tableaux of the same shape $\lambda$.
Since there are more permutations of $k$ than partitions of $k$, the image of uniform random permutations under RSK defines the famous Plancherel probability measure on partitions of $k$.
This measure has the probability of $\lambda$ given by the ratio of the square of $f^{\lambda}$, the number of standard Young tableaux of shape $\lambda$, and $k!$.
We can reinterpret this using representation theory (over $\CC$) as the regular representation has dimension $k!$ and decomposes into all of its irreducible representations $S^{\lambda}$ with multiplicity equal to $\dim S^{\lambda}$.
For $S_k$, irreducible modules $S^{\lambda}$ are the Specht modules, where $\lambda$ ranges over all partitions of $k$ with $\dim S^{\lambda} = f^{\lambda}$.
Hence, the Plancherel measure is given by
\[
\mu^P_{k}(\lambda) = \frac{(f^{\lambda})^2}{k!}= \frac{(\dim S^{\lambda})^{2}}{k!}.
\]
In the limit $k\to\infty$, the Plancherel measure is concentrated on the Vershik--Kerov--Logan--Shepp limit shape computed in~\cite{logan1977variational,vershik1977asymptotics}.

Next, Schur--Weyl duality is described as the decomposition of commuting actions of $S_k$ and $\GL_n$ on $(\CC^n)^{\otimes k}$.
In the paper~\cite{kerov1986asymptotic}, S.V.~Kerov used Schur--Weyl duality\footnote{
In~\cite{kerov1986asymptotic}, the group $\SU_n$ was considered, but the complexification is $\SL_n$ and this does not change the dimension of the irreducible representations, which are also equal to those of $\GL_n$.
} to construct a similar measure on Young diagrams $\lambda$ of size $k$ as
\[
\mu^{SW}_{n,k}(\lambda) = \frac{\dim S^{\lambda} \cdot \dim V_{\GL_n}(\lambda)}{n^{k}}.
\]
We can see this formula through RSK by embedding $(\CC^n)^{\otimes k}$ into $S^k(\CC^n \otimes \CC^k)$ by $v_1 \otimes \cdots \otimes v_k \mapsto (v_1 \otimes e_1) \dotsm (v_k \otimes e_k)$, where $\{e_1, \dotsc, e_k\}$ is the standard basis of $\CC^k$.
In the limit $n,k\to\infty$ such that $k/n \to \mathrm{const}$, the Vershik--Kerov--Logan--Shepp limit shape is recovered. 
This measure was also studied by Biane~\cite{Biane01}, but under the limit $\sqrt{k}/n \to \mathrm{const}$.
Under this scaling, a family of curves depending on this constant is obtained, and the Vershik--Kerov--Logan--Shepp limit shape is when this constant is $0$.

Returning back to the general $(\GL_n, \GL_k)$ duality, we note that the symmetric space is infinite dimensional, so it does not allow an immediate measure on partitions. If we restrict to $S^m(\CC^n \otimes \CC^k)$ or so that the degree of $e_1 \in \CC^n$ to be at most $m$, we can define a probability measures on Young diagrams of size $m$ or on those with $\lambda_1 \leq m$ (or $\lambda$ is contained in a $\min(n, k) \times m$ rectangle), respectively, as
\[
\mu^{(m)}_{n,k}(\lambda) = \frac{\dim V_{\GL_n}(\lambda) \dim V_{\GL_k}(\lambda)}{\displaystyle \binom{nk+m-1}{m}},
\qquad\qquad
\mu^{\square m}_{n,k}(\lambda) = \frac{\dim V_{\GL_n}(\lambda) \dim V_{\GL_k}(\lambda)}{\displaystyle \prod_{a=1}^n \prod_{b=1}^m \prod_{c=1}^k \frac{a+b+c-1}{a+b+c-2}},
\]
respectively.
The measure $\mu^{\square m}_{n,k}$ is related to the arctic circle limit shape of lozenge tilings of a hexagon~\cite{BKMM03,CLP98,Gorin08,Petrov14,Petrov15} (see also~\cite{Gorin21}) by applying RSK, taking the corresponding pair of Gelfand--Tsetlin (GT) patterns, and joining them together to form a plane partition inside of an $n \times m \times k$ box (see, \textit{e.g.},~\cite[Ch.~7]{ECII}; the number of plane partitions in a box is due to MacMahon~\cite{MacMahon96,MacMahon15}) and projecting.
In order to get the full symmetric space, we can take the refined data of the characters instead of taking dimensions, and we obtain a well-defined probability measure by the Cauchy identity of Schur functions:
\begin{equation}
\label{eq:cauchy}
\sum_{\ell(\lambda) \leq \min(n,k)} s_{\lambda}(x_1, \dotsc, x_n) s_{\lambda}(y_1, \dotsc, y_k) = \prod_{i=1}^n \prod_{j=1}^k \frac{1}{1 - x_i y_j}.
\end{equation}
When $n, k \to \infty$, we obtain the famous Schur measure on partitions~\cite{Okounkov00,Okounkov01}.

Another variant of Howe duality is skew Howe duality~\cite[Thm.~4.1.1]{howe1995perspectives}, where there is a multiplicity-free action of a pair of Lie groups $(G_1, G_2)$ on the exterior algebra $\bigwedge\left(\CC^{n}\otimes (\CC^k)^*\right)$.
This is usually proven with the use of the Schur duality, and we have the multiplicity-free decomposition
\[
\bigwedge\left(\CC^{n}\otimes (\CC^k)^* \right) \iso \bigoplus_{\lambda \subseteq k^n} V_{G_{1}}(\lambda)\otimes V_{G_{2}}(\overline{\lambda}'),
\]
where $\overline{\lambda}'$ is the conjugate of the complement diagram of $\lambda$ inside an $n \times k$ rectangle.
One key advantage of the exterior algebra over the symmetric algebra is that it is finite dimensional, which allows us to introduce a probability measure on diagrams
\begin{equation}
\label{eq:skew_howe_measure}
\mu_{n,k}(\lambda) = \frac{\dim V_{G_{1}}(\lambda) \cdot \dim  V_{G_{2}}(\overline{\lambda}')}{2^{nk}}.
\end{equation}
The measure $\mu_{n,k}$ will be the main focus of this paper.
The exterior algebra can be also seen as a tensor power $\left(\bigwedge \CC^{n}\right)^{\otimes k}$, and thus skew Howe duality can be used to provide multiplicity formulas for a tensor power decomposition
\[
\left(\bigwedge \CC^{n}\right)^{\otimes k} \iso \bigoplus_{\lambda \subseteq k^n} M^k(\lambda) \cdot V_{G_{1}}(\lambda),
\]
where $M^k(\lambda) = \dim V_{G_2}(\overline{\lambda}')$.
Hence, if the multiplicity of $V(\lambda)$ in $V^{\otimes k}$ for some $G_1$-representation $V$ equals the dimension of the irreducible $G_2$-representation $V(\overline{\lambda}')$, then we call this \defn{combinatorial skew Howe duality}. 
(We note that this does not imply a (commuting) $G_2$ action, but it says there is a highest weight $G_2$ Kashiwara crystal structure on combinatorial objects counting the multiplicity.)
Moreover, the probability measure becomes $\mu_{n,k}(\lambda) = 2^{-nk} M^k(\lambda) \dim V(\lambda)$.
For skew Howe duality over other fields, see also~\cite{AB95,Moeglin89}.

We look at some known examples at the level of characters, all of which give rise to character measures on partitions.
We first consider the case of $(G_1, G_2) = (\GL_n, \GL_k)$, which can be proven using a variation of RSK~\cite{knuth1970permutations} (called dual RSK in~\cite[Ch.~7]{ECII}) showing pairs of semistandard Young tableaux of shape $\lambda$ and $\lambda'$ are in one-to-one correspondence with the $n \times k$ matrices of zeros and ones, yielding the dual Cauchy identity:
\[
\sum_{\lambda \subseteq k^n} s_{\lambda}(x_1, \dotsc, x_n) s_{\lambda'}(y_1, \dotsc, y_k) = \prod_{i=1}^n \prod_{j=1}^k (1 + x_i y_j).
\]
Here we used $V_{\GL_k}(\overline{\lambda}')^* \iso V_{\GL_k}(\lambda')$ up to a shift of the determinant representation (\textit{cf.}~\cite[Thm.~4.1.1]{howe1995perspectives}).
This has been applied to the random matrix theory with computing the correlations of characteristic polynomials of the unitary group~\cite{bump2006averages} with generalizations to other random matrix ensembles given in~\cite{JKM21}.
This was also studied by Gravner, Tracy, and Widom under the name oriented digital boiling~\cite{GTW01,GTW02,GTW02II}.
Panova and \'Sniady~\cite{panova2018skew} considered the analog of $\mu_{n,k}^{(m)}$, where they consider the exterior power $\bigwedge^{m}(\CC^{n}\otimes\CC^{k})$ with the corresponding probability measure
\[
\mu_{n,k}^{\langle m \rangle}(\lambda)=\frac{\dim V_{GL_{n}}(\lambda)\cdot\dim V_{GL_{k}}(\overline\lambda')}{\binom{nk}{m}}
\]
for the diagrams of $m$ boxes in the $n\times k$ rectangle.
They compute the limit shapes for the limit $n,k,m \to \infty$, $\frac{k}{n}\to\mathrm{const}$, $\frac{m}{nk}\to\mathrm{const}$ by reformulating the problem in terms of the representations of permutation group as the level lines of the limit shape for plane partitions presented in~\cite{pittel2007limit}.
The measure $\mu^{\langle m \rangle}_{n,k}$ has also appeared in~\cite[Sec.~2.1.3]{GTW01} in relation to Johansson's result~\cite{Johansson01} on the Krawtchouk ensemble and is a refinement of the measure $\mu_{n,k}$ from~\eqref{eq:skew_howe_measure} since
\begin{equation}
\label{eq:skew_refinements}
\mu_{n,k}(\lambda) = \sum_{m=0}^{nk} 2^{-nk} \binom{nk}{p} \mu_{n,k}^{\langle m \rangle}(\lambda),
\end{equation}

For $(\Sp_{2n}, \Sp_{2k})$, this yields the following character identity first due to King~\cite{King75} with later proofs due to Jimbo and Miwa~\cite{JM85} and Howe~\cite{howe1995perspectives}:
\begin{equation}
\label{eq:sp_char_prob}
\sum_{\lambda \subseteq k^n} \chi^{\Sp_{2n}}_{\lambda}(x_1, \dotsc, x_n) \chi^{\Sp_{2k}}_{\overline{\lambda}'}(y_1, \dotsc, y_k) = \prod_{i=1}^n \prod_{j=1}^k (x_i + x_i^{-1} + y_j + y_j^{-1}).
\end{equation}
This also has an RSK-like proof~\cite{berele1986schensted,Sundaram90III,Terada93} and has been applied to random matrix theory in~\cite{LO20}.
The case $(G_1, \Sp_{2k})$ was examined in Heo and Kwon~\cite{HK20}, which recovers~\eqref{eq:sp_char_prob} and other identities such as~\cite[Eq.~(1.4)]{HK20}.
Proctor~\cite{proctor1993reflection} also provides proofs of numerous character identities, including skew Howe dualities, using the reflection method.
An RSK-type algorithm has also been used for the orthogonal group by Sundaram~\cite{Sundaram90}.
Generalizations of some of these identities are known, such as using Macdonald polynomials~\cite[p.~329]{Macdonald98}, Koornwinder polynomials~\cite{Okounkov98}, and an extension of continuous $q$-Hermite polynomials~\cite{Nteka18}.

\begin{table}
\[
\begin{array}{cccccc}
\toprule
G_1 & \GL_n & \SO_{2n+1}, k \text{ even} & \SO_{2n+1}, k \text{ odd} & \Sp_{2n} & \Or_{2n}
\\ \midrule
G_2 & \GL_k & \Pin_k & \Sp_{k-1} & \Sp_{2k} & \SO_k
\\ \bottomrule
\end{array}
\]
\caption{The combinatorial skew Howe duality obtained for $V^{\otimes k}$.}
\label{table:results}
\end{table}

In the present paper, we first examine the pairs $(\GL_{n}, \GL_{k})$, $(\SO_{2n+1}, \Pin_{2k})$, $(\Sp_{2n}, \Sp_{2k})$, and $(\Or_{2n}, \SO_{k})$ and prove a natural $q$-analog of combinatorial skew Howe duality.
We begin by looking at the multiplicity $M^k(\lambda)$ of $V(\lambda)$ inside $V^{\otimes k}$, where $V$ is the following representation for the group~$G_1$:
\begin{itemize}
\item[$\GL_n$:] $V = \bigwedge \CC^n$, the exterior algebra of the natural representation;
\item[$\SO_{2n+1}$:] $V$ is the spinor representation;
\item[$\Sp_{2n}$:] $V = \bigwedge \CC^{2n}$, the exterior algebra of the natural representation;
\item[$\SO_{2n}$:] $V$ is the sum of the two nonisomorphic spinor representations, which is irreducible as an $\Or_{2n}$ representation.
\end{itemize}
Our proof uses the crystal basis and the nonintersecting lattice paths approach formulated in~\cite{OS19II} to write $M^k(\lambda)$ as certain determinants of binomial coefficients or Catalan triangle numbers using the Lindstr\"om--Gessel--Viennot (LGV) lemma~\cite{GV85,Lindstrom73}.
Next, we take a natural $q$-deformation of these determinants and use techniques from~\cite{Krat99} to transform the determinant formulas for the multiplicities into product formulas with $q$-integers, which when $q = 1$ is similar to those in the work of Kulish, Lyakhovsky, and Postnova~\cite{KLP12,KLP12II}.
Again using the LGV lemma, we show that the $q$-analogs of our determinant formulas give $\dim_q V(\overline{\lambda}')$, the $q$-dimension of the irreducible $G_2$-representation.
Taking $q = 1$, we obtain the combinatorial skew Howe duality.
We summarize our results in Table~\ref{table:results}.
We remark that the cases for $\SO_N$ when $k$ is an odd power is not a skew Howe duality in the sense we have described above as it does not come from a decomposition of an exterior algebra.
However, this can be described as a type of Howe duality and our product formulas do not depend on the parity of $k$.

While the $q$-analog of combinatorial skew Howe duality was previously known from specializing the aforementioned character formulas, our proofs are new with more of a direct representation theory application.
Furthermore, the $q$-analogs of the determinant formulas are generally new, even for the case $q = 1$, and the product formulas are entirely new except for $q=1$ for $G_1 = \SO_{2n+1}$ in~\cite{KLP12,KLP12II,kulish2012tensor}.
For $(\GL_{n}, \GL_{k})$, the determinant formula was previously obtained in~\cite{EG95} purely combinatorially as a number of certain lattice path, and the $q$-analog was independently shown by Cigler~\cite[Thm.~8]{Cigler21}.
In both of these cases, the connection to the representation theory was not established.
In~\cite{KLP12,KLP12II}, the case $(\SO_{2n+1},\Pin_{2k})$ was derived without noticing the importance of skew Howe duality.
Determinant multiplicity formulas for $(\SO_{2n+1}, \Pin_{2k})$ and $(\Sp_{2n}, \Sp_{2k})$ were shown in~\cite{OS19II} also without noticing the skew Howe duality.
In all of these cases, the $q$-analog of these formulas were not known.

Let us discuss the dependence on the parity of $k$ for the decomposition for $V(\fw_n)^{\otimes k}$ of the spin representation for $\SO_{2n+1}$.
We note that there is an alternating form on $V(\fw_n)$, which means the tensor power can embed in an orthogonal or symplectic space depending on the parity of $k$ by building a symmetric or alternating form, respectively.
Thus, we have an action of $\Pin_k$ or $\Sp_{k-1}$, respectively, since they preserve a symmetric or alternating form (see also~\cite{howe1995perspectives}).
There is also an RSK-type algorithm that recovers the corresponding character identities~\cite[$(\mathsf{D}_x^p\mathsf{B}_y)$, $(\mathsf{B}_x\mathsf{C}_y)$]{proctor1993reflection} due to Benkart and Stroomer~\cite{benkart1991tableaux}.
An analogous RSK-type algorithm for the $\Or_{2n}$ spinor was given by Okada~\cite{okada1993robinson}.
We also note that tensor powers of spin representations has been examined by Rowell and Wenzl~\cite[Lemma~2.1]{RW17}.

We also provide a natural interpretation of the appearance of lattice paths as they have an innate description with lozenge tilings of a certain half hexagonal domain.
Indeed, lozenge tilings of the half hexagon naturally correspond to GT patterns that arise to describe the representations of $\GL_k$, which also correspond to the lattice paths describing $\dim_q V(\overline{\lambda}')$.
By taking a different set of paths, we recover the lattice paths that we used to compute the multiplicity of $V(\lambda)$.
Joining this to be the full hexagon with side lengths alternating between $k$ and $n$ and a seam down the middle encoding $\lambda$, we recover our $\GL_n \times \GL_k$ probability measure (up to the normalization factor of $2^{nk}$).
The other dual pairs arise from imposing extra symmetries on the hexagon from the symmetries on GT patterns described by Proctor~\cite{Proctor94}, which have also been considered by Bufetov and Gorin~\cite[Sec.~3.2]{BG15}.
Similarly, many of the representations we consider can be seen as arising from $\bigwedge \CC^n$ from the branching rule from the inclusion $G_1 \to \GL_n$.
We are using a refined version of the skew Howe duality for $G_1 = \SO_{2n+p}$ arising from the relation $(1 + p) V^{\otimes 2} \iso \bigwedge \CC^{2n+p}$, where $m V$ for an integer $m$ means $V \oplus \cdots \oplus V$ with $V$ occurring $m$ times.

The second part of this paper is dedicated to our novel asymptotic results on the limit shapes of generalized Young diagrams.
We apply our product formulas at $q = 1$ to undertake the asymptotic analysis to compute the limit shapes for the probability measure $\mu_{n,k}(\lambda)$ introduced above in the limit $n,k\to\infty,n/k\to\mathrm{const}$.
This main asymptotic result is formulated as Theorem \ref{thm:limit_shape_gl}, where we derive the limit shapes for all the dual pairs $(G_{1},G_{2})$ mentioned above and prove the convergence of the diagrams to the limit shapes. 
This relies strongly on our product formulas, which are well-suited to this asymptotic analysis and describe the dimension of $V(\overline{\lambda}')$ in terms of $\lambda$ (as opposed to the Weyl dimension formula).
In particular, we use these formulas to produce an integral functional (given by~\eqref{eq:limit-shape-functional}) and solve a variational problem (Lemma~\ref{lemma:limit-shape-function}) in the spirit of the Vershik--Kerov--Logan--Schepp proof.

Since the exterior algebra can be seen as a tensor power, we obtain new results on the asymptotic analysis of the tensor power decomposition.
The asymptotic analysis of the tensor power multiplicities and corresponding probability measure was previously done for a fixed $n$ and $k$ going to infinity in~\cite{nazarov2018limit,tate2004lattice}.
The asymptotics of the probability measure for the tensor power $2k$ of spinor representation of $\SO_{2n+1}$ for both $n, k$ going to infinity was considered in~\cite{NNP20}, where the convergence of generalized Young diagrams to the limit shape was proven.
In the present paper we demonstrate that this result is a consequence of skew Howe duality for $(\SO_{2n+1}, \Pin_{2k})$.
We discover that the limit shapes of Young diagrams for the symplectic and orthogonal groups are ``halves'' of the limit shape of the general linear group.
This can be seen as a reflection of the fact that the branching rule from $\GL_N$ to $\Sp_{2n}$ and $\SO_N$ induces a symmetry in the combinatorics, such as the GT patterns (see, \textit{e.g.},~\cite{Proctor94}).

Our limit shapes for $(\GL_n, \GL_k)$ are already known as well.
This case was computed in~\cite{GTW01} (with their $p = 1/2$) using Toeplitz determinants and saddle point analysis.
It is also a consequence of the limit shapes of Panova and \'Sniady~\cite{panova2018skew} from Equation~\eqref{eq:skew_refinements}, where our limit shape is their limit at $m = \frac{nk}{2}$.
There is also a universality with the fluctuations along the left boundary point with~\cite{Biane01} (\textit{cf.}~\cite{GTW01}), even through the limit shapes are different as those in~\cite{Biane01} are unbounded to the right (with rescaling so the left boundary is at $-1$).
All of the other $(G_1, G_2)$ pairs in Table~\ref{table:results} are new results as far as the authors are aware.

Additionally, we demonstrate that the probability measure for the $(\GL_{n},\GL_{k})$ skew Howe duality is given by the Krawtchouk ensemble (\textit{cf.}~\cite[Sec.~5]{borodin2007asymptotics}; Johansson~\cite{Johansson01} attributes the first appearance of this ensemble to Sepp\"al\"ainen~\cite{Sepp98}).
The Krawtchouk ensemble is a specialization~\cite{BO06} of the $z$-measure~\cite{KOV93}.
Analogously, we show that the skew Howe dualities for the series $\SO_{2n+1},\Sp_{2n},\SO_{2n}$ is a specialization of the $BC$ $z$-measure recently introduced by Cuenca~\cite{Cuenca18} up to a sign and renormalization.
We also show that $\mu_{n,k}$ equals the spectral measure~\cite{BO05gamma,BK10,OO12} for a particular extremal weight.
We discuss these relationships more precisely in Section~\ref{sec:limit_shapes_poly_ensembles}. 

This paper is organized as follows.
In Section~\ref{sec:skew_howe}, we recall basic facts on skew Howe duality.
In Section~\ref{sec:background}, we provide a general background to the combinatorial methods that are employed in this paper.
In Section~\ref{sec:combinatorial_duality}, we derive the multiplicity formulas, prove combinatorial skew Howe duality, and establish the connection to lozenge tilings.
In Section~\ref{sec:limit-shapes-young}, we derive the limit shapes and prove the convergence of the diagrams to the limit shape. We discuss the relation of the limit shapes to the insertion algorithms.
In Section \ref{sec:open_problems}, we list some open problems.

\subsection*{Acknowledgements}

The authors thank Pavel Etingof and Nicolai Reshetikhin for useful conversions.
The authors thank Grigory Olshanski and Evgeny Feigin for pointing out the relation of this work to the skew Howe duality.
The authors thank Pavel Nikitin for pointing out the connection to Krawtchouk ensemble. 
The authors thank Alexei Borodin for the references~\cite{BO06,Cuenca18,OO12} and useful discussions on the $z$-measure, the spectral measure, and the relationship to the Krawtchouk and Meixner ensembles.
The authors thank Daniel Bump for describing how the decomposition of the spinor representation of $\SO_{2n+1}$ depends on the parity of $k$.
The authors thank Christian Krattenthaler for simplifications of our proofs of the determinant-to-product results and noting the $q$-determinant product formulas in~\cite{BKW16} in terms of the conjugate shapes.
The authors thank the anonymous referee for many useful comments.
This work benefited from computations using \textsc{SageMath}~\cite{sage,combinat}.

The work of A.~Nazarov and O.~Postnova is supported by the Russian Science Foundation under grant No.~21-11-00141 and T.~Scrimshaw was partially supported by Grant-in-Aid for JSPS Fellows 21F51028.
This work was partly supported by Osaka City University Advanced Mathematical Institute (MEXT Joint Usage/Research Center on Mathematics and Theoretical Physics JPMXP0619217849).

\section{Classical groups and skew Howe duality}
\label{sec:skew_howe}

\subsection{Clifford algebras and orthogonal groups}
\label{sec:clifford}

To study the action of Lie groups on exterior algebras, we will first recall basic facts about Clifford algebra from~\cite{FH91}. 
The Clifford algebra $C(Q)=\Cliff(V,Q)$ associated to a finite-dimensional, complex, positive-definite inner product space $(V,Q)$ is defined as the quotient of the tensor algebra $T(V) = \bigoplus_{k=0}^{\infty} V^{\otimes k}$ by the two-sided ideal  of $T(V)$ generated by the elements of the form $v\otimes v + 2Q(v,v) \cdot 1_{T(V)}$.
The natural $\ZZ_2$-grading of $T(V)$ into even and odd tensors induces a $\ZZ_2$-grading of the Clifford algebra $C(Q) = C^{even} \oplus C^{odd}$.
The space $V$ is also a subspace of $C(Q)$.
We let $\gl(V) := \End(V)$ denote the Lie algebra of all linear endomorphisms of $V$.

Let $N = \dim V$.
We have the special orthogonal Lie algebra $\so_N(\CC) = \so_N(Q) = C(Q)^{[2]}$, where $C(Q)^{[2]}$ is the (homogeneous) degree $2$ elements of $C(Q)$ from the $\ZZ$-filtration induced from the natural $\ZZ$-grading on the tensor algebra.
Denote by $\Pin_N$ the subgroup of the group of all invertible elements of $C(Q)$ generated by the elements $v\in V$ such that $v^2 = 1$ (equivalently $Q(v,v) = 1$).
The group $\Pin_N$ is a two-fold cover of $\Or_N$, where $\Or_N$ is the orthogonal group of invertible linear maps of $V$ that preserve $Q$.
We will denote by $\Spin_N$ the preimage of $\SO_N$ under natural projection $\Pin_N \to \Or_N$, which is also equal to $\Pin_N \cap C(Q)^{even}$.
Note that the Lie algebra of $\Spin_N$ and $\SO_N$ is isomorphic to $\so_N$, and $V$ is the natural representation of $\so_N$.

Below we will consider when $V$ is even and odd dimensional separately.

\subsubsection{The even dimensional case}

Let $V := \CC^{2n}$, and we write $V = V_+ \oplus V_-$, where $V_+$ has a basis $\{e_1, \dotsc, e_n\}$ and $V_-$ has a basis $\{e_{-n}, \dotsc, e_{-1}\}$.
Furthermore, we choose $V_+$ and $V_-$ to be maximal isotropic subspaces for $Q$.
 
We define $S = \bigwedge V_-$.
The standard basis of $S$ consists of the elements $e_{i_1} \wedge \cdots \wedge e_{i_n}$ with $i_1 < \cdots < i_n$.
There is a unique way, up to isomorphism, to make $S$ into a simple $C(Q)$-module.
The decomposition $V=V_+\oplus V_-$ determines an isomorphism of algebras~\cite{FH91}:
 \[
 C(Q) \iso \End(S).
 \]
Moreover, there is an isomorphism
\[
C(Q)^{even} \iso \End(\bigwedge\nolimits^{even} V_-) \oplus \End(\bigwedge\nolimits^{odd} V_-)
\]
that leads to an embedding of Lie algebras $\so_{2n}(\CC) \subseteq C(Q)^{even} \iso \gl(\bigwedge^{even} V_-) \oplus \gl(\bigwedge^{odd} V_-)$.
Hence, there are two representations of $\so_{2n}$, which we denote by
\[
S^{+} = \bigwedge\nolimits^{even} V_-\;\; \text{and}\;\; S^{-} = \bigwedge\nolimits^{odd} V_-.
\]
These representations are the half-spin representations of $\so_{2n}$ and their highest weights are the fundamental weights $\fw_n$ and $\fw_{n-1}$:
\begin{align*}
\text{for even $n$:} & \;\;S^+=V_{\so_{2n}}(\fw_{n-1})\;\;\text{and}\;\;S^{-} = V_{\so_{2n}}(\fw_{n}),
\\
\text{for odd $n$:} & \;\;S^-=V_{\so_{2n}}(\fw_{n-1})\;\;\text{and} \;\; S^{+} = V_{\so_{2n}}(\fw_{n}).
\end{align*}
Their sum $\bigwedge V_-=S^+\oplus S^-$ is called the spin representation of $\so_{2n}$.
The vector space $S$ when regarded as $\Pin_{2n}$-module is called the \defn{spinor $\Pin_{2n}$-module}.

\subsubsection{The odd dimensional case}

Let $V = \CC^{2n+1}$, which we can decompose as $V = V_+ \oplus V_0 \oplus V_-$, where we take $V_+$ and $V_-$ to be maximal isotropic subspaces as before.
Thus, we have $\dim V_0 = 1$, which can be described as the orthogonal complement, under the inner product defined by $Q$, of $V_+ \oplus V_-$.
There is  a unique up to isomorphism structure of simple $C(Q)^{even}$-module on $S = \bigwedge V_-$.
The decomposition $V=V_+\oplus V_0\oplus V_-$ determines an isomorphism of algebras~\cite{FH91}: 
\[
C(Q) \iso \End\left( \bigwedge V_- \right) \oplus \End\left( \bigwedge V_+ \right).
\]
Moreover, there is an isomorphism
\[
C(Q)^{even} \iso \End\left(\bigwedge\nolimits^{even} V_- \right)
\]
that leads to an embedding of Lie algebras $\so_{2n+1} \subseteq C(Q)^{even} \iso \gl\left(\bigwedge V_- \right) = \gl(S)$.
The representation $S = \bigwedge V_-$ is the irreducible representation of $\so_{2n+1}$ with highest weight $\fw_n$:
\[
S = \bigwedge V_- = V_{\so_{2n+1}}(\fw_{n}).
\]

\subsection{Skew Howe duality}

In the paper~\cite{howe1989remarks}, Roger Howe gives dual pairs of Lie groups and what are now known as Howe correspondences.
We will be interested in the following cases.
Let $n, k$ be nonnegative integers and let $(G_1,G_2)$ be one of the following pairs of classical groups:
\[
(\GL_n, \GL_k),
\qquad
(\Sp_{2n}, \Sp_{2k}),
\qquad
( \Or_{2n}, \SO_{2k}),
\qquad
(\SO_{2n+1}, \Pin_{2k}).
\]
The skew Howe duality for the pairs $(G_1, G_2)$ of classical groups above is given in~\cite{Adamovich96}, where the corresponding $G_1\times G_2$-module is constructed explicitly.
We will follow notations from~\cite{Adamovich96}.

Denote by $V$ the natural $G_1$-module and by $W$ the natural $G_2$-module.  Below we will consider the above mentioned pairs of groups separately.
We will denote by $V_{G_1}(\lambda)$ the simple $G_1$-module and by $V_{G_2}(\overline{\lambda}')$ the simple $G_2$-module. 

We begin by considering the $(\GL_{n}, \GL_{k})$-case.
For $G_1 = \GL_{n}$ the natural module is  $V=\CC^{n}$.
Similarly, for $G_2 = \GL_{k}$ the natural module is $W = \CC^{k}$. 

Firstly, recall that skew Howe duality in $(\GL_{n}, \GL_{k})$-case
\begin{equation}
  \label{eq:gl_gl_skew_Howe}
  \bigwedge\left(\CC^{n}\otimes (\CC^{k})^* \right) = \bigwedge (V \otimes W^*) \iso \bigoplus_{\lambda} V_{GL_{n}}(\lambda)\otimes V_{GL_{k}}(\overline{\lambda}'),
\end{equation}
where $V_{\GL_{n}}(\lambda)$ and $V_{\GL_{k}}(\overline{\lambda}')$ are irreducible modules of $\GL_{n}$ and $\GL_{k}$ correspondingly and $\overline{\lambda}'$ is the conjugate of the complement diagram of $\lambda$ in the $n \times k$ rectangle.
The skew Howe duality decomposition~\eqref{eq:gl_gl_skew_Howe} could also be viewed as the decomposition of a $\GL_{n}$-module into irreducible submodules
\begin{equation}
\label{eq:skew_Howe_A}
\left(\bigwedge V\right)^{\otimes k} \iso \bigoplus_{\lambda} \dim\bigl( V_{\GL_{k}}(\overline{\lambda}') \bigr) V_{\GL_{n}}(\lambda).
\end{equation}
Thus, the dimension of $\GL_n$-module that corresponds to the complement diagram $\overline{\lambda}'$ can be seen as a tensor product decomposition multiplicity
\[
M^k(\lambda) = \dim  V_{\GL_k}(\overline{\lambda}').
\]

Consider $(\Sp_{2n}, \Sp_{2k})$ case.
We have
\[
V=\CC^{2k} = V_+ \oplus V_-,
\qquad\qquad
W=\CC^{2n} = W_+ \oplus W_-,
\]
such that $\dim V_{\pm} = n$ and $V_{\pm}$ are isotropic with respect to the preserved skew-symmetric bilinear form, and similarly for $W$.
The skew Howe duality implies multiplicity free decomposition 
\begin{equation}
\label{eq:spsp}
\bigwedge(\CC^{2n} \otimes (\CC^{k})^*) = \bigwedge(W \otimes (\CC^{k})^*) \iso \bigoplus_{\lambda} V_{\Sp_{2n}}(\lambda)\otimes V_{\Sp_{2k}}(\overline{\lambda}')
\end{equation}
and could be viewed as the decomposition of a $\Sp_{2n}$-module into irreducible submodules
\[
\left(\bigwedge W\right)^{\otimes k} \iso \bigoplus_{\lambda} \dim\bigl( V_{\Sp_{2k}}(\overline{\lambda}') \bigr) V_{\Sp_{2n}}(\lambda).
\]

For the other two pairs of groups, we can simplify the decomposition by expressing the exterior algebra of standard representation in terms of fundamental representations.
We will use the decomposition and notation given in Section~\ref{sec:clifford}.

We consider the $(\SO_{2n+1}, \Pin_{2k})$ case, which is
\[
V = \CC^{2n+1} = V_+ \oplus V_0 \oplus V_-,
\qquad\qquad
W = \CC^{2k} = W_+\oplus W_-,
\]
The skew Howe duality implies multiplicity free decomposition 
\begin{equation}
\label{eq:so2np1}
\bigwedge(\CC^{2n+1}\otimes \CC^{k}) = \bigwedge(V\otimes \CC^{k}) \iso \bigoplus_{\lambda} V_{\SO_{2n+1}}(\lambda)\otimes V_{\Pin_{2k}}(\overline{\lambda}').
\end{equation}
It could be viewed as the decomposition of a $\SO_{2n+1}$-module into irreducible submodules
\[
\left(\bigwedge V \right)^{\otimes k} \iso \bigoplus_{\lambda} \dim\bigl( V_{\Pin_{2k}}(\overline{\lambda}') \bigr) V_{\SO_{2n+1}}(\lambda).
\]
Let us look closely at the left hand side of this decomposition.
There exists an isomorphism
\[
\bigwedge V \iso \bigwedge V_- \otimes  \bigwedge V_0 \otimes \bigwedge V_+ \iso 2 \left(V_{\SO_{2n+1}}(\Lambda_n)\right)^{\otimes 2}
\]
due to the fact that $\bigwedge V_0$ is two dimensional (recall $\dim V_0 = 1$) and
\[
V_{\SO_{2n+1}}(\Lambda_n) = \bigwedge V_- \iso\bigwedge V_+
\]
is a spinor $\SO_{2n+1}$-module. 
On the other hand, recall that the group $\Pin_{2k}$ is a two-fold cover of the group $\Or_{2k}$.
Due to~\cite[Thm.~4.9]{King92}, if $\lambda$ has exactly $k$ rows then the $\Or_{2k}$-module is decomposable on restriction to $\SO_{2k}$ into the direct sum of two inequivalent irreducible $\SO_{2k}$-modules, the dimension of each being half that of  original $\Or_{2k}$-module:
\[
\dim\bigl( V_{\Pin_{2k}}(\overline{\lambda}') \bigr) =\dim\bigl( V_{O_{2k}}(\overline{\lambda}') \bigr)=2\dim\bigl( V_{\SO_{2k}}(\overline{\lambda}') \bigr).
\]
Therefore, this skew Howe duality implies a decomposition of an $\SO_{2n+1}$-module into irreducible submodules
\begin{equation}
\label{spinorrr}
2^k \left(V_{\SO_{2n+1}}(\Lambda_n)\right)^{\otimes 2k} \iso \bigoplus_{\lambda} 2 \dim\bigl( V_{\SO_{2k}}(\overline{\lambda}') \bigr) V_{\SO_{2n+1}}(\lambda).
\end{equation}


Finally, consider the $(\SO_{2n},O_{2k})$ case, where
\[
V = \CC^{2k} = V_+ \oplus V_-,
\qquad\qquad
W = \CC^{2n} = W_+ \oplus W_-,
\]
The skew Howe duality~\cite{Adamovich96} implies multiplicity free decomposition in $(\SO_{2n},\Pin_{2k})$ case:
\begin{equation}
\label{eq:so2nso2l}
\bigwedge(\CC^{2n} \otimes \CC^{k})
 =
   \bigwedge(W\otimes \CC^{k}) \iso \bigoplus_{\lambda}
   V_{SO_{2n}}(\lambda)\otimes V_{O_{2k}}(\overline{\lambda}'),
 \end{equation}
 It could be viewed as the decomposition of a $\SO_{2n}$-module into irreducible submodules
\[
\left(\bigwedge W\right)^{\otimes k} \iso \bigoplus_{\lambda} 2\dim(V_{\SO_{2k}}(\overline{\lambda}'))V_{\SO_{2n}}(\lambda).
\]
The exterior algebra of the standard representation of $\SO_{2n}$ decomposes as
\[
\bigwedge W \iso \bigwedge W_-\otimes  \bigwedge W_+ \iso \bigwedge W_-^{\otimes 2}.
\]
The spin module $\bigwedge W_-$ decomposes into even and odd parts:
\[
\bigwedge W_- \iso \bigwedge\nolimits^{even} W_- \oplus \bigwedge\nolimits^{odd}W_- = V_{\SO_{2n}}(\Lambda_{n-1}) \oplus V_{\SO_{2n}}(\Lambda_{n}).
\]
Therefore, the skew Howe duality implies a decomposition of a sum of half spin $\SO_{2n}$ fundamental modules into irreducible submodules
\[
 \bigl( V_{\SO_{2n}}(\Lambda_{n-1}) \oplus V_{\SO_{2n}}(\Lambda_{n}) \bigr)^{\otimes 2k} \iso \bigoplus_{\lambda} \dim\bigl( V_{\SO_{2k}}(\overline{\lambda}') \bigr) V_{\SO_{2n}}(\lambda).
\]


\section{Combinatorics}
\label{sec:background}

We give the necessary background on partitions, tableaux, highest weight representations, crystals, and the Lindstr\"om--Gessel--Viennot (LGV) lemma.
Fix a positive integer $n$.
Denote $[n] := \{1, 2, \dotsc, n\}$.
Let $\g$ denote a finite-dimensional simple Lie algebra of classical type (\textit{i.e.}, Cartan type ABCD) with indexing set $I$, simple roots $\{\alpha_i\}_{i \in i}$, fundamental weights $\{\fw_i\}_{i \in i}$, weight lattice $P$, simple coroots $\{\coroot_i\}_{i \in I}$, and inner product $\inner{\alpha_i}{\coroot_j} = C_{ij}$ with $[C_{ij}]_{i,j \in I}$ the Cartan matrix.
Let $\{\epsilon_i\}_{i=1}^n$ denote the standard basis of $(\frac{1}{2}\ZZ)^n$ with the standard embedding of $P$.

Note that the finite dimensional highest weight representation of the Lie groups is equivalent to that of its Lie algebra $\g$, which are more natural to discuss for Kashiwara crystals via its (Drinfel'd--Jimbo) quantum group.
Thus, we will freely use the Lie algebra in place of the Lie group when discussing the representation theory.

A \defn{partition} $\lambda$ is a weakly decreasing finite sequence of positive integers, and we draw the Young diagram of $\lambda$ using English convention.
We use the standard identification of partitions with elements in the dominant weight lattice $P^+$.
We denote
\[
\abs{\lambda} = \sum_{i=1}^{\ell} \lambda_i,
\qquad\qquad
\Abs{\lambda} = \sum_{i=1}^{\ell} (i-1) \lambda_i.
\]
the size and weighted size, respectively.

The $q$-analogs of numbers, factorials, and binomials are the standard
\[
[k]_q = 1 + q + \cdots + q^{k-1},
\qquad\qquad
[k]_q! = \prod_{m=1}^k [m]_q,
\qquad\qquad
\qbinom{k}{m}{q} = \frac{[k]_q!}{[m]_q! [k-m]_q!}.
\]
Following~\cite{OS19II}, we will also require the natural (Mahonian) $q$-analog of the triangle Catalan number given by
\[
\mathcal{C}_{n,k}(q) = \frac{[n+k]_q! [n-k+1]_q}{[k]_q! [n+1]_q!} = \frac{[n-k+1]_q}{[n+1]_q} \qbinom{n+k}{k}{q}
\]
for all $n \geq 0$ and $0 \leq k \leq n$.
We consider $\mathcal{C}_{n,k}(q) = 0$ if $n < 0$, $k < 0$, or $k > n$.

\subsection{Crystals}

An \defn{crystal} is a set $\mcB$ with \defn{crystal operators} $\te_i, \tf_i \colon \mcB \to \mcB \sqcup \{ \zero \}$, for $i \in I$, such that for the functions
\[
\varepsilon_i(b) := \max \{k \mid \te_i^k b \neq \zero\},
\qquad\qquad
\varphi_i(b) := \max \{k \mid \tf_i^k b \neq \zero\},
\qquad\qquad
\wt \colon \mcB \to P,
\]
the relations
\[
\te_i b = b' \quad \Longleftrightarrow \quad b = \tf_i b',
\qquad\qquad
\inner{\wt(b)}{\alpha_i} + \varepsilon_i(b) = \varphi_i(b)
\]
hold for all $i \in I$ and $b,b' \in \mcB$ and forms the crystal basis as defined by Kashiwara~\cite{K90,K91} of a  Drinfel'd--Jimbo quantum group $U_q(\g)$-module.
Our definition is what is called a regular or seminormal crystal in the literature (see, \textit{e.g.},~\cite{BS17} for additional information on crystals).
For the crystals considered here, we can encode them as edge $I$-colored (weighted) directed graphs, where $b \xrightarrow{i} b'$ means $\tf_i b = b'$.
We call an element $b \in \mcB$ \defn{highest weight} if $\te_i b = \zero$ for all $i \in I$.
For any $\lambda \in P^+$, there exists a unique crystal $B(\lambda)$ with a unique highest weight element $u_{\lambda}$ of weight $\lambda$ corresponding to the highest weight irreducible representation $V(\lambda)$~\cite{K90,K91}.

We can construct the tensor product of crystals $\mcB_1, \dotsc, \mcB_L$ as follows.
Let $\mcB = \mcB_L \otimes \cdots \otimes \mcB_1$ be the set $\mcB_L \times \cdots \times \mcB_1$.
We define the crystal operators using the \defn{signature rule}.
Let $b = b_L \otimes \cdots \otimes b_2 \otimes b_1 \in \mcB$, and for $i \in I$, we write
\[
\underbrace{\cm\cdots\cm}_{\varphi_i(b_L)}\
\underbrace{\cp\cdots\cp}_{\varepsilon_i(b_L)}\
\cdots\
\underbrace{\cm\cdots\cm}_{\varphi_i(b_1)}\
\underbrace{\cp\cdots\cp}_{\varepsilon_i(b_1)}\ .
\]
Then by successively deleting any $(\cp\cm)$-pairs (in that order) in the above sequence, we obtain a sequence
\[
\sig_i(b) :=
\underbrace{\cm\cdots\cm}_{\varphi_i(b)}\
\underbrace{\cp\cdots\cp}_{\varepsilon_i(b)}
\]
called the \defn{reduced signature}.
Suppose $1 \leq j_{\cm}, j_{\cp} \leq L$ are such that $b_{j_{\cm}}$ contributes the rightmost $\cm$ in $\sig_i(b)$ and $b_{j_{\cp}}$ contributes the leftmost $\cp$ in $\sig_i(b)$.
Then, we have
\begin{align*}
\te_i b &= b_L \otimes \cdots \otimes b_{j_{\cp}+1} \otimes \te_ib_{j_{\cp}} \otimes b_{j_{\cp}-1} \otimes \cdots \otimes b_1, \\
\tf_i b &= b_L \otimes \cdots \otimes b_{j_{\cm}+1} \otimes \tf_ib_{j_{\cm}} \otimes b_{j_{\cm}-1} \otimes \cdots \otimes b_1.
\end{align*}
If one of the factors in a tensor product is $\zero$, then we consider the entire element to be $\zero$.
For type A, the highest weight condition is the classical Yamanouchi condition (see, \textit{e.g.},~\cite{ECII}).

\begin{remark}
Our tensor product convention follows~\cite{BS17}, which is opposite of the tensor product rule used by Kashiwara~\cite{K90,K91}.
\end{remark}

For two crystals $\mcB_1$ and $\mcB_2$, a \defn{crystal morphism} $\psi \colon \mcB_1 \to \mcB_2$ is a map $\mcB_1 \sqcup \{\zero\} \to \mcB_2 \sqcup \{\zero\}$ with $\psi(\zero) = \zero$ such that the following properties hold for all $b \in \mcB_1$ and $i \in I$:
\begin{itemize}
\item[(1)] If $\psi(b) \in \mcB_2$, then $\wt\bigl(\psi(b)\bigr) = \wt(b)$, $\varepsilon_i\bigl(\psi(b)\bigr) = \varepsilon_i(b)$, and $\varphi_i\bigl(\psi(b)\bigr) = \varphi_i(b)$.
\item[(2)] We have $\psi(\te_i b) = \te_i \psi(b)$ if $\psi(\te_i b) \neq \zero$ and $\te_i \psi(b) \neq \zero$.
\item[(3)] We have $\psi(\tf_i b) = \tf_i \psi(b)$ if $\psi(\tf_i b) \neq \zero$ and $\tf_i \psi(b) \neq \zero$.
\end{itemize}
An \defn{embedding} (resp.~\defn{isomorphism}) is a crystal morphism such that the induced map $\mcB_1 \sqcup \{\zero\} \to \mcB_2 \sqcup \{\zero\}$ is an embedding (resp.~bijection).

Next, we consider types B and D.
Here we recall a specific realization of the crystals for the spinor representations due to Kashiwara and Nakashima~\cite{KN94} that is called the \defn{spinor crystal}.
This is $B(\fw_n)$ in type $B_n$ and $B(\fw_{n-1})$ or $B(\fw_n)$ in type $D_n$, which has an underlying set $\{+, -\}^n$ with the additional condition in type $D_n$ that for $(s_1, \dotsc, s_n) \in B(\fw_k)$ we require $\prod_{i=1}^n s_i = -,+$ if $k = n-1,n$ respectively.
The crystal operators are defined by
\begin{align*}
\te_i(s_1, \dotsc, s_n) & = \begin{cases}
(\dotsc, s_{i-1}, +, -, s_{i+2}, \dotsc) & \text{if $i < n$ and } (s_i, s_{i+1}) = (-, +), \\
(\dotsc, s_{n-1}, +) & \text{if $i = n$, type $B_n$ and } s_n = -, \\
(\dotsc, s_{n-2}, +, +) & \text{if $i = n$, type $D_n$ and } (s_{n-1}, s_n) = (-, -), \\
0 & \text{otherwise},
\end{cases}
\\ \tf_i(s_1, \dotsc, s_n) & = \begin{cases}
(\dotsc, s_{i-1}, -, +, s_{i+2}, \dotsc) & \text{if $i < n$ and } (s_i, s_{i+1}) = (+, -), \\
(\dotsc, s_{n-1}, -) & \text{if $i = n$, type $B_n$ and } s_n = +, \\
(\dotsc, s_{n-2}, -, -) & \text{if $i = n$, type $D_n$ and } (s_{n-1}, s_n) = (+, +), \\
0 & \text{otherwise}.
\end{cases}
\\ \wt(s_1, \dotsc, s_n) & = \frac{1}{2} \left( s_1 \epsilon_1 + s_2 \epsilon_2 + \cdots + s_n \epsilon_n \right),
\end{align*}
We remark that these are distinct from the (reduced) signature described above.
An element $(s_1, \dotsc, s_n)$ will be written as tableaux whose shape is a half-width column of height $n$.
For $B(\fw_{n-1})$ in type $D_n$ we consider the box at height $n$ as being a negative half-width box.
This is consistent with the identification of $P^+$ with partitions, and following English convention for tableaux, the entry in the $i$-th row counted from the top in the tableau is $s_i$.

Some examples of the basic crystals of this paper can be found in Figure~\ref{fig:vec_repr} and Figure~\ref{fig:spinor_repr}.

\begin{figure}
\[
\begin{array}{rl}\toprule
A_n: &
\begin{tikzpicture}[xscale=1.9,baseline=-4]
\node (1) at (0,0) {$\ytableaushort{1}$};
\node (2) at (1.5,0) {$\ytableaushort{2}$};
\node (d) at (3.0,0) {$\ytableaushort{3}$};
\node (n-1) at (4.5,0) {$\cdots$};
\node (n) at (6,0) {$\ytableaushort{n}$};
\draw[->,darkred] (1) to node[above]{\tiny$1$} (2);
\draw[->,UQpurple] (2) to node[above]{\tiny$2$} (d);
\draw[->,brown] (d) to node[above]{\tiny$3$} (n-1);
\draw[->,dgreencolor] (n-1) to node[above]{\tiny$n-1$} (n);
\end{tikzpicture}\\
C_n: &
\begin{tikzpicture}[xscale=1.3,baseline=-4]
\node (1) at (0,0) {$\ytableaushort{1}$};
\node (d1) at (1.8,0) {$\cdots$};
\node (n) at (3.6,0) {$\ytableaushort{n}$};
\node (bn) at (5.4,0) {$\ytableaushort{{\bn}}$};
\node (d2) at (7.2,0) {$\cdots$};
\node (b1) at (9,0) {$\ytableaushort{{\bon}}$};
\draw[->,darkred] (1) to node[above]{\tiny$1$} (d1);
\draw[->,dgreencolor] (d1) to node[above]{\tiny$n-1$} (n);
\draw[->,blue] (n) to node[above]{\tiny$n$} (bn);
\draw[->,dgreencolor] (bn) to node[above]{\tiny$n-1$} (d2);
\draw[->,darkred] (d2) to node[above]{\tiny$1$} (b1);
\end{tikzpicture}
\\\bottomrule
\end{array}
\]
\caption{Crystals of the natural representation $B(\fw_1)$ of types $A_n$ and $C_n$.}
\label{fig:vec_repr}
\end{figure}
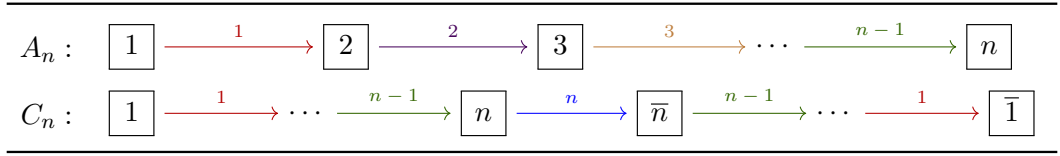

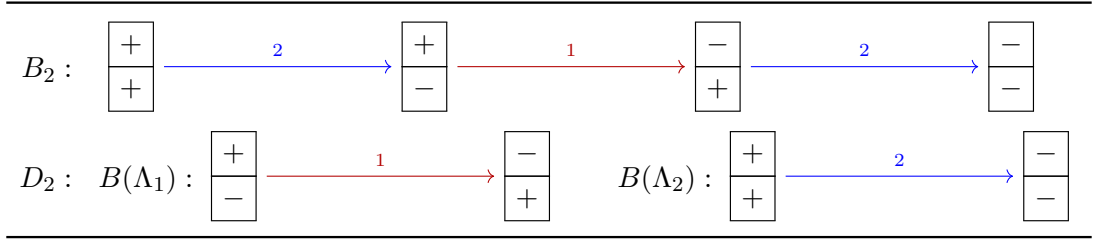
\begin{figure}
\[
\begin{array}{rl}\toprule
B_2: &
\begin{tikzpicture}[xscale=1.3,baseline=-4]
\node (1) at (0,0) {$\ytableaushort{+,+}$};
\node (2) at (3,0) {$\ytableaushort{+,-}$};
\node (3) at (6,0) {$\ytableaushort{-,+}$};
\node (4) at (9,0) {$\ytableaushort{-,-}$};
\draw[->,blue] (1) to node[above]{\tiny$2$} (2);
\draw[->,darkred] (2) to node[above]{\tiny$1$} (3);
\draw[->,blue] (3) to node[above]{\tiny$2$} (4);
\end{tikzpicture}\\
D_2: & B(\fw_1):
\begin{tikzpicture}[xscale=1.3,baseline=-4]
\node (1) at (0,0) {$\ytableaushort{+,-}$};
\node (2) at (3,0) {$\ytableaushort{-,+}$};
\draw[->,darkred] (1) to node[above]{\tiny$1$} (2);
\end{tikzpicture}
\qquad
B(\fw_2):
\begin{tikzpicture}[xscale=1.3,baseline=-4]
\node (1) at (0,0) {$\ytableaushort{+,+}$};
\node (2) at (3,0) {$\ytableaushort{-,-}$};
\draw[->,blue] (1) to node[above]{\tiny$2$} (2);
\end{tikzpicture}
\\\bottomrule
\end{array}
\]
\caption{Crystals of the spinor representations $B(\fw_2)$ for type $B_2$ and $B(\fw_1)$, $B(\fw_2)$ for type $D_2$.}
\label{fig:spinor_repr}
\end{figure}

\subsection{The \texorpdfstring{Lindstr\"om}{Lindstrom}--Gessel--Viennot lemma}

A useful tool for changing combinatorial information into a determinant formula is the \defn{Lindstr\"om--Gessel--Viennot (LGV) Lemma}~\cite{GV85,Lindstrom73}.
Let $\Gamma$ denote an edge-weighted directed graph with weight function $\wt \colon E(\Gamma) \to R$, for some commutative ring $R$.
Let $\uu = (u_1, u_2, \dotsc, u_k)$ and $\vv = (v_1, v_2, \dotsc, v_k)$ be tuples of vertices of $\Gamma$ for some fixed positive integer $k$.
A \defn{family of nonintersecting lattice paths (NILP)} from $\uu$ to $\vv$ is a tuple $(p_1, p_2, \dotsc, p_k)$ of (directed) paths in $\Gamma$, where $p_i$ is a path from $u_i$ to $v_i$ such that no two paths have a common vertex.
Let $N(\uu, \vv)$ denote the set of all NILPs from $\uu$ to $\vv$.
Define the \defn{weight} of a path $p = (\eta_1, \eta_2, \dotsc, \eta_{\ell})$, where $\eta_i \in E(\Gamma)$, and NILP $\pp = (p_1, p_2, \dotsc, p_k)$ to be
\[
\wt(p) = \prod_{i=1}^{\ell} \wt(\eta_i),
\qquad\qquad\qquad
\wt(\pp) = \prod_{i=1}^k \wt(p_i).
\]

\begin{lemma}[{LGV lemma~\cite{GV85,Lindstrom73}}]
We have
\[
\det \left[ \sum_{\pp \in N(u_i, v_j)} \wt(\pp) \right]_{i,j=1}^k = \sum_{\pp \in N(\uu, \vv)} \wt(\pp).
\]
\end{lemma}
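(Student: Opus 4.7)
The plan is to use the classical sign-reversing involution argument of Lindström, Gessel, and Viennot. First I would expand the determinant on the left-hand side via the Leibniz formula: since the $(i,j)$-entry equals $\sum_{p \colon u_i \to v_j} \wt(p)$ (a sum over all single paths, which trivially coincides with the ``NILP'' in the one-path case), we obtain
\[
\det \left[ \sum_{p \colon u_i \to v_j} \wt(p) \right]_{i,j=1}^k = \sum_{\sigma \in S_k} \operatorname{sgn}(\sigma) \sum_{\pp \in P(\uu, \vv_\sigma)} \wt(\pp),
\]
where $\vv_\sigma = (v_{\sigma(1)}, \dotsc, v_{\sigma(k)})$ and $P(\uu, \vv_\sigma)$ denotes all tuples $(p_1, \dotsc, p_k)$ of (possibly intersecting) paths with $p_i$ going from $u_i$ to $v_{\sigma(i)}$. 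The task is then to show that everything on the right cancels except for the nonintersecting contributions, which occur only for $\sigma = \id$ (under the usual compatibility assumption on $\uu, \vv$ implicit in the statement, namely that no NILP connects $\uu$ to any nontrivial permutation of $\vv$).

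Next I would construct the standard involution $\iota$ on the set of intersecting path systems. Given a path system $(\pp, \sigma)$ in which some two paths share a vertex, let $i_0$ be the smallest index for which $p_{i_0}$ meets some other path, and among those meetings pick the first common vertex $x$ on $p_{i_0}$; let $j_0 > i_0$ be the smallest index of a path passing through $x$. Define $\iota(\pp, \sigma)$ by swapping the tails of $p_{i_0}$ and $p_{j_0}$ after $x$. The resulting system has the same multiset of edges, hence the same weight, but the endpoints of $p_{i_0}$ and $p_{j_0}$ are exchanged, so the associated permutation becomes $\sigma \cdot (i_0\ j_0)$, changing the sign. One checks $\iota$ is an involution because the choice of $(i_0, x, j_0)$ is made by the same deterministic rule after swapping.

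Finally, the signed contributions of all intersecting systems cancel in pairs under $\iota$, leaving only the fixed points, which are exactly the NILPs. By the compatibility assumption, any NILP must have $\sigma = \id$, so
\[
\sum_{\sigma \in S_k} \operatorname{sgn}(\sigma) \sum_{\pp \in P(\uu, \vv_\sigma)} \wt(\pp) = \sum_{\pp \in N(\uu, \vv)} \wt(\pp),
\]
which is the desired identity. The only step requiring real care is verifying that $\iota$ is well-defined and involutive: one must argue that the ``first intersection'' is unambiguous (using a fixed total order on vertices of $\Gamma$, or the path-length from $u_{i_0}$), and that swapping tails does not create or destroy earlier intersections, so that $\iota \circ \iota$ returns the original system. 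This is the main obstacle but is handled by choosing the pivot $(i_0, x, j_0)$ in a manner invariant under the tail swap.
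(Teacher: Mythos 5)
Your proof is correct: it is precisely the classical sign-reversing (tail-swapping) involution argument of Lindstr\"om and Gessel--Viennot, and you correctly flag the implicit compatibility hypothesis (no NILP joins $\uu$ to a nontrivial permutation of $\vv$) under which the determinant reduces to the sum over identity-connected NILPs. The paper itself states this lemma without proof, citing the original references, so your argument coincides with the canonical proof those references supply.
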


Two applications of the LGV lemma are used to compute the multiplicity of $V(\lambda)$ inside of $V(\fw_n)^{\otimes 2k}$ in type $B_n$~\cite[Thm.~4.4]{OS19II} and $V^{\otimes k}$, where $V = \bigwedge V(\fw_1)$, in type $C_n$~\cite[Thm.~4.12]{OS19II}.
In both of these constructions, we are working on a square grid with two types of steps
\begin{itemize}
\item $E \colon (i,j) \mapsto (i+1,j)$,
\item $N \colon (i,j) \mapsto (i,j+1)$.
\end{itemize}
The highest weight condition on the corresponding tensor product of crystals is the nonintersecting condition on a family of lattice paths and that the paths stay strictly below the antidiagonal.

\subsection{Tableaux and patterns}
\label{sec:tableaux_patterns}

A \defn{semistandard tableau} of shape $\lambda$ is a filling of the Young diagram of $\lambda$ with positive integers such that rows are weakly increasing and columns are strictly increasing.
It is a classical fact that the set of all semistandard tableaux of shape $\lambda$ with entries in $\{1, \dotsc, n\}$ parameterize a basis for the irreducible highest weight $\gl_n$-representation $V(\lambda)$.
This can be shown by using the branching rule $\gl_n \branch \gl_{n-1}$, which gives rise to Gelfand--Tsetlin (GT) patterns~\cite{GT1950}, which are triangular arrays such that the top row is the partition $\lambda$ and satisfy the local conditions
\[
\begin{array}{ccc}
a && c \\ & b
\end{array}
\qquad\qquad
a \geq b \geq c.
\]
Furthermore, there is a natural crystal structure on semistandard tableaux by reading columns bottom-to-top from left-to-right and applying the signature rule on the reading word realized as a tensor product of $B(\fw_1)$~\cite{KN94}.
The bijection between GT patterns and semistandard tableaux is given by the $i$-th row of the GT pattern is the shape of the tableau restricted to entries at most $i$.

For $\spn_{2n}$, a basis for $V(\lambda)$ is indexed by the set of \defn{King tableaux} of shape $\lambda$~\cite{King76}, a semistandard tableau in the alphabet $\{1 \prec \bon \prec 2 \prec \btw \prec \cdots \prec n \prec \bn\}$ such that smallest entry in the $i$-th row is at least $i$.
When $\lambda$ is a single column (\textit{i.e.}, $\lambda = \fw_h$ for some $h \in I$), then the King tableaux agree with the Kashiwara--Nakashima tableaux~\cite{KN94} (which has a crystal structure from the reading word) by reordering the column, but this does not hold for general shapes.
This was described in terms of branching rules by Proctor~\cite[Thm~4.2]{Proctor94} using a version of GT patterns for $\gl_{2n+1}$ first given by Kirillov~\cite{Kirillov88}.
These GT patterns satisfy the symmetry that when reflected over the middle, we obtain the negative pattern.
So it becomes sufficient to consider only a half GT pattern (and forgetting the middle column forced to be $0$) as given in~\cite[Thm~4.2]{Proctor94}, which we call a \defn{type $C_n$ Proctor pattern}.
We remark that this half pattern description was first given by \v{Z}elobenko~\cite{Zelobenko62}.
Furthermore, we obtain a King tableau from a type $C_n$ Proctor pattern analogous to the $\gl_n$ case.

Next, we look at the analog of GT patterns for $\so_N$, again following~\cite{Proctor94}.
We can index the basis of $V(\lambda)$ by symmetric (in the sense above) $\gl_{N-1}$ patterns except the middle column no longer has to be its own negative.
Hence, we obtain half patterns as before except the rightmost entries now can be positive or negative, but other satisfy the inequalities with respect to their absolute value.
For $N = 2n+1$, that is we are in type $B_n$, these near symmetric GT patterns are in bijection with type $C_n$ Proctor patterns except we can now allow the rightmost entry to be in $\frac{1}{2}\ZZ_{\geq 0}$ by having an entry $a < 0$ going to $-a - \frac{1}{2}$.
We call such a half pattern a \defn{type $B_n$ Proctor pattern}.
These are in bijection with \defn{Sundaram tableaux}~\cite{Sundaram90}, which are King tableaux with an extra symbol $\infty$ that can only appear at most once in any single row.
A half pattern with the sign for $N = 2n$ will be called a \defn{type $D_n$ Proctor pattern}.

Some examples can be found in Figure~\ref{fig:tableau_patterns}, as well as below in the examples of Section~\ref{sec:lozenge}.

\begin{figure}
\[
\begin{array}{rccc}
\toprule
\text{King tableau} &
\ytableaushort{11{\bon},2{\btw}} &
\begin{array}{ccccc}
  3 && 2 \\
  & 3 && 1 \\
  && 3 \\
  &&&  2
\end{array} &
\begin{array}{ccccc@{\;}c@{\;}c@{\;}c@{\;}c@{\;}c}
  3 && 2 && 0 && -2 && -3\\
  & 3 && 1 && -1 && -3\\
  && 3 && 0 && -3\\
  &&&  2 && -2 \\
  &&&& 0
\end{array}
\\
\midrule
\text{Sundaram tableau} &
\ytableaushort{11{\bon},2{\infty}} &
\begin{array}{ccccc}
  3 && 2 \\
  & 3 && \frac{1}{2} \\
  && 3 \\
  &&&  2
\end{array} &
\begin{array}{cccc@{\;}c@{\;}c@{\;}c@{\;}c}
  3 && 2 && -2 && -3\\
  & 3 && -1 && -3\\
  && 3 && -3\\
  &&&  2
\end{array}
\\\bottomrule
\end{array}
\]
\caption{A King (resp.\ Sundaram) tableau, the corresponding type $C_2$ (resp.\ $B_2$) Proctor pattern, and the (near) symmetric GT pattern.}
\label{fig:tableau_patterns}
\end{figure}

\section{Combinatorial skew Howe duality}
\label{sec:combinatorial_duality}

In this section, we will prove a combinatorial version of skew Howe duality.
Recall that this means that we show that the multiplicity of the representation $V(\lambda)$ inside of $V^{\otimes k}$ for some $G_1$-representation $V$ equals the dimension of another representation $V(\mu)$ for some other classical Lie group $G_2$.
Our proofs use combinatorial identities involving crystal bases and NILPs, which we can then express as a determinant.
Therefore, we express our results in terms of the corresponding Lie algebras.
We can then describe this duality in terms of lozenge tilings, where we are taking paths along two different directions.
Additionally, we give a $q$-deformation of the combinatorial skew Howe duality in a number of cases, where we relate a natural $q$-deformation of our formula with the $q$-dimension of $V(\mu)$.

The $q$-dimension of a highest weight irreducible $\g$-representation $V(\lambda)$ is given by
\[
\dim_q V(\lambda) = \dim_q(\lambda) := \prod_{\alpha \in \Phi^+} \frac{1 - q^{\inner{\lambda+\rho}{\alpha^{\vee}}}}{1 - q^{\inner{\rho}{\alpha^{\vee}}}},
\]
where $\Phi^+$ denotes the set of positive roots of $\g$ and $\rho = \sum_{i \in I} \fw_i$ is the Weyl vector.
We can also compute it using the principal grading (see, \textit{e.g.},~\cite[\S10.10]{kac90}) on $\g$ by
\[
\dim_q(\lambda) = \sum_{\uu \in \ZZ_{\geq 0}^{\abs{I}}} q^{\sum_{i \in I} \uu_i} \dim V(\lambda)_{\lambda-\sum_{i\in I} \uu_i \alpha_i}.
\]

\subsection{Multiplicity in type A}
\label{sec:mult_type_A}

We begin with $\g = \gl_n$ with taking the exterior algebra of the natural representation $V = \bigwedge V(\fw_1)$ and compute the multiplicity of $V(\lambda)$ inside of $V^{\otimes k}$.
To obtain the multiplicities for $\fsl_n$, we need to take the projection of the $\ZZ^{n+1}$ ambient space along the vector $(1,1,\dotsc,1)$.

\begin{prop}
\label{prop:mult_det_A}
Let $\g = \gl_n$, and let $V = \bigwedge V(\fw_1)$.
Then the multiplicity of $V(\lambda)$ in $V^{\otimes k}$ is
\begin{equation}
\label{eq:mult_det_A}
\det \left[ \binom{k+i}{k+i-j-\lambda_{n-j}} \right]_{i,j=0}^{n-1} = \det \left[ \binom{k+i}{j+\lambda_{n-j}} \right]_{i,j=0}^{n-1}.
\end{equation}
\end{prop}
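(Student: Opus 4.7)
The plan is to turn the multiplicity into a count of non-intersecting lattice paths (NILPs) and then apply the LGV lemma, following the strategy of~\cite{OS19II}. First, the crystal of $V = \bigwedge \CC^n = \bigoplus_{h=0}^{n} V(\fw_h)$ is the set $\mcB$ of all strictly increasing columns in $\{1,\dotsc,n\}$, equivalently all subsets of $[n]$; thus an element of $\mcB^{\otimes k}$ is recorded as an $n \times k$ matrix $M$ with $\{0,1\}$ entries whose $j$-th column is the subset realising the $j$-th tensor factor. By Kashiwara's theorem, $M^k(\lambda)$ equals the number of such matrices that are $\gl_n$-highest weight of weight $\lambda$, so the remaining task is to count them.

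Applying the tensor-product signature rule to each simple root $\alpha_i$, one checks that the highest weight condition on $M$ is equivalent to the inequalities
\[
N_i(j) := \sum_{t \leq j} M_{it} \;\geq\; N_{i+1}(j), \qquad 1 \leq i \leq n-1, \; 1 \leq j \leq k,
\]
or equivalently $p_{i,t} \leq p_{i+1,t}$ for all admissible $i,t$, where $p_{i,1} < \dotsc < p_{i,\lambda_i}$ are the columns of row $i$ carrying a $1$. I then encode $M$ by a family of $n$ lattice paths on the square grid with $E$ and $N$ steps: for $j = 0, 1, \dotsc, n-1$, the path $P_j$ starts at $\uu_j = (-j, 0)$, performs $j$ initial $N$-steps (the LGV shift), and then reads row $n-j$ of $M$ from left to right, taking $N$ at a $1$-entry and $E$ at a $0$-entry, so that it finishes at $\vv_j = (k - j - \lambda_{n-j},\; j + \lambda_{n-j})$. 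A short coordinate computation shows that $P_j$ and $P_{j+1}$ share a vertex if and only if the inequality $N_{n-j-1}(s) \geq N_{n-j}(s)$ fails at some $s$, and monotonicity of $E/N$ paths extends this to the full family; conversely, in any NILP the first $j$ steps of $P_j$ are forced to be $N$ (else $P_j$ would visit the starting vertex of some $P_{j'}$ with $j' < j$), so the assignment $M \mapsto (P_0, \dotsc, P_{n-1})$ is a bijection between highest weight matrices and NILPs.

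To conclude, the number of unrestricted $E/N$-paths from $\uu_i$ to $\vv_j$ is $\binom{k+i}{j+\lambda_{n-j}}$, since the total step count is $k+i$ with $j+\lambda_{n-j}$ of them $N$-steps. The starts $\uu_i = (-i,0)$ are strictly decreasing in $x$ along the $x$-axis, and the endpoints $\vv_j$ move strictly northwest as $j$ grows (the $y$-coordinate $j+\lambda_{n-j}$ is strictly increasing, and the $x$-coordinate $k-j-\lambda_{n-j}$ is strictly decreasing), so any non-identity permutation $\sigma \in S_n$ pairing $\uu_i$ with $\vv_{\sigma(i)}$ forces a crossing. The LGV lemma then gives $M^k(\lambda) = \det\bigl[\binom{k+i}{j+\lambda_{n-j}}\bigr]_{i,j=0}^{n-1}$, and the other form in~\eqref{eq:mult_det_A} is the symmetry $\binom{m}{r} = \binom{m}{m-r}$ applied entrywise.

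The main obstacle is pinning down the correct shift in the LGV setup, namely the $-j$ offset on the starting $x$-coordinate together with the forced $j$ initial $N$-steps on $P_j$, so that the arithmetic highest weight inequalities translate exactly into geometric non-intersection in both directions. The coordinate bookkeeping for this correspondence, and the verification that non-adjacent paths are automatically non-intersecting once adjacent ones are, are the delicate but essentially routine details.
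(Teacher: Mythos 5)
Your proof is correct and takes essentially the same route as the paper's: both encode a highest-weight element of the $k$-fold tensor power of the crystal of $\bigwedge \CC^n$ by one lattice path per letter (with forced initial steps), identify the Yamanouchi/highest-weight condition with the nonintersecting condition, and apply the LGV lemma to obtain the determinant, your picture being merely the reflection of the paper's (you mark $1$-entries by $N$-steps from starting points on the $x$-axis, the paper marks them by $E$-steps from $s_i=(0,-i)$). The extra verifications you supply (the endpoint ordering ruling out non-identity permutations, and the inductive forcing of the initial steps) are details the paper's proof leaves implicit.
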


\begin{proof}
We show that the multiplicity is equal to the number of NILPs on a square grid with the initial points $s_i = (0, -i)$ and the terminal points at $t_j = (j+\lambda_{n-j}, k - j - \lambda_{n-j})$.
We build a bijection as follows.
Consider the $(m+i)$-th step on the path $p_i$.
If the step is a horizontal step $E$, then there is an $n-i$ appearing in the $m$-th tensor factor from the right.
Thus, a vertical step $N$ does not contribute anything to the $m$-th factor.
It is straightforward to see that the nonintersecting condition corresponds to the highest weight condition.
Hence, the image is a highest weight element, and the inverse map is clear. 
\end{proof}

\begin{ex}
\label{ex:LGV_mult_type_A_bij}
Consider $\gl_5$, $k = 6$, and $\lambda = (5,4,4,2,1)$.
One such lattice path collection and the corresponding highest weight element in $V^{\otimes 6}$ is
\[
\begin{tikzpicture}[baseline=10, scale=0.5]
\draw[gray!40, very thin] (0,-4) grid (10,6);
\draw[very thick, blue, line join=round] (0,0) -- (0,4) -- (1,4) -- (1,5);
\draw[very thick, blue, line join=round] (1,-1) -- (1,3) -- (3,3);
\draw[very thick, blue, line join=round] (2,-2) -- (2,-1) -- (4,-1) -- (4,0) -- (6,0);
\draw[very thick, blue, line join=round] (3,-3) -- (6,-3) -- (6,-2) -- (7,-2) -- (7,-1);
\draw[very thick, blue, line join=round] (4,-4) -- (7,-4) -- (7,-3) -- (9,-3);
\foreach \i in {1,2,3,4}
  \draw[very thick, red] (0, -\i) -- +(\i, 0);
\draw[thick,dashed] (0,0) -- +(4, -4);
\draw[thick,dashed] (0,6) -- +(10, -10);
\foreach \i in {0,1,2,3,4} {
  \draw[fill=red, color=red] (0, -\i) circle (0.12);
  \draw (0, -\i) node[anchor=east] {$s_{\i}$};
}
\foreach \x/\i in {0/1,1/2,2/4,3/4,4/5} {
  \draw[fill=purple, color=purple] (\i+\x, 6-\i-\x) circle (0.12);
  \draw (\i+\x, 6-\i-\x) node[anchor=south west] {$t_{\x}$};
}
\foreach \i in {1,2,3,4}
  \draw[fill=purple, color=purple] (0+\i, 0-\i) circle (0.12);
\end{tikzpicture}
\quad \longmapsto \quad
\ytableaushort{1,3,4} \otimes \ytableaushort{1,2,3,4,5} \otimes \emptyset \otimes \ytableaushort{1,2,3} \otimes \ytableaushort{1,2,3} \otimes \ytableaushort{1,2}.
\]
Indeed, the entry $1$ appears in all factors except the fourth, so the path $s_4 \to t_4$ has its only vertical step as its fourth step in the region contained between the dashed diagonal lines.
\end{ex}

\begin{cor}
\label{cor:mult_flagged_A}
Let $\overline{\lambda}$ denote the complement of $\lambda$ inside of an $n \times k$ rectangle.
The multiplicity of $V(\lambda)$ in $V^{\otimes k}$ is equal to the number of semistandard tableaux of shape $\overline{\lambda}$ flagged by $(f_0, \dotsc, f_{n-1})$, where $f_i = i + 1 + \lambda_{n-i}$.
\end{cor}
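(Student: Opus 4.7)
The plan is to give an explicit bijection between the NILPs exhibited in the proof of Proposition~\ref{prop:mult_det_A} and the flagged semistandard tableaux of shape $\overline{\lambda}$ with flag $(f_0,\dotsc,f_{n-1})$. This is the classical LGV-to-flagged-tableau correspondence (compare Wachs' flagged Jacobi--Trudi identity) specialized to the grid paths used above; once it is set up, the corollary follows by construction from Proposition~\ref{prop:mult_det_A}.

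First I would record the parameters. The path $p_i$ (for $i = 0, \dotsc, n-1$) runs from $(0,-i)$ to $(i+\lambda_{n-i},\,k-i-\lambda_{n-i})$, so it has exactly $k - \lambda_{n-i} = \overline{\lambda}_{i+1}$ north steps and $i + \lambda_{n-i} = f_i - 1$ east steps. To an NILP $\pp = (p_0, \dotsc, p_{n-1})$ I would associate the filling $T(\pp)$ of $\overline{\lambda}$ whose $(i+1)$-st row lists, from left to right, the $x$-coordinates (taken bottom-to-top) of the $N$-steps of $p_i$, each incremented by $1$. Row $i+1$ then has the correct length $\overline{\lambda}_{i+1}$; its entries lie in $\{1,\dotsc,f_i\}$ since the $x$-coordinates range over $\{0,\dotsc,i+\lambda_{n-i}\}$; and the row is weakly increasing because the $x$-coordinates of successive $N$-steps are nondecreasing.

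The main content is verifying that columns of $T(\pp)$ are strictly increasing, which is where the nonintersecting hypothesis enters. Contrapositively, suppose the $c$-th $N$-step of $p_{i+1}$ has $x$-coordinate no larger than that of the $c$-th $N$-step of $p_i$; tracking the horizontal intervals on which $p_i$ and $p_{i+1}$ both sit at the common height $c - 1 - i$ (namely $[x_{i,c-1}, x_{i,c}]$ for $p_i$, between its $(c-1)$-st and $c$-th $N$-steps, and $[x_{i+1,c}, x_{i+1,c+1}]$ for $p_{i+1}$, between its $c$-th and $(c+1)$-st $N$-steps) forces these intervals to overlap, hence a shared lattice vertex, contradicting nonintersection. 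The inverse map, given a flagged SSYT, reconstructs each $p_i$ by placing its $N$-steps at the $x$-coordinates (minus $1$) prescribed by row $i+1$ and filling in $E$-steps to connect them; the strict-column condition then guarantees nonintersection.

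I expect the column-strictness check to be the main technical obstacle, as it requires a careful boundary analysis at $c = 1$ and $c = \overline{\lambda}_{i+2}$ (where the interval for $p_i$ begins at the starting column $x = 0$ or the interval for $p_{i+1}$ extends all the way to its endpoint) to rule out every configuration of coincidence; once set up, however, it is a routine LGV-style bookkeeping exercise, and the remaining claims of the corollary are then immediate.
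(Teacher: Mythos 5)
Your proof is correct and is essentially the paper's: the paper disposes of this corollary by citing the standard bijection between NILPs on a square grid and semistandard tableaux (a rotation of \cite[Thm.~7.16.1]{ECII}), which is precisely the correspondence you construct by recording the shifted $x$-coordinates of the north steps of each path $p_i$ as row $i+1$. The one refinement your contrapositive step needs is to take $c$ \emph{minimal} with $x_{i+1,c}\leq x_{i,c}$: then $x_{i+1,c}\geq x_{i+1,c-1}>x_{i,c-1}$ (with the convention $x_{i,0}=0$ handling $c=1$), so the lattice point $(x_{i+1,c},\,c-1-i)$ lies on both $p_i$ and $p_{i+1}$, whereas for a non-minimal violation the two horizontal runs at height $c-1-i$ can in fact be disjoint, so the single-height overlap claim as you stated it does not close the argument on its own.
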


\begin{proof}
The claim follows from the standard bijection between NILPs in a square grid (\textit{e.g.}, rotated by $\pi/2$ counterclockwise from~\cite[Thm.~7.16.1]{ECII}) and semistandard tableaux.
\end{proof}

\begin{ex}
Consider the NILP from Example~\ref{ex:LGV_mult_type_A_bij}.
Corollary~\ref{cor:mult_flagged_A} yields the semistandard tableau
\[
\ytableaushort{11112,2222,35,78,8}\,,
\]
which satisfies the flagging $(2,4,7,8,10)$.
\end{ex}

\begin{cor}
\label{cor:symmetry_A}
Let $\g = \gl_n$, and let $V = \bigwedge V(\fw_1)$.
Let $\overline{\lambda}$ denote the complement of $\lambda$ inside of an $n \times k$ rectangle.
The multiplicity of $V(\lambda)$ and $V(\overline{\lambda})$ in $V^{\otimes k}$ are equal.
\end{cor}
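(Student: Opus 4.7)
The plan is to derive this symmetry from a single module-level identity rather than by a direct manipulation of the determinants of Proposition~\ref{prop:mult_det_A}. The key observation I would start from is that, as $\gl_n$-modules, one has
\[
V \iso V^* \otimes D, \qquad D := \bigwedge\nolimits^n V(\fw_1) = V(\fw_n),
\]
the determinant representation. To see this, one compares the two sides summand-by-summand in the exterior degree: $\bigwedge\nolimits^i V(\fw_1)$ has highest weight $\fw_i$, while $(\bigwedge\nolimits^{n-i} V(\fw_1))^* \otimes D$ has highest weight $-w_0 \fw_{n-i} + (1^n) = \fw_i$, so the two summands are isomorphic for each $0 \leq i \leq n$.

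Raising to the $k$-th tensor power then yields $V^{\otimes k} \iso (V^{\otimes k})^* \otimes D^k$ (using $(V^*)^{\otimes k} \iso (V^{\otimes k})^*$). Combining this with the adjunction $\operatorname{Hom}(A, B^*) \iso \operatorname{Hom}(B, A^*)$ and the freedom to move the one-dimensional twist $D^k$ across $\operatorname{Hom}$, I would produce the chain of natural isomorphisms
\begin{align*}
\operatorname{Hom}_{\gl_n}\bigl(V(\lambda), V^{\otimes k}\bigr)
&\iso \operatorname{Hom}_{\gl_n}\bigl(V(\lambda) \otimes D^{-k}, (V^{\otimes k})^*\bigr) \\
&\iso \operatorname{Hom}_{\gl_n}\bigl(V^{\otimes k}, V(\lambda)^* \otimes D^k\bigr).
\end{align*}
Since $V(\lambda)^* \otimes D^k$ is again irreducible (as $D^k$ is one-dimensional), this identifies the multiplicity of $V(\lambda)$ in $V^{\otimes k}$ with the multiplicity of $V(\lambda)^* \otimes D^k$ in $V^{\otimes k}$.

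The last step is a highest-weight computation: $V(\lambda)^* \iso V(-w_0 \lambda)$ has highest weight $(-\lambda_n, \dotsc, -\lambda_1)$, and tensoring with $D^k$ shifts by $(k^n)$ to give $(k-\lambda_n, \dotsc, k-\lambda_1) = \overline{\lambda}$, so $V(\lambda)^* \otimes D^k \iso V(\overline{\lambda})$. This yields the claimed equality of multiplicities. I do not expect any serious obstacle, since the whole argument collapses to the elementary $\gl_n$-module isomorphism $V \iso V^* \otimes D$ together with standard tensor--dual manipulations; in particular the proof avoids any direct determinant identity between the two instances of~\eqref{eq:mult_det_A}.
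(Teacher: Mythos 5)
Your proof is correct, but it takes a genuinely different route from the paper's. The paper proves Corollary~\ref{cor:symmetry_A} entirely inside the lattice-path framework of Proposition~\ref{prop:mult_det_A}: interchanging the roles of horizontal and vertical steps (equivalently, reflecting the NILPs across the antidiagonal, or moving the starting points $s_i$ from the left boundary to the bottom boundary) gives a bijection between the NILPs counting the multiplicity of $V(\lambda)$ and those counting the multiplicity of $V(\overline{\lambda})$. Your argument instead works at the level of modules: the isomorphism $V \iso V^* \otimes D$, the twist-and-adjunction manipulation of $\operatorname{Hom}$ spaces (which tacitly, and legitimately, uses complete reducibility of $V^{\otimes k}$ to identify multiplicities with dimensions of $\operatorname{Hom}$ spaces in either direction), and the identification $V(\lambda)^* \otimes D^k \iso V(\overline{\lambda})$; all three steps check out, including the weight computation $-w_0\fw_{n-i} + (1^n) = \fw_i$. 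What your approach buys is conceptual clarity and generality: the symmetry is exposed as a formal consequence of $V$ being self-dual up to a determinant twist, with no appeal to crystals, the LGV lemma, or any choice of combinatorial model, and the same argument applies verbatim to any module with this property. What the paper's approach buys is a weight-compatible bijection rather than just a numerical identity: the step-swapping bijection refines directly to the $q$-analog (the equality $q^{\Abs{\overline{\lambda}}}\dim_q(\overline{\lambda}') = q^{\Abs{\overline{\lambda}}}\dim_q(\lambda')$ appearing in Theorem~\ref{thm:q_mult_dim_A} is exactly the $q$-version of this corollary), whereas dualization does not interact transparently with the principal grading, so your argument would need extra work to recover the graded statement. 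The same reflection idea also recurs later in the paper (for instance in the type $D$ folding), which is why the authors keep the combinatorial proof.
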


\begin{proof}
This follows by interchanging the roles of the horizontal and vertical steps and noting that we get the same set of NILPs if we instead have the starting points be $s_i = (n-i,-n)$ (that is, being along the bottom boundary instead of the left boundary).
\end{proof}

Another determinant formula for the multiplicity was given by Essam and Goodman in~\cite[Eq.~(53)]{EG95} by applying the LGV lemma but instead only considering the portion of the paths that are not fixed.
Indeed, NILPs are precisely the vicious walkers in~\cite{EG95}.
In Example~\ref{ex:LGV_mult_type_A_bij}, this is the portion of the NILP that is between the dashed lines.

We will prove that the natural $q$-analog of our determinant formula gives a product formula of $q$-integers.
We note that the product formula from the determinant is known to experts, and the product as a $q$-dimension can be seen from~\cite[Lemma~3.2]{BKW16} from a straightforward computation.

\begin{thm}
\label{thm:q_mult_dim_A}
Let $\g = \gl_n$, and let $V = \bigwedge V(\fw_1)$.
For a partition $\lambda$ contained in an $n \times k$ rectangle, define
\[
M^A_q(\lambda) = \det \left[ \qbinom{k+i}{j+\lambda_{n-j}}{q} \right]_{i,j=0}^{n-1}.
\]
Let $a_i = \lambda_i + n - i$.
Then we have
\[
M^A_q(\lambda) = q^{\Abs{\overline{\lambda}}} \frac{\displaystyle \prod_{m=0}^{n-1} [k+m]_q! \times \prod_{1 \leq i < j \leq n} [a_i - a_j]_q}{\displaystyle \prod_{i=1}^n [a_i]_q! [k + n - 1 - a_i]_q!}
 = q^{\Abs{\overline{\lambda}}} \dim_q(\overline{\lambda}') = q^{\Abs{\overline{\lambda}}} \dim_q(\lambda') \in \ZZ_{\geq 0}[q],
\]
where $\dim_q(\nu)$ is the $q$-dimension of $V(\nu)$ for $\gl_k$. Moreover, $M_1^A(\lambda)$ is equal to the multiplicity of $V(\lambda)$ in $V^{\otimes k}$.
\end{thm}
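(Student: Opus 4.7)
The theorem has three independent assertions, which I address in sequence.

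First, the last assertion---that $M^A_1(\lambda)$ equals the multiplicity of $V(\lambda)$ in $V^{\otimes k}$---is immediate: setting $q = 1$ in the defining determinant of $M^A_q(\lambda)$ recovers verbatim the determinant of Proposition~\ref{prop:mult_det_A}.

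Second, for the determinant-to-product identity, I plan to follow Krattenthaler's approach~\cite{Krat99}. Pull $[k+i]_q!$ out of row $i$ and $1/[a_{n-j}]_q!$ out of column $j$ of $M^A_q(\lambda)$, reducing to the evaluation of $\det\bigl[1/[k+i-a_{n-j}]_q!\bigr]_{i,j=0}^{n-1}$. Substituting $y_i := q^{k+i}$ and using $[k+i-\ell]_q = (1-q^{-\ell}y_i)/(1-q)$, each matrix entry becomes (up to an explicit $q$-monomial) the reciprocal of a product of factors $(1 - q^{-\ell}y_i)$. After clearing row denominators these entries are polynomials in $y_i$ with explicit roots $1, q, q^2, \ldots, q^{a_{n-j}-1}$; since the degrees $a_{n-j}$ are strictly increasing in $j$, column reductions then collapse the remaining determinant to a $q$-Vandermonde in the $y_i$, producing the factor $\prod_{1\le i<j\le n}[a_i - a_j]_q$. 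Combined with the factorials extracted earlier, this yields the product formula of the theorem. The overall prefactor $q^{\Abs{\overline\lambda}}$ is confirmed by tracking the minimal $q$-degree on both sides.

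Third, for the product-as-$q$-dimension identity, Weyl's formula gives
\[
\dim_q V_{\gl_k}(\lambda') = \prod_{1 \le r < s \le k}\frac{[\mu_r - \mu_s]_q}{[s-r]_q}, \qquad \mu_r := \lambda'_r + k - r,
\]
and the classical $(n,k)$ beta-number decomposition yields $\{a_1, \ldots, a_n\} \sqcup \{\mu_1, \ldots, \mu_k\} = \{0, 1, \ldots, n+k-1\}$. Splitting
\[
\prod_{0 \le u < v \le n+k-1}[v-u]_q = \prod_{m=0}^{k-1}[m]_q! \cdot \prod_{m=0}^{n-1}[k+m]_q!
\]
into pairs with both indices in $\{a_i\}$, both in $\{\mu_r\}$, and one in each, and using the elementary identity
\[
\prod_{r=1}^{k}[|a_i - \mu_r|]_q = [a_i]_q!\,[k+n-1-a_i]_q! \,\big/ \prod_{j \ne i}[|a_i - a_j|]_q
\]
(which follows from $\{|a_i - v| : v \in \{0, \ldots, n+k-1\} \setminus \{a_i\}\} = \{1, \ldots, a_i\} \cup \{1, \ldots, k+n-1-a_i\}$), algebraically rearranges the product into $\dim_q V_{\gl_k}(\lambda')$; this is essentially the computation of~\cite[Lemma~3.2]{BKW16}. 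Finally, $\dim_q V_{\gl_k}(\overline\lambda') = \dim_q V_{\gl_k}(\lambda')$ holds because $\overline\lambda'_j = n - \lambda'_{k+1-j}$ gives $\overline\lambda' - (-w_0\lambda') = (n^k)$, so $V_{\gl_k}(\overline\lambda') \iso V_{\gl_k}(\lambda')^* \otimes \det^n$; in the Weyl-formula convention, $\dim_q$ depends only on differences of weights, and both contragredient duality and determinant twist preserve these differences.

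The main obstacle I anticipate is the careful tracking of $q$-powers in the determinant reduction to confirm the overall $q^{\Abs{\overline\lambda}}$ prefactor; everything else is mechanical algebraic manipulation following~\cite{Krat99} and the classical beta-number identity.
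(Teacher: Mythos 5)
Your first and third steps are sound, but your second step—the determinant evaluation—has a genuine gap, and it is precisely at the point where the paper's proof (due to Krattenthaler) does something different. After you pull $[k+i]_q!$ out of row $i$ and only $1/[a_{n-j}]_q!$ out of column $j$, the cleared entries are polynomials in the \emph{row} variables $y_i = q^{k+i}$ whose degrees $a_{n-j}$ depend on the \emph{column} and are distinct but not consecutive. Such a determinant does not collapse to a Vandermonde under column reductions: in general $\det[p_j(y_i)]$ with $\deg p_j = d_j$ equals $\prod_{i<j}(y_i-y_j)$ times a nontrivial symmetric (Schur-type) factor. Concretely, for $n=2$, $\lambda=(1,0)$ your reduced matrix is, up to scalars,
\begin{equation*}
\begin{pmatrix} 1 & (y_0-1)(y_0-q) \\ 1 & (y_1-1)(y_1-q) \end{pmatrix},
\qquad
\det = (y_1-y_0)\bigl(y_0+y_1-1-q\bigr),
\end{equation*}
which is the Vandermonde times an extra factor. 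Worse, a $q$-Vandermonde in the $y_i=q^{k+i}$ equals $\prod_{i<j}(q^{k+i}-q^{k+j})$ and is independent of $\lambda$, so it cannot ``produce'' $\prod_{i<j}[a_i-a_j]_q$; all the $\lambda$-dependence of the answer lives in the extra factor your reduction discards (in the example above it is $(y_0+y_1-1-q) = (q-1)[2]_q[k]_q$ at $y_0=q^k$, $y_1=q^{k+1}$). The paper avoids this by factoring $1/\bigl([k+n-1-a_j]_q!\,[a_j]_q!\bigr)$—\emph{both} factorials—out of column $j$, so the remaining entries $\prod_{m=i}^{n-1}[k+m-a_j]_q$ are polynomials in the \emph{column} variables $q^{-a_j}$ of \emph{consecutive} row degrees $n-i$; then \cite[Prop.~1]{Krat99} applies and the Vandermonde is taken in the $q^{-a_j}$, which is exactly $\prod_{i<j}[a_i-a_j]_q$ up to a monomial. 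Your setup can be salvaged, but only by invoking the bialternant formula and the principal specialization of Schur functions to evaluate the generalized Vandermonde—which is the missing content, not a mechanical column reduction.

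Your third step is a genuinely different route from the paper's: the paper proves $M^A_q(\lambda)=q^{\Abs{\overline{\lambda}}}\dim_q(\overline{\lambda}')$ bijectively, matching the multiplicity NILPs with Jacobi--Trudi NILPs for tableaux of shape $\overline{\lambda}'$ (replacing vertical steps by diagonal ones) and checking weight preservation, while your Weyl-formula/beta-number computation is the algebraic alternative the paper itself attributes to \cite[Lemma~3.2]{BKW16}; both are valid, and yours is self-contained once the product formula is in hand. One slip, however: with $\mu_r=\lambda'_r+k-r$, the identity $\{a_i\}\sqcup\{\mu_r\}=\{0,1,\dotsc,n+k-1\}$ is false (take $n=k=2$, $\lambda=(1,0)$: both sets equal $\{2,0\}$). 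The complement of $\{a_i\}$ in $\{0,\dotsc,n+k-1\}$ is $\{n+k-1-\mu_r\}$, which is the beta-set of $\overline{\lambda}'$, not of $\lambda'$. Since you separately establish $\dim_q(\lambda')=\dim_q(\overline{\lambda}')$ (that argument is fine), this is repairable by running the splitting against $\overline{\lambda}'$, but as written the identity you split the superfactorial over does not hold.
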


\begin{proof}
We thank Christian Krattenthaler for the following simple proof evaluating the determinant in the first equality.
We first substitute $j \mapsto n-j$.
We factor out $[k+i]_q!$ from the $i$-th row and $1 / ([k+n-1-a_j]_q! [a_j]_q!)$ from the $j$-th column for all $i$ and $j$.
What remains to compute is (after reindexing by $i \mapsto i+1$)
\[
\det\left[ \prod_{m=i}^{n-1} [k+m-a_j]_q \right]_{i,j=1}^n = [a_i - a_j]_q
\]
by~\cite[Prop.~1]{Krat99} with noting that the $(i,j)$-th entry is a polynomial in $q^{-a_j}$ of degree $n - i$.

For the second equality, we use the LGV lemma in two different ways to build a bijection $\phi$, which we then will show is weight preserving up to the shift by $\Abs{\overline{\lambda}}$.
We first note that the determinant is equal to the sum over NILPs as in the proof of Proposition~\ref{prop:mult_det_A}, but we can weight the $m$-th vertical edges from the left by $q^m$ starting with $m = 0$.
Indeed, this gives the $q$-binomial coefficient by using the well-known description of
\[
\qbinom{a}{b}{q} = \sum_{\nu \subseteq a^b} q^{\abs{\nu}}.
\]
To construct a tableau corresponding to a term in $\dim_q(\mu)$, we will construct a NILP using the horizontal steps, but instead of vertical steps, we will use diagonal steps.

The NILP for $\dim_q(\mu)$ is constructed by setting initial points $\overline{s}_i = (-i, i)$ and terminal points $\overline{t}_j = (k - j + \mu_j, j - \mu_j)$, where $1 \leq i,j \leq k$.
Note that since $\mu = \overline{\lambda}'$, the points $\overline{t}_j$ and $t_{j'}$ correspond to all points along the diagonal from $(0,k)$ to $(n+k, -n)$.
For every NILP $\pp$ from $(\mathbf{s}, \mathbf{t})$, we build $\overline{\pp} = (\overline{p}_1, \dotsc, \overline{p}_k)$ from $(\overline{\mathbf{s}}, \overline{\mathbf{t}})$ by having a diagonal step in $\overline{\pp}$ for each vertical step in $\pp$ and connecting the result.
Indeed, the $j$-th diagonal step in $\overline{p}_i$ corresponds to the $i$-th vertical step in $p_{j-1}$, where we take the choice to have the end of the diagonal step be the start of the vertical step (so the top points touch).\footnote{The other choice would be to have the start of the diagonal step matching with the ends of the vertical steps. This would make it so the initial step of each path from $\overline{s}_i \to \overline{t}_i$ would be a horizontal step rather than the last step.}
From the position of the starting points and terminal points, we see that this map $\phi$ is a bijection.
This is the case as the NILP $\overline{\pp}$ is just a semistandard tableaux from the usual LGV lemma proof of the Jacobi--Trudi formula, where the $j$-th diagonal step of $\overline{p}_i$ on the $m$-th diagonal $y = -x + m$ corresponds to the $(i,j)$-th entry being $m$ in a tableau of shape $\mu$.

To show that $\phi$ is weight preserving, we note that $u_{\overline{\lambda}} \mapsto u_{\mu}$ under this bijection given above, which maps the weight $\Abs{\overline{\lambda}}$ to weight $0$.
Next, we note that every time we shift a vertical step right by $1$, we move a diagonal step up by $1$.
Therefore, under these shifts, the bijection is weight preserving (up to the shift by $\Abs{\overline{\lambda}}$).
Since every semistandard tableau in $V(\mu)$ can by obtained from $u_{\mu}$ by a sequence of shifts (which simply changes some $i \mapsto i+1$ in the tableau),  we see this bijection is weight preserving.

We can show the determinant equals $\dim_q(\lambda')$ similarly by instead taking the starting points $\overline{s}_i = (n - 1 + i, -i)$ and terminal points $\overline{t}_j = (j - 1, j - 1 + n - \lambda'_j)$.
The rest of the proof is similar with the $j$-th diagonal step in $\overline{p}_i$ corresponds to the $(j+i)$-th horizontal step in $p_{n-j}$.
\end{proof}

\begin{ex}
\label{ex:LGV_Howe_dual_A}
We consider the NILP from Example~\ref{ex:LGV_mult_type_A_bij}.
We have $\mu = \overline{\lambda}' = (5,4,2,2,1,0)$.
Under the bijection $\phi$ used to prove the second equality in Theorem~\ref{thm:q_mult_dim_A}, we have
\[
\iftikz
\begin{tikzpicture}[baseline=10, scale=0.5]
\draw[gray!40, very thin] (-6,-4) grid (10,6);
\draw[very thick, blue!50, line join=round] (0,0) -- (0,4) -- (1,4) -- (1,5);
\draw[very thick, blue!50, line join=round] (1,-1) -- (1,3) -- (3,3);
\draw[very thick, blue!50, line join=round] (2,-2) -- (2,-1) -- (4,-1) -- (4,0) -- (6,0);
\draw[very thick, blue!50, line join=round] (3,-3) -- (6,-3) -- (6,-2) -- (7,-2) -- (7,-1);
\draw[very thick, blue!50, line join=round] (4,-4) -- (7,-4) -- (7,-3) -- (9,-3);
\foreach \i in {1,2,3,4}
  \draw[very thick, red!50] (0, -\i) -- +(\i, 0);
\draw[thick,dashed] (-6,6) -- +(10, -10);
\draw[thick,dashed] (0,6) -- +(10, -10);
\foreach \i in {0,1,2,3,4} {
  \draw[fill=red, color=red!50] (0, -\i) circle (0.12);
  \draw (0, -\i) node[color=black!50,anchor=north east] {$s_{\i}$};
}
\foreach \x/\i in {0/1,1/2,2/4,3/4,4/5} {
  \draw[fill=purple, color=purple!50] (\i+\x, 6-\i-\x) circle (0.12);
  \draw (\i+\x, 6-\i-\x) node[color=black!50,anchor=south west] {$t_{\x}$};
}
\foreach \i in {1,2,3,4}
  \draw[fill=purple, color=purple!50] (0+\i, 0-\i) circle (0.12);
\draw[very thick, UQpurple, line join=round] (-6,6) -- (0,6);
\draw[very thick, UQpurple, line join=round] (-5,5) -- (0,5) -- (1,4) -- (2,4);
\draw[very thick, UQpurple, line join=round] (-4,4) -- (-1,4) -- (1,2) -- (4,2);
\draw[very thick, UQpurple, line join=round] (-3,3) -- (-1,3) -- (1,1) -- (5,1);
\draw[very thick, UQpurple, line join=round] (-2,2) -- (-1,2) -- (1,0) -- (3,0) -- (4,-1) -- (6,-1) -- (7,-2) -- (8,-2);
\draw[very thick, UQpurple, line join=round] (-1,1) -- (2,-2) -- (5,-2) -- (7,-4) -- (10,-4);
\foreach \i in {1,2,...,6} {
  \draw[fill=red, color=dgreencolor] (-7+\i, 7-\i) circle (0.12);
  \draw (-\i, \i) node[anchor=north east] {$\overline{s}_{\i}$};
}
\foreach \i/\x in {1/10,2/8,3/5,4/4,5/2,6/0} {
  \draw[fill=purple, color=dgreencolor] (\x, 6-\x) circle (0.12);
  \draw (\x, 6-\x) node[anchor=south west] {$\overline{t}_{\i}$};
}
\end{tikzpicture}
\fi
\quad
\longleftrightarrow
\quad
\ytableaushort{11144,2246,33,44,6}\,.
\]
\end{ex}

We can describe the bijection $\phi$ used in the proof of Theorem~\ref{thm:q_mult_dim_A} explicitly in terms of the semistandard tableaux.
For a semistandard tableau $T$, let $\psi(T)$, the entry $m$ in cell $(i, j)$ goes to $m+i-j$ in the cell $(j, i)$.
Clearly $\psi^2$ is the identity map (defined on the set of all semistandard tableaux).
It is a straightforward computation to see that the set given by Corollary~\ref{cor:mult_flagged_A} goes to semistandard tableaux of shape $\overline{\lambda}'$ with the largest entry being $k$.
Furthermore, we have that $\psi$ is $\phi$ translated to semistandard tableaux.

We remark that~\cite[Eq.~(55)]{EG95} is the $q = 1$ version of our product formula for the multiplicity in Theorem~\ref{thm:q_mult_dim_A}.
An alternative proof at $q = 1$ was given in~\cite[Thm.~1]{KGV00}, which could also be extended to the general $q$ case by taking the principal specialization.
Additionally, the equality $q^{\Abs{\overline{\lambda}}} \dim_q(\overline{\lambda}') = q^{\Abs{\overline{\lambda}}} \dim_q(\lambda')$ is the $q$-analog of Corollary~\ref{cor:symmetry_A}.

We describe another connection with a more classical enumeration problem attributed to Verner Hoggatt by Fielder and Alford~\cite{FA89}.
The \defn{$n$-Hoggatt triangle} is the array of integers $(H_{km})_{0 \leq m \leq k}$ given by
\[
H_{km} = \frac{b_n(k)}{b_n(m) b_n(k-m)},
\qquad \text{ where } \qquad
b_n(k) = \prod_{j=1}^k \binom{j+n-1}{n}.
\]
As first proven by Qiaochu Yuan (see~\cite[Sec.~3]{Cigler21}), $H_{km}$ equals the number of semistandard Young tableaux with max entry $k$ with the shape of an $n \times m$ rectangle by the hook-content formula.
By Theorem~\ref{thm:q_mult_dim_A} at $q = 1$, we have the following.

\begin{cor}
\label{cor:Hoggatt_triangle}
Let $\delta = \epsilon_1 + \cdots + \epsilon_n \in P$ for $\gl_n$.
The multiplicity of $V(m\delta)$ in $V^{\otimes k}$ equals $H_{k,k-m}$.
\end{cor}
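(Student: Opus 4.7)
The plan is a direct application of Theorem~\ref{thm:q_mult_dim_A} at $q=1$ combined with the characterization of $H_{km}$ via rectangular tableaux. First, I would identify the partition $\lambda = m\delta$ with the rectangle $(m^n)$ of $n$ rows and $m$ columns, which fits inside the $n \times k$ ambient rectangle as long as $m \leq k$ (otherwise the multiplicity is zero, which I would handle as a trivial edge case). Its complement in this rectangle is $\overline{\lambda} = ((k-m)^n)$, and the conjugate is then $\overline{\lambda}' = (n^{k-m})$.

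Next, I would invoke Theorem~\ref{thm:q_mult_dim_A} at $q = 1$, which identifies the multiplicity of $V(\lambda)$ inside $V^{\otimes k}$ with $\dim V(\overline{\lambda}')$ for $\gl_k$. In our case this dimension is $\dim V((n^{k-m}))_{\gl_k}$, which by the standard Weyl character interpretation equals the number of semistandard Young tableaux of shape $(n^{k-m})$ with entries in $\{1, \dotsc, k\}$.

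Finally, I would verify that Yuan's identification of $H_{km}$ (stated in the paragraph preceding the corollary) is precisely the number of semistandard Young tableaux of shape $(n^{m})$ with max entry $k$, so that applying it with $m$ replaced by $k-m$ yields exactly $H_{k,k-m}$. A quick sanity check in small cases (e.g.\ $n=1$, where $H_{km} = \binom{k}{m}$ counts SSYT of column shape $(1^m)$) ensures the convention for the ``$n \times m$ rectangle'' used by Fielder–Alford matches the shape $(n^m)$ appearing from our conjugation.

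There is essentially no main obstacle beyond bookkeeping of the rectangle conventions (rows versus columns versus conjugation), since the identity is a clean consequence of Theorem~\ref{thm:q_mult_dim_A} together with Yuan's hook–content interpretation of $H_{km}$. As a sanity check, one can alternatively apply Corollary~\ref{cor:symmetry_A} to replace $\lambda$ by $\overline{\lambda} = ((k-m)^n)$ before taking the conjugate, which gives the complementary shape $(n^m)$ and yields $H_{k,m} = H_{k,k-m}$ directly from the symmetry $H_{km} = H_{k,k-m}$ built into the definition of the Hoggatt triangle.
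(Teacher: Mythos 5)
Your proposal is correct and follows exactly the paper's route: the corollary is stated there as an immediate consequence of Theorem~\ref{thm:q_mult_dim_A} at $q=1$, identifying the multiplicity of $V(m\delta)$ with $\dim V_{\gl_k}\bigl(\overline{\lambda}'\bigr) = \dim V_{\gl_k}(n^{k-m})$ and invoking Yuan's tableau interpretation of $H_{km}$. Your bookkeeping of the complement and conjugate shapes, and the $n=1$ sanity check fixing the rectangle convention, match the paper's own conventions (note the paper itself writes $H_{km}(q) = \dim_q(n^m)$ for $\gl_k$), so there is nothing missing.
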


Corollary~\ref{cor:symmetry_A} then yields the symmetry $H_{km} = H_{k,k-m}$~\cite[Eq.~(3)]{Cigler21}.
Furthermore, Theorem~\ref{thm:q_mult_dim_A} gives a natural $q$-analog of the Hoggatt triangles, along with determinant formulas for the entries and a connection with representation theory from another perspective.
That is, define the $q$-analog of the $n$-Hoggatt triangle by
\[
H_{km}(q) := \dim_q\bigl( \overline{(k-m)\delta}' \bigr) = \dim_q(n^m)
\]
for $\gl_k$.
Consequently, from this perspective, Theorem~\ref{thm:q_mult_dim_A} was recently proven independently in the case $\lambda = m\delta$ by Johann Cigler~\cite[Thm.~8]{Cigler21}.
For alternative proof, we can manipulate~\cite[Eq.~(3.13)]{BKW16} for $\lambda = m \delta$ to obtain the $q$-analog of Hoggatt's triangle given by~\cite[Eq.~(22)]{Cigler21}.

Next we consider the projection to $\fsl_n$, where all of the weights $m\delta \mapsto 0$.
Here, the multiplicity of $V(0)$ equals the \defn{$n$-Hoggatt sums}: the rows sums of the $n$-Hoggatt triangle.
Alternatively, these are the diagonals of the generalized Catalan number triangle as described in \oeis{A116925}~\cite{OEIS}.
In particular, the case of $n=3$ yields a correspondence with Baxter permutations.
The multiplicities for $\fsl_n$ can also be described by the (generalized) hypergeometric series evaluation $_nF_{n-1}(-n+1-k, \dotsc, -k; 2, \dotsc, n; (-1)^n)$.
For the $q$-analog, it is equal to
\[
_n\phi_{n-1}\left[ \begin{array}{l@{\;\;}l@{\;\;}l@{\;\;}l@{}} q^{-n+1-k} & \cdots & q^{-k-1} & q^{-k} \\ q^2 & \cdots & q^n \end{array}; q, (-1)^n \right]
= {}_2\phi_1 \left[ \begin{array}{l@{\;\;}l@{}} q^n & q^{-(n+k-1)} \\ q \end{array}; q^n, (-1)^n \right].
\]
We thank Ole Warnaar for the simplification to using $_2\phi_1$.

\subsection{Multiplicity in types BC}
\label{sec:mult_type_BC}

In this section, we consider the power of the spinor representation $V(\fw_n)^{\otimes K}$ for $\SO_{2n+1}$.
We give a determinant formula and closed product formula for the $q$-analog of the multiplicity of $V(\lambda)$ similar to the case for $\GL_n$ in the previous section.
When $K = 2k+1$, this also equals the multiplicity of $V(\lambda)$ inside of $V^{\otimes k}$ for $\Sp_{2n}$, where $V = \bigwedge V(\fw_1)$.
We recover the formula from~\cite[Eq.~(15)]{NNP20} and unify~\cite[Thm.~4.4, Thm.~4.12]{OS19II}.

We start by considering the natural $q$-analog of the determinant formula from~\cite[Thm.~4.4]{OS19II} for the multiplicity of $V(\lambda + p \Lambda_n)$ inside of $V(\fw_n)^{\otimes 2k+p}$, where $p = 0,1$, in type $B_n$:
\begin{equation}
\label{eq:q_mult_det_B}
M^{BC}_q(\lambda + p\Lambda_n) := \det \left[ \mathcal{C}_{a(i,j), b(i,j)}(q) \right]_{i,j=1}^n,
\end{equation}
where
\[
a(i,j) = 2n - i - j + k + p + \lambda_j,
\qquad\qquad
b(i,j) = j - i + k - \lambda_j.
\]
We remark that the $p = 1$ is a straightforward extension of the $p = 0$ case.

\begin{thm}
\label{thm:q_mult_dim_B}
Let $\lambda$ be a partition inside an $n \times k$ rectangle.
\begin{itemize}
\item[$p=0$:]
We have
\[
M^{BC}_q(\lambda) \prod_{a=1}^{k-1} (q^a + 1) = q^{\Abs{\overline{\lambda}}} \dim_q(\overline{\lambda}' + \omega_k),
\]
where $\omega_k = \frac{1}{2}(\epsilon_1 + \cdots + \epsilon_k)$ for type $D_k$ and $\dim_q(\overline{\lambda}' + \omega_k)$ is the $q$-dimension of $V(\overline{\lambda}' + \omega_k)$ in type $D_k$.
Furthermore, $M^{BC}_1(\lambda)$ equals the multiplicity of $V(\lambda)$ in $V(\fw_n)^{\otimes 2k}$ for type $B_n$ and $M^{BC}_q(\lambda) \in \ZZ_{\geq 0}[q]$.

\item[$p=1$:]
We have
\[
M^{BC}_q(\lambda + \fw_n) = q^{\Abs{\overline{\lambda}}} \dim_q(\overline{\lambda}') \in \ZZ_{\geq 0}[q],
\]
where $\dim_q(\overline{\lambda}')$ is the $q$-dimension of $V(\overline{\lambda}')$ in type $C_k$.
Furthermore, $M^{BC}_1(\lambda + \Lambda_n)$ equals the multiplicity of $V(\lambda + \Lambda_n)$ in $V(\fw_n)^{\otimes 2k+1}$ for type $B_n$ and $V(\lambda)$ in $V^{\otimes k}$ for $V = \bigwedge V(\fw_1)$ in type~$C_n$.
\end{itemize}
\end{thm}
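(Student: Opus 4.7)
The plan is to mirror the three-step strategy used for Theorem~\ref{thm:q_mult_dim_A}: interpret $M^{BC}_q(\lambda)$ as a $q$-weighted NILP count, evaluate the resulting determinant in closed product form via Krattenthaler-type techniques~\cite{Krat99}, and identify that product with the Weyl $q$-dimension formula for the corresponding $G_2$-representation. That $M^{BC}_1(\lambda)$ equals the multiplicity of $V(\lambda)$ in $V(\fw_n)^{\otimes 2k}$ for type $B_n$ is~\cite[Thm.~4.4]{OS19II}, where the crystal signature rule for spinors is encoded as nonintersecting $\pm$-walks in a triangular region; this argument adapts essentially verbatim to the $p = 1$ case by inserting one extra spinor tensor factor, which translates the terminal points by $\fw_n$ and shifts the index $a(i,j)$ by $1$. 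The type $C_n$ identification for $p = 1$ uses~\cite[Thm.~4.12]{OS19II}, whose NILP model in a half-hexagon coincides with the one appearing here, and the natural $q$-weighting by column index promotes the binomial/Catalan-triangle sums in the LGV entries to their $q$-analogs $\mathcal{C}_{a,b}(q)$.

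To evaluate the determinant, I would rewrite $\mathcal{C}_{a,b}(q) = [a-b+1]_q [a+b]_q! / ([a+1]_q! [b]_q!)$, extract row- and column-dependent factorials, and be left with a determinant in the variables $\mu_j := \lambda_j + n - j$ whose $(i,j)$-entry is a polynomial in $[\mu_j]_q$ and $[c - \mu_j]_q$ (for constants $c$ depending on $k,n,p$). This is the pattern handled by the symplectic/orthogonal Vandermonde evaluation of~\cite[Prop.~2]{Krat99}, which produces
\[
\prod_{1 \le i < j \le n} [\mu_i - \mu_j]_q \, [\mu_i + \mu_j + c']_q
\]
together with possible single-variable factors $[\mu_i]_q$ or $[\mu_i + c'']_q$ whose presence distinguishes the $p = 0$ and $p = 1$ cases. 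Collecting the extracted factorials with this product gives a closed product formula for $M^{BC}_q(\lambda)$ whose factors are indexed (after the substitution $\mu_j \leftrightarrow \overline{\lambda}'_i + k - i$ exchanging the $n$- and $k$-indexings) by the positive roots of a type $C_k$ or $D_k$ root system.

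To match with the $q$-dimension side, observe that after this substitution the product reproduces exactly $\prod_{\alpha \in \Phi^+} [\inner{\mu + \rho}{\alpha^{\vee}}]_q / [\inner{\rho}{\alpha^{\vee}}]_q$ for $G_2 = \Sp_{2k}$ at weight $\overline{\lambda}'$ when $p = 1$, and for $G_2 = \SO_{2k}$ at the spin-shifted weight $\overline{\lambda}' + \omega_k$ when $p = 0$. In the $D_k$ case the short roots are absent from $\Phi^+$, so the determinant produces a product that is ``too small'' by exactly the principal specialization of $\dim V(\omega_k) = 2^{k-1}$, which equals $\prod_{a=1}^{k-1}(q^a + 1)$; this is precisely the extra factor placed on the LHS. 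The overall $q$-shift by $q^{\Abs{\overline{\lambda}}}$ arises, as in the type A proof of Theorem~\ref{thm:q_mult_dim_A}, from comparing the NILP encoding the highest weight element on the multiplicity side with the one encoding the highest weight element on the $q$-dimension side (vertical versus diagonal steps), whose difference in weight is $\Abs{\overline{\lambda}}$. Positivity $M^{BC}_q(\lambda) \in \ZZ_{\ge 0}[q]$ then follows directly from the NILP model.

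The main obstacle is step~2: the determinant is a genuine ``$BC$-type'' Vandermonde rather than a plain one, mixing the factors $[\mu_i - \mu_j]_q$ and $[\mu_i + \mu_j + c']_q$, so selecting the correct form of Krattenthaler's evaluation and tracking the single-variable factors is delicate, especially as the diagonal entries $\mathcal{C}_{a(i,i),b(i,i)}(q)$ acquire extra cancellations when $p$ and $k$ have opposite parities. A secondary subtlety is pinning down the factor $\prod_{a=1}^{k-1}(q^a + 1)$ in the $D_k$ case exactly on the nose, which requires careful bookkeeping of the spin shift $\omega_k$ on both the determinant and the Weyl-dimension sides; once this is done, the combinatorial skew Howe duality at $q = 1$ falls out as an immediate corollary, giving the tensor-power multiplicity interpretations asserted in the theorem.
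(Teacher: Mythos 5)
Your strategy is sound, but it is not the paper's: the paper proves Theorem~\ref{thm:q_mult_dim_B} without ever evaluating the determinant. For $p=0$ it composes the NILP/King-tableaux bijection of~\cite[Thm.~3.8, Thm.~4.4]{OS19II} with Proctor's combinatorial description~\cite[Thm.~8.1]{Proctor94} of the type $D_k$ character of $V(\overline{\lambda}'+\omega_k)$ as pairs consisting of a King tableau and a $\pm$-vector of length $k-1$; the $\pm$-vector contributes precisely $\dim_q(\omega_k)=\prod_{a=1}^{k-1}(q^a+1)$, and all that remains is to check that the bijection is weight-preserving up to the shift $q^{\Abs{\overline{\lambda}}}$. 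For $p=1$ the paper's argument is even shorter: forcing the rightmost tensor factor to be $u_{\fw_n}$ identifies the multiplicity NILPs with exactly the lattice paths that compute $\dim_q(\overline{\lambda}')$ in type $C_k$ via~\cite[Thm.~3.8]{OS19II}, so both sides count the same $q$-weighted objects and nothing needs to be evaluated. Your route --- $q$-NILP determinant, then a $BC$-type Krattenthaler evaluation, then matching against the Weyl $q$-dimension formula --- is what the paper does only later and separately (Theorem~\ref{thm:q_prod_B_even}, proved by Dodgson condensation, with~\cite[Thm.~27]{Krat99} noted as an alternative, and the identification of that product as a $q$-dimension coming from~\cite[Lemma~3.3, Lemma~3.5]{BKW16}), so there is no fatal gap: both of your nontrivial steps are known to go through. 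What your route buys is the explicit product formula as a byproduct; its cost is that everything you flag as ``delicate'' genuinely is: the spin factor cannot be read off from the heuristic that the $D_k$ root system ``lacks'' factors, but must be verified by the conjugate-shape reindexing from the $n$ variables $a_i$ to $k$ variables together with careful tracking of the single-variable factors $[2a_i]_q$. One internal inconsistency to repair: in your algebraic route the shift $q^{\Abs{\overline{\lambda}}}$ must fall out of the determinant evaluation itself (as it does in Theorem~\ref{thm:q_prod_B_even}), not from the comparison of two NILP families that you import from the type A proof --- that bijective argument belongs to the paper's approach, not to yours.
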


\begin{proof}
We first prove the $p = 0$ case.
The multiplicity claim was proven in~\cite[Thm.~4.4]{OS19II}.
From~\cite[Thm.~3.8, Thm.~4.4]{OS19II}, we have a bijection between the NILPs and King tableaux.
In~\cite[Thm.~8.1]{Proctor94}, the character of $V(\overline{\lambda}' + \omega_k)$ is given by a pair of a King tableau and a $\pm$-vector of length $k-1$.
When we look at the $q$-dimension, this $\pm$-vector contributes a factor of
\begin{equation}
\label{eq:q_dim_spinor}
\dim_q(\omega_k) = \prod_{a=1}^{k-1} (q^a + 1).
\end{equation}
Hence it remains to show that the bijection between NILPs and King tableaux is weight preserving up to a shift by $q^{\Abs{\overline{\lambda}}}$.
This follows from noting that the $q$-dimension for the King tableau is formula~\cite[Thm.~3.8]{OS19II} with a slight modification to compute the $q$-dimension by the $i$-th diagonal having weight $q^i$ and the shift by $q^{\Abs{\overline{\lambda}}}$.

Now we consider the case when $p = 1$.
As previously mentioned, the multiplicity claim for type $B_n$ is a straightforward extension of~\cite[Thm.~4.4]{OS19II}, and type $C_n$ is proven in~\cite[Thm.~4.12]{OS19II}.
Since the rightmost tensor factor has to be $u_{\fw_n}$, the NILPs for the multiplicities in $V(\fw_n)^{\otimes 2k+1}$ are exactly those used to compute the multiplicities in type $C_n$.
Moreover, we note that these are precisely the same NILPs used to compute the dimension of $V(\overline{\lambda}')$ of type $C_k$ by~\cite[Thm.~3.8]{OS19II}.
This also holds for the natural $q$-deformation and the result follows.
\end{proof}

We note that Equation~\eqref{eq:q_dim_spinor} is almost the $q$-analog of $2^k$ that comes from the factor of $2$ difference in dimension between $\bigwedge V(\fw_1)$ and $V(\fw_n) \otimes V(\fw_n)$.
The extra factor of $2$, which would become the $a = 0$ factor in Equation~\eqref{eq:q_dim_spinor}, comes from the order $2$ symmetry of type $D_n$, which replaces the coefficient of $\epsilon_k \leftrightarrow -\epsilon_k$.
This can also be seen as coming from the fact we are using $SO_{2n}$ rather than $\Or_{2n}$ to describe the crystals.
Hence, this is the $q$-combinatorial version of the Howe duality of $\left( \bigwedge V(\fw_1) \right)^{\otimes k}$ with $\SO_{2k}$.

For the next proof, we use the Dodgson condensation method that is based off the Desnanot--Jacobi identity (see~\cite[Sec.~2.3]{Krat99}) to give proof using induction on $n$.
We note that removing the initial vertex $u_n$ increases $k$ by $1$ and removing the terminal vertex $v_n$ increases $\lambda_i$ by $1$ for all $i$, yet the values $\{a_i\}_{i=1}^{n+1}$ do not change in each of the minors.
The base case of $n = 1$ is trivial.
We leave the details to the reader.

\begin{thm}
\label{thm:q_prod_B_even}
Fix positive integers $k$ and $n$.
Let $\lambda$ be a partition contained inside of an $n \times k$ rectangle.
Let $a_i = \lambda_i + (n-i) + \frac{p+1}{2}$.
Then we have
\[
M^{BC}_q(\lambda + p \fw_n) = q^{\Abs{\overline{\lambda}}} \frac{\displaystyle \prod_{i=1}^n [2k+p+2i-2]_q! \, [2a_i]_q \times \prod_{1 \leq i < j \leq n} [a_i - a_j]_q [a_i + a_j]_q}{\displaystyle  \prod_{i=1}^n \left[k+n-a_i + \frac{p-1}{2} \right]_q! \left[ k+n+a_i + \frac{p-1}{2} \right]_q!}.
\]
\end{thm}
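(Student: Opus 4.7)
\medskip

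\noindent\textbf{Proof proposal.} The plan is to follow the hint and prove the product formula by induction on $n$ via Dodgson condensation, using the Desnanot--Jacobi identity
\[
\det(M)\cdot \det\bigl(M_{1,n}^{1,n}\bigr) \;=\; \det\bigl(M_{1}^{1}\bigr)\det\bigl(M_{n}^{n}\bigr) \;-\; \det\bigl(M_{1}^{n}\bigr)\det\bigl(M_{n}^{1}\bigr),
\]
where $M_{I}^{J}$ denotes the minor obtained by deleting the rows indexed by $I$ and the columns indexed by $J$ from the matrix $M$ whose $(i,j)$-entry is $\mathcal{C}_{(a(i,j),b(i,j))}(q)$. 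The base case $n=1$ is the routine check that $\mathcal{C}_{(k+p+\lambda_1,\,k-\lambda_1)}(q)$ agrees with the product on the right-hand side for $a_1 = \lambda_1 + \frac{p-1}{2} + 1$, which follows directly from the definition $\mathcal{C}_{n,k}(q) = \frac{[n-k+1]_q}{[n+1]_q}\qbinom{n+k}{k}{q}$.

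The induction step proceeds by identifying each of the five minors in the Desnanot--Jacobi identity with $M^{BC}_q$ for rank $n-1$ (or rank $n-2$ for the central minor) with appropriately shifted parameters $(k,p,\lambda)$, in such a way that the values $\{a_i\}$ are preserved. The key numerological observations are the ones highlighted in the hint: deleting the index corresponding to vertex $u_n$ (a row) is equivalent to incrementing $k$ by $1$, while deleting the index corresponding to vertex $v_n$ (a column) is equivalent to incrementing every $\lambda_i$ by $1$, so that $a_i = \lambda_i + (n-i) + \frac{p+1}{2}$ stays invariant when $n$ is simultaneously decreased by $1$. Concretely:
\[
M_{1}^{1} \leftrightarrow M^{BC}_q\bigl(\lambda'' + p\fw_{n-1}\bigr),\quad
M_{n}^{n} \leftrightarrow M^{BC}_q\bigl(\lambda^{(1)} + p\fw_{n-1}\bigr),\quad
M_{1,n}^{1,n} \leftrightarrow M^{BC}_q\bigl(\lambda^{(2)} + p\fw_{n-2}\bigr),
\]
and the two mixed minors $M_1^n$ and $M_n^1$ are identified similarly (with the corresponding shifts of $k$ and of $\lambda$). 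After substituting the inductive product formula into each side of the Desnanot--Jacobi identity and cancelling the common denominators and the $q^{\Abs{\overline{\lambda}}}$ prefactors, the identity to verify collapses to a single relation among $q$-factorials and the linear factors $[a_i \pm a_j]_q$, $[2a_i]_q$ indexed by $1\le i<j\le n$.

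The main obstacle will be this last algebraic verification: organizing the cross-cancellations so that the remaining identity is a balanced three-term relation in $[2k+p+2n-2]_q!$-ratios and in the endpoint factors $[k+n\pm a_1 + \tfrac{p-1}{2}]_q!$, $[k+n\pm a_n + \tfrac{p-1}{2}]_q!$. I would carry out this reduction separately for the two cases $p=0$ and $p=1$, since the parity affects the boundary behaviour of the Catalan-triangle entries in the first and last rows/columns, but I expect the same normalizations to handle both. After the bookkeeping, the surviving identity is a (symmetric-function-theoretically standard) three-term Pl\"ucker-type relation among products of $q$-integers, which one can verify by a short direct manipulation. Combining this with the base case $n=1$ completes the induction.
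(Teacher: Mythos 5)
Your proposal is correct and takes essentially the same route as the paper: the authors' own proof is precisely this Dodgson condensation argument via the Desnanot--Jacobi identity, inducting on $n$ with the same key observation that deleting the vertex $u_n$ shifts $k \mapsto k+1$ and deleting $v_n$ shifts every $\lambda_i \mapsto \lambda_i + 1$ while leaving the $a_i$ unchanged, with the trivial base case $n=1$ and the remaining bookkeeping left to the reader. Your write-up in fact spells out more of the minor identifications than the paper does.
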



Theorem~\ref{thm:q_prod_B_even} for $q = 1$ was proven using different techniques in~\cite[Thm.~6]{KGV00}, where our $a_i$ is their $e_i + 1$.
Their proof can be modified for the general $q$ case by taking the principal specialization.
The product formula for the $q$-dimension is also given by~\cite[Lemma~3.3, Lemma~3.5]{BKW16}.

\subsection{Multiplicity in type D}
\label{sec:mult_type_D}

Now we consider the case for representations of $\SO_{2n}$.
Let $V = V(\fw_{n-1}) \oplus V(\fw_n)$.
The goal of this section is to compute the multiplicity of $V(\lambda)$ inside of $V^{\otimes K}$ as a determinant and a product formula for the natural $q$-analog.
We continue our approach of giving a determinant formula via crystal bases and the LGV lemma, but a little more care is needed because of the two types of spinors involved.
We relate our formula when $K = 2k + 1$ to a $q$-dimension and conjecture that it is a $q$-dimension up to a simple ratio for $K = 2k$, which is precisely the dimension of a representation when $q = 1$.

For an element $(s_1, \dotsc, s_n)$ in a spinor crystal in type $D_n$, we note that the last sign $s_n$ is uniquely determined by the product of the first $n-1$ signs $s_1 \dotsm s_{n-1}$.
However, since $V$ is the direct sum of both spinors, we can freely choose $s_n$, which uniquely determines which of the two summands the element belongs to.
Therefore, we can use the same identification as in type $B_n$ to identify elements in $V^{\otimes 2k}$ with lattice paths in a square grid.
However, for the highest weight condition, we are not allowed to freely choose the sign for $s_n$ nor is it as simple as keeping the paths below the antidiagonal.
We still want the nonintersecting condition to be the translation of the highest weight condition for all $i < n - 1$, so we can restrict ourselves to the rank $2$ case with $i = n-1, n$.

In this case, we have four elements for $(s_{n-1}, s_n)$
\[
  \ytableausetup{boxsize=1.5em}
\ytableaushort{+,+}\,,
\qquad\qquad
\ytableaushort{+,-}\,,
\qquad\qquad
\ytableaushort{-,+}\,,
\qquad\qquad
\ytableaushort{-,-}\,.
\]
The first and second cases are highest weight elements, which pair with the fourth and third cases respectively.
Hence, we no longer require the path $p_n$ to stay strictly below the antidiagonal, but there is some influence from $p_{n-1}$.
We fix the path $p_{n-1}$ and we then mirror the path from $u_{n-1}$ across the antidiagonal; \textit{i.e.}, we swap $N \leftrightarrow E$ steps.

\begin{lemma}
\label{lemma:rank2_case}
Let $B$ be the corresponding crystal of $V$ in type $D_2$.
All of the highest weight elements in $B^{\otimes 2k}$ of weight $\lambda = c_{n-1} \epsilon_{n-1} + c_n \epsilon_n \in P^+$ are in bijection with NILPs $(p_{n-1}, p_n)$ in the grid with
\begin{itemize}
\item starting vertices $u_n = (0,0)$ and $u_{n-1} = (-1,-1)$,
\item terminal vertices $v_n = (k+c_n, k-c_n)$ and $v_{n-1} = (k+1+c_{n-1}, k-1-c_{n-1})$, and
\item $p_n$ does not intersect $p_{n-1}$ reflected across the antidiagonal.
\end{itemize}
\end{lemma}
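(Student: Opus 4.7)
The strategy is to construct an explicit bijection between highest weight elements in $B^{\otimes 2k}$ of weight $\lambda = c_{n-1}\epsilon_{n-1} + c_n\epsilon_n$ and pairs of lattice paths satisfying the stated conditions, by interpreting the signature rules for $\te_{n-1}$ and $\te_n$ as two separate geometric conditions on the paths. First, I encode a tensor product $b = b_{2k} \otimes \cdots \otimes b_1$ with $b_j = (s_{n-1}^{(j)}, s_n^{(j)}) \in \{+,-\}^2$ by two paths: path $p_n$ begins at $u_n = (0,0)$ and at step $j$ takes an east step if $s_n^{(j)} = +$ and a north step if $s_n^{(j)} = -$, while $p_{n-1}$ begins at $u_{n-1} = (-1,-1)$ with two forced east boundary steps surrounding its $2k$ interior steps recording $s_{n-1}^{(j)}$. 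The endpoints $v_n$ and $v_{n-1}$ follow at once from the counts $\#\{j : s_i^{(j)} = +\} = k + c_i$ and $\#\{j : s_i^{(j)} = -\} = k - c_i$ together with the two boundary east steps on $p_{n-1}$, accounting for the two-unit horizontal shift between $u_{n-1}$ and $v_{n-1}$.

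Since $D_2 \iso A_1 \times A_1$, the operators $\te_{n-1}$ and $\te_n$ act on disjoint summands of $B = B(\fw_{n-1}) \oplus B(\fw_n)$, so the highest weight condition decouples into $\te_{n-1}b = \zero$ and $\te_n b = \zero$ independently. For $\te_{n-1}$ the signature rule attaches $\cp$ to each tensor factor with $(s_{n-1}, s_n) = (-, +)$ (at which $p_{n-1}$ goes north while $p_n$ goes east) and $\cm$ to each factor with $(+, -)$; the matching condition that every $\cp$ cancels against a later $\cm$ translates, under the offset starting points and boundary shifts, precisely into the statement that $p_{n-1}$ and $p_n$ share no vertex, i.e., the NILP condition. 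For $\te_n$ the signature rule instead attaches $\cp$ to $(-, -)$ (both paths going north) and $\cm$ to $(+, +)$ (both paths going east); reflecting $p_{n-1}$ across the antidiagonal interchanges its east and north steps, so these configurations become ``reflected $p_{n-1}$ east and $p_n$ north'' and ``reflected $p_{n-1}$ north and $p_n$ east'' respectively, which is exactly the configuration type captured by the standard non-intersection argument applied to the pair $(p_n, \text{reflected } p_{n-1})$. Combining the two conditions gives the claimed bijection.

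The main obstacle is verifying that the reflection across the antidiagonal faithfully captures the $\te_n$-signature rule. Specifically, one must check that (i) the left-to-right ordering of the signature induced by the tensor factor labeling matches the ordering of potential crossings between $p_n$ and reflected $p_{n-1}$; (ii) the boundary east steps on $p_{n-1}$ are placed so that the forced segments of the path contribute neither extraneous $\cp$'s nor $\cm$'s to either signature; and (iii) the two geometric conditions coexist consistently, which uses the fact that $\te_{n-1}$ and $\te_n$ involve disjoint pairs of sign configurations so the cancellation arguments do not interfere. Once these synchronization details are verified, the bijection restricts as desired and the lemma follows.
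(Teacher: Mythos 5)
Your high-level strategy is the same as the paper's: encode the $2k$ tensor factors as simultaneous steps of two monotone lattice paths, use the fact that in $D_2 \iso A_1 \times A_1$ the operators $\te_{n-1}$ and $\te_n$ see disjoint sets of sign configurations (this is the paper's Dynkin symmetry $s_n \leftrightarrow -s_n$, which reflects a path across the antidiagonal), and run the standard signature-rule/ballot argument once for each operator. However, there is a genuine error, and it sits exactly in the ``synchronization details'' (i)--(ii) that you defer rather than verify: the placement of the two forced east steps of $p_{n-1}$. You place them ``surrounding'' the $2k$ interior steps, i.e.\ one before and one after, and with that placement the claimed correspondence is false. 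Take $k=1$ and the highest weight element $b_2 \otimes b_1$ with $b_2=(s_{n-1},s_n)=(-,+)$ and $b_1=(+,-)$, which has weight $0$: the $\cp$ from $b_2$ cancels the $\cm$ from $b_1$ for $\te_{n-1}$, and neither factor contributes to the $\te_n$ signature. Your encoding produces $p_{n-1}$ with step word $E\,E\,N\,E$ (vertices $(-1,-1),(0,-1),(1,-1),(1,0),(2,0)$) and $p_n$ with word $N\,E$ (vertices $(0,0),(0,1),(1,1)$); the reflection of $p_{n-1}$ across $y=x$ passes through $(0,1)\in p_n$, so your third condition fails and this highest weight element is not mapped into the claimed set. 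Likewise the other weight-$0$ highest weight element, $(-,-)\otimes(+,+)$, maps under your encoding to a pair violating the nonintersection condition itself (both paths contain $(1,0)$). So your map is not even well defined into the set of NILPs in the lemma, let alone a bijection: for $k=1$, $c_{n-1}=c_n=0$, the lemma's set has exactly two elements and your map hits neither.

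The repair is forced and is what the paper (following the type $B_2$ encoding of \cite{OS19II}) does: both forced east steps must come first, immediately after $u_{n-1}=(-1,-1)$, so that the sign-encoding portions of $p_{n-1}$ and $p_n$ start at $(1,-1)$ and $(0,0)$, and hence after each tensor factor $b_j$ both paths lie on the same antidiagonal $x+y=j$. Only with this synchronization does the prefix-counting argument you appeal to apply: an unmatched $\cp$ for $\te_{n-1}$ (a prefix of factors $b_1,\dotsc,b_j$ with more $(-,+)$'s than $(+,-)$'s) occurs precisely when $p_{n-1}$ meets $p_n$, and an unmatched $\cp$ for $\te_n$ occurs precisely when $p_n$ meets the reflection of $p_{n-1}$. (In the example above, the correct encoding gives $p_{n-1}$ the word $E\,E\,E\,N$, and both conditions hold.) A second, related slip: with the paper's tensor convention a $\cp$ from $b_j$ must cancel against a $\cm$ from $b_{j'}$ with $j'<j$, i.e.\ an \emph{earlier} path step, not ``a later $\cm$''; getting this direction wrong turns the prefix condition into a suffix condition and likewise breaks the geometric translation. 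With the forced steps placed at the start and the cancellation direction fixed, your argument becomes essentially the paper's proof.
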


\begin{proof}
We note that for any $(s_{n-1}, s_n) \in V$ there is a corresponding element obtained by sending $s_n \leftrightarrow -s_n$, which applied to every factor translates to reflecting the path $p_n$.
Therefore, if there is an intersection, it is sufficient to consider the case when the first intersection point is below the antidiagonal.
The bijection between the paths $(p_{n-1}, p_n)$ and highest weight elements is the same as the type $B_2$ case in~\cite[Thm.~4.4]{OS19II}.
This reflection is also a manifestation of the Dynkin diagram symmetry that sends $n-1 \leftrightarrow n$, so it is only sufficient to consider $e_{n-1}$.
Hence, the highest weight condition corresponds to the nonintersecting condition and is proven similar to~\cite[Thm.~4.4]{OS19II}.
\end{proof}

Next, we want to convert this to an honest NILP, which means we need to remove the symmetric path $p_{n-1}$.
We do this by noting that every time the path touches the antidiagonal, we have two choices.
Therefore, we ``fold'' the path $p_n$ to stay below the antidiagonal but we still need to retain the fact that we have two choices, which we encode by having two edges $N_{\pm}$.
We have $N_+$ correspond to the case when the previous step in the path was below the antidiagonal and $N_-$ when it was above antidiagonal.
We note that reflecting part of a path over the antidiagonal corresponds to interchanging $E \leftrightarrow N$.
Hence, the number of these paths are in bijection with paths from $(0,0)$ to $(x,y)$ on a square grid.

To make this precise in terms of the LGV lemma, let $\mcD$ denote the (infinite) ``grid'' (directed graph) that consists of $E \colon (i, j) \mapsto (i+1, j)$ and $N \colon (i, j) \mapsto (i, j+1)$ steps that do not have an endpoint on the antidiagonal (that is, either endpoint has coordinates $(i,i)$) and two steps $N_{\pm} \colon (i,i-1) \mapsto (i, i)$ that end on the antidiagonal.
As a consequence, the directed paths must lie weakly below the antidiagonal line of $y = x$.

\begin{ex}
We demonstrate Lemma~\ref{lemma:rank2_case} when $\lambda = 0$ with a pair of lattice paths $(p_1, p_2)$ in the symmetric and folded versions and the corresponding highest weight element:
\[
\begin{tikzpicture}[baseline=32, scale=0.5]
\draw[-, very thin, gray] (0,0) grid (5,5);
\draw[-, very thick, UQpurple, line join=round] (0,0) -- (3,0) -- (3,1) -- (5,1) -- (5,3);
\draw[-, very thick, UQpurple, line join=round, dashed] (0,0) -- (0,3) -- (1,3) -- (1,5) -- (3,5);
\draw[-, dashed] (2,0) -- (0,2);
\draw[-, dashed] (3,5) -- (5,3);
\draw[fill=UQpurple, color=UQpurple] (0, 0) circle (0.12);
\draw[fill=UQpurple, color=UQpurple] (5, 3) circle (0.12);
\draw[fill=UQpurple, color=UQpurple] (3, 5) circle (0.12);
\draw[-, very thick, blue, line join=round] (1,1) -- (2,1) -- (2,3) -- (3,3) -- (3,4) -- (4,4);
\draw[fill=blue, color=blue] (1, 1) circle (0.12);
\draw[fill=blue, color=blue] (4, 4) circle (0.12);
\end{tikzpicture}
\quad \longleftrightarrow \quad
\begin{tikzpicture}[baseline=32, scale=0.5]
\draw[-, thick] (0,0) -- (5,5);
\foreach \i in {1,...,5} {
  \draw[-, very thin, gray] (\i,\i) -- (5,\i);
  \draw[-, very thin, gray] (\i,\i-1) to[out=70,in=290] (\i,\i);
  \draw[-, very thin, gray] (\i,\i-1) to[out=110,in=250] (\i,\i);
}
\foreach \i in {1,...,5} {
  \draw[-, very thin, gray] (\i,0) -- (\i,\i-1);
}
\draw[-, very thin, gray] (0,0) -- (5,0);
\draw[-, dashed] (2,0) -- (1,1);
\draw[-, dashed] (4,4) -- (5,3);
\draw[-, very thick, UQpurple, line join=round] (0,0) -- (3,0) -- (3,1) -- (5,1) -- (5,3);
\draw[fill=UQpurple, color=UQpurple] (0, 0) circle (0.12);
\draw[fill=UQpurple, color=UQpurple] (5, 3) circle (0.12);
\draw[-, very thick, blue, line join=round] (1,1) -- (2,1) to[out=70,in=290] (2,2) -- (3,2) to[out=110,in=250] (3,3) -- (4,3) to[out=110,in=250] (4,4);
\draw[fill=blue, color=blue] (1, 1) circle (0.12);
\draw[fill=blue, color=blue] (4, 4) circle (0.12);
\end{tikzpicture}
\quad \longleftrightarrow \quad
\ytableaushort{-,+} \otimes \ytableaushort{-,-} \otimes \ytableaushort{+,+} \otimes \ytableaushort{+,-} \otimes \ytableaushort{-,-} \otimes \ytableaushort{+,+}\,.
\]
\end{ex}

\begin{lemma}
\label{lemma:path_counting_D}
The number of paths from $(0,0)$ to $(x,y)$ on the grid $\mcD$, where necessarily $x \geq y$, is
\[
\binom{x+y}{y}.
\]
\end{lemma}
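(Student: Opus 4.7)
The plan is to prove the formula by induction on $x + y$ using a recurrence satisfied by $f(x,y) := \#\{\text{paths in } \mcD \text{ from } (0,0) \text{ to } (x,y)\}$. The main work is classifying the last step of any path ending at $(x,y)$.

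First I would case-split on whether $(x,y)$ lies strictly below the antidiagonal. If $y < x$, then the only steps ending at $(x,y)$ are the standard $E \colon (x-1,y) \to (x,y)$ and standard $N \colon (x,y-1) \to (x,y)$, since neither of these has its terminal endpoint on the antidiagonal; and no $N_{\pm}$ step ends at $(x,y)$. This yields
\[
f(x,y) = f(x-1,y) + f(x,y-1), \qquad y < x.
\]
If $y = x$, the terminus is on the antidiagonal. An $E$ step from $(x-1,x)$ is unreachable because $(x-1,x)$ lies above the antidiagonal; a standard $N$ from $(x,x-1)$ is forbidden since its terminal lies on the antidiagonal; so the last step must be one of the two $N_{\pm}$ steps from $(x,x-1)$, giving
\[
f(x,x) = 2f(x,x-1).
\]
Together with the boundary $f(x,0) = 1$ (only $E$ steps), this determines $f$ uniquely on $\{(x,y) : 0 \le y \le x\}$.

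Next, I would verify that $\binom{x+y}{y}$ obeys the same recurrence and boundary. For $y < x$ this is Pascal's rule. For $y = x$, combining Pascal's rule with the symmetry $\binom{2x-1}{x} = \binom{2x-1}{x-1}$ gives $\binom{2x}{x} = 2\binom{2x-1}{x-1}$, matching $f(x,x) = 2f(x,x-1)$. The boundary $\binom{x}{0}=1$ is immediate. Induction on $x+y$ then concludes $f(x,y) = \binom{x+y}{y}$.

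The main subtlety is correctly identifying the admissible last steps near the antidiagonal: ruling out unreachable $E$-steps coming from above the antidiagonal, forbidden standard $N$-steps landing on it, and accounting for the doubling from $N_{\pm}$. Once this is pinned down, the verification is routine. As a more conceptual alternative, one could give a reflection-style bijection with all $\binom{x+y}{y}$ unrestricted lattice paths from $(0,0)$ to $(x,y)$: reflect each above-antidiagonal excursion across $y=x$ (swapping $E \leftrightarrow N$) to fold it below, and record at each antidiagonal touch whether the next excursion continues below ($+$) or crosses above ($-$); this yields exactly the $N_{\pm}$ labels on the resulting path in $\mcD$.
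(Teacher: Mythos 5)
Your main argument is correct, but it takes a genuinely different route from the paper. The paper's proof is a one-line bijection: it \emph{unfolds} each path in $\mcD$ across the antidiagonal (the two labels $N_{\pm}$ record exactly the information lost by folding), putting paths in $\mcD$ from $(0,0)$ to $(x,y)$ in bijection with unrestricted $E/N$-paths on the square grid, of which there are $\binom{x+y}{y}$. You instead derive the local recurrences $f(x,y)=f(x-1,y)+f(x,y-1)$ for $y<x$ and $f(x,x)=2f(x,x-1)$, check that $\binom{x+y}{y}$ satisfies them, and induct on $x+y$. Your approach is more elementary and self-contained, and it has the virtue of making explicit the admissible-step analysis near the antidiagonal (no step ends on it except $N_{\pm}$, while $E$ steps may leave it), which the paper leaves implicit; what it gives up is the bijection itself, which is what the paper actually reuses later (the unfolded paths reappear in the lozenge-tiling interpretation of the type $D$ multiplicities). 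One caution about your closing ``conceptual alternative,'' which is essentially the paper's proof: your labeling convention (recording whether the \emph{next} excursion goes below or above) does not quite give a bijection, since then the \emph{first} excursion is unrecorded and the map becomes two-to-one onto a proper subset. The paper's convention labels the \emph{incoming} excursion (whether the previous step was below or above the antidiagonal); with that choice the final, unlabeled segment of any path ending at $(x,y)$ with $x>y$ is forced to lie below the antidiagonal, and the folding is honestly bijective.
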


\begin{proof}
We unfold the path and the underlying graph to be on a square grid.
\end{proof}

Now we can prove a determinant formula and product formula for the multiplicity.

\begin{thm}
\label{thm:mult_type_D}
Let $\g = \so_{2n}$ and let $V = V(\fw_{n-1}) \oplus V(\fw_n)$.
Let $p = 0, 1$.
Define
\[
M^D_q(\lambda + p\Lambda_n) := \det \left[ \qbinom{2(k+i) + p}{k+i-j-\absval{\lambda_{n-j}}}{q} \right]_{i,j=0}^{n-1}.
\]
Then the multiplicity of $V(\lambda + p \fw_{n-1})$ and $V(\lambda + p \fw_n)$ in $V^{\otimes 2k+p}$ is $M^D_1(\lambda + p \Lambda_n)$.
Furthermore, we have
\begin{equation}
M^D_q(\lambda + p\fw_n) = q^{\Abs{\overline{\lambda}}} \frac{\displaystyle \prod_{i=1}^n [2k+2n-2i+p]_q! \times \prod_{1 \leq i < j \leq n} [a_i - a_j]_q [a_i + a_j]_q}{\displaystyle \prod_{i=1}^n \left[ k+n-1-a_i + \frac{p}{2} \right]_q! \left[k+n-1+a_i + \frac{p}{2} \right]_q!} \in \ZZ_{\geq 0}[q] \label{eq:D_q_prod},
\end{equation}
where $a_i = \lambda_i + n - i + \frac{p}{2}$.
\end{thm}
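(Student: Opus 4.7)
The plan is to extend the combinatorial framework of Sections~\ref{sec:mult_type_A} and~\ref{sec:mult_type_BC} to type~$D$. First, I would lift Lemma~\ref{lemma:rank2_case} from rank~$2$ to arbitrary rank, producing a bijection between highest-weight elements of weight $\lambda + p\fw_{n-1}$ or $\lambda + p\fw_n$ in $V^{\otimes 2k+p}$ and families $(p_0, \dotsc, p_{n-1})$ of paths in a hybrid grid: the paths $p_0, \dotsc, p_{n-2}$ travel on the standard square grid, while $p_{n-1}$ travels on the modified grid $\mcD$. The initial vertices are $u_i = (-i,-i)$ for $i = 0, \dotsc, n-1$ (shifted along the antidiagonal as in~\cite[Thm.~4.4]{OS19II}), and the terminal vertices are $v_j = (k + j + \abs{\lambda_{n-j}} + p,\, k - j - \abs{\lambda_{n-j}})$. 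For the operators $\te_i$ with $i \leq n-2$, which act only on the first $n-1$ spinor coordinates, the highest-weight condition translates to nonintersection among $p_0, \dotsc, p_{n-2}$ exactly as in the type $B/C$ proofs~\cite[Thms.~4.4, 4.12]{OS19II}. For $\te_{n-1}$ and $\te_n$, Lemma~\ref{lemma:rank2_case} shows that the local action translates to the nonintersection of $p_{n-1}$ with the reflection of $p_{n-2}$ across the antidiagonal, and the $N_\pm$ edges of $\mcD$ precisely encode the two compatible choices of the final sign $s_n$ when $p_{n-1}$ lies on the antidiagonal. The Dynkin diagram symmetry exchanging $n-1 \leftrightarrow n$ (equivalently, $\lambda_n \mapsto -\lambda_n$) is responsible for the absolute value $\abs{\lambda_{n-j}}$ in the entries and yields the joint multiplicity statement for $V(\lambda + p\fw_{n-1})$ and $V(\lambda + p\fw_n)$.

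Second, I would apply the LGV lemma to this NILP family. After translating $u_i$ to the origin, the number of $\mcD$-paths from $u_i$ to $v_j$ equals $\binom{2(k+i)+p}{k+i-j-\abs{\lambda_{n-j}}}$ by Lemma~\ref{lemma:path_counting_D}, producing the determinant formula at $q = 1$. For the $q$-refinement, I would introduce an area-type statistic on $\mcD$ that descends, under the unfolding in the proof of Lemma~\ref{lemma:path_counting_D}, to the usual area statistic on the square grid, whose generating function is $\qbinom{x+y}{y}{q}$. This upgrades each entry of the determinant to $\qbinom{2(k+i)+p}{k+i-j-\abs{\lambda_{n-j}}}{q}$, and the positivity $M^D_q \in \ZZ_{\geq 0}[q]$ follows automatically from the NILP interpretation as a generating function with nonnegative weights.

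Third, I would evaluate the determinant as the product in~\eqref{eq:D_q_prod} using Dodgson (Desnanot--Jacobi) condensation, exactly as indicated for Theorem~\ref{thm:q_prod_B_even}: induct on $n$, observing that removing $u_{n-1}$ or $v_{n-1}$ in a minor corresponds to modifying $k$ or $\lambda$ in a way that preserves the shifted spectrum $\{a_i\}$ appearing in the conjectured product, so both sides of~\eqref{eq:D_q_prod} satisfy the same recursion; the base case $n = 1$ reduces to a single $q$-binomial identity that can be checked directly. Alternatively, one can factor $q$-factorials out of the rows and columns and recognize the residual Vandermonde-like determinant by~\cite[Prop.~1]{Krat99}, which produces the factor $\prod_{i<j}[a_i - a_j]_q [a_i + a_j]_q$ characteristic of type~$D$; the absence of the short-root factor $\prod_i [2a_i]_q$ (present in the type $B/C$ product of Theorem~\ref{thm:q_prod_B_even}) reflects the fact that $\so_{2n}$ has no short simple roots. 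The main obstacle will be the determinant-to-product transformation, in particular tracking the half-integer shifts when $p = 1$ and verifying the cancellations among the $q$-factorials; a secondary subtlety is justifying that the LGV cancellation applies verbatim on the hybrid grid despite the nonstandard admissibility constraint defining $\mcD$ via the reflection of $p_{n-2}$, which should reduce to a $q$-enrichment of the unfolding argument of Lemma~\ref{lemma:path_counting_D}.
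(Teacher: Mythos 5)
Your proposal follows the same skeleton as the paper's proof: extend the rank-two folding (Lemma~\ref{lemma:rank2_case}) to a crystal-to-NILP bijection, obtain the determinant from the LGV lemma together with the path count of Lemma~\ref{lemma:path_counting_D} (your $+p$ shift of the terminal vertices is exactly what reproduces the entries $\qbinom{2(k+i)+p}{k+i-j-\absval{\lambda_{n-j}}}{q}$), and evaluate the determinant by Dodgson condensation as in Theorem~\ref{thm:q_prod_B_even}, with positivity coming from the $q$-weighted LGV count.

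However, your combinatorial dictionary is inverted, and as written the central bijection fails. With starting vertices $u_i = (-i,-i)$ and terminal vertices $v_j$ whose offset from the antidiagonal is $j + \absval{\lambda_{n-j}}$, nonintersection forces $p_i$ to end at $v_i$; hence the \emph{topmost} path $p_0$, which starts at the origin on the antidiagonal and ends at the vertex governed by $\lambda_n$, is the one carrying the spinor coordinate $s_n$, and it is the unique path that can ever touch the antidiagonal, so it is the path that needs the $N_{\pm}$ edges of $\mcD$. You instead place the \emph{bottommost} path $p_{n-1}$ on $\mcD$ and assign the pair $(s_{n-1},s_n)$ to $(p_{n-2},p_{n-1})$; but $p_{n-1}$ ends at $v_{n-1}$, whose offset involves $\lambda_1$, and it can never reach the antidiagonal because the $n-1$ paths above it block it (two monotone paths of equal parity that exchange order must share a vertex). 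So $\te_n$, which acts on $(s_{n-1},s_n)$, corresponds to the two topmost paths $(p_1,p_0)$, not to the two bottommost ones, and your stated assignment is inconsistent with your own terminal vertices. A second, smaller defect is the ``hybrid grid'' framing: the LGV lemma requires all paths to live in a single directed graph, so one path on $\mcD$ and the rest on the ordinary square grid is not directly admissible (and if the remaining paths were instead confined strictly below the antidiagonal, their counts would no longer be plain binomials). The paper avoids both problems by running \emph{all} paths in $\mcD$: nonintersection then automatically confines the $N_{\pm}$ edges to the topmost path. With these repairs, the rest of your outline coincides with the paper's argument.
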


\begin{proof}
The proof of the first claim is similar for the type $B_n$ case from~\cite[Thm.~4.4]{OS19II}.
Indeed, we note that we have the starting vertices $u_i = (-i, -i)$ and the terminal vertices $v_j = (k+j+\absval{\lambda_{n-j}}, k-j-\absval{\lambda_{n-j}})$.
From Lemma~\ref{lemma:path_counting_D} and the LGV lemma, we see that the number of paths in this graph is equal to the determinant~\eqref{thm:mult_type_D} at $q = 1$.
From the definition of the crystal operators and the folded version of Lemma~\ref{lemma:rank2_case}, a NILP corresponds to a highest weight element read along diagonals, where in the $k$-th path, the $k$-th entry is given by $E \mapsto +$ and $N \mapsto -$.

We can show the product formula for $M^D_q(\lambda)$ by using Dodgson condensation similar to the proof of Theorem~\ref{thm:q_prod_B_even}. 
To show $M^D_q(\lambda) \in \ZZ_{\geq 0}[q]$, we apply the LGV lemma.
\end{proof}

\begin{figure}
\[
\iftikz
\begin{tikzpicture}[baseline=10, scale=0.5]
\draw[-, thick] (-4,-4) -- (5,5);
\draw[-, very thin, gray] (-4,-4) -- (11,-4);
\foreach \i in {-3,...,5} {
  \draw[-, very thin, gray] (\i,\i) -- (11,\i);
  \draw[-, very thin, gray] (\i,\i-1) to[out=70,in=290] (\i,\i);
  \draw[-, very thin, gray] (\i,\i-1) to[out=110,in=250] (\i,\i);
}
\foreach \i in {-3,-2,...,5} {
  \draw[-, very thin, gray] (\i,-4) -- (\i,\i-1);
}
\foreach \i in {6,7,...,11}
  \draw[-, very thin, gray] (\i,-4) -- (\i,5);
\draw[-,dashed] (5,5) -- +(6,-6);
\draw[-,dashed] (-1,-1) -- +(3,-3);
\draw[very thick, blue, line join=round] (2,-4) -- (5,-4) -- (5,-3) -- (9,-3) -- (9, -2) -- (11,-2) -- (11,-1);
\draw[very thick, blue, line join=round] (1,-3) -- (3,-3) -- (3,-2) -- (6,-2) -- (6,0) -- (7,0) -- (7,1) -- (9,1);
\draw[very thick, blue, line join=round] (0,-2) -- (2,-2) -- (2,-1) -- (3,-1) -- (3,0) -- (5,0) -- (5,1) -- (6,1) -- (6,2) -- (8,2);
\draw[very thick, blue, line join=round] (-1,-1) -- (0,-1) to[out=110,in=250] (0,0) -- (2,0) -- (2,1) to[out=110,in=250] (2,2) -- (4,2) -- (4,3) to[out=70,in=290] (4,4) -- (5,4) to[out=110,in=250] (5,5);
\foreach \i in {2,3,4}
  \draw[very thick, red] (-\i, -\i) -- (-2+\i, -\i);
\foreach \i in {1,2,3,4} {
  \draw[fill=red, color=red] (-5+\i, -5+\i) circle (0.12);
  \draw (-5+\i, -5+\i) node[anchor=east] {$s_{\i}$};
}
\foreach \x/\i in {1/0,2/2,3/3,4/6} {
  \draw[fill=purple, color=purple] (11-\i, -1+\i) circle (0.12);
  \draw (11-\i, -1+\i) node[anchor=south west] {$t_{\x}$};
}
\foreach \i in {2,3,4}
  \draw[fill=purple, color=purple] (-2+\i, -\i) circle (0.12);
\end{tikzpicture}
\fi
\]
\caption{An example of an NILP on the grid $\mcD$ for type $D_4$ and $k = 6$.}
\label{fig:type_D_ex}
\end{figure}
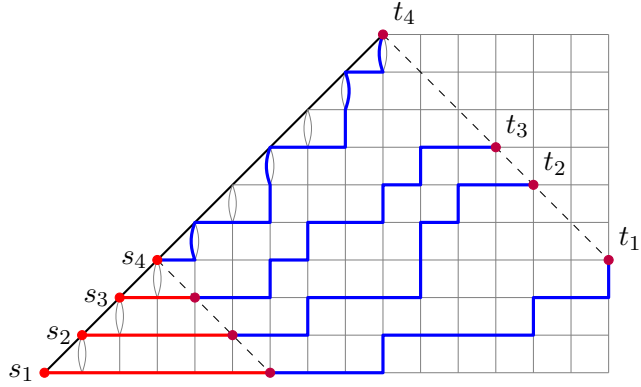

\begin{ex}
Consider the NILP from Figure~\ref{fig:type_D_ex}.
The corresponding highest weight element is
\[
\ytableaushort{-,+,+,+}\otimes
\ytableaushort{+,+,+,-} \otimes
\ytableaushort{+,-,-,-} \otimes
\ytableaushort{-,+,+,-} \otimes
\ytableaushort{+,-,-,+} \otimes
\ytableaushort{+,-,+,+} \otimes
\ytableaushort{+,+,+,+} \otimes
\ytableaushort{+,+,-,+} \otimes
\ytableaushort{-,+,+,-} \otimes
\ytableaushort{+,-,-,-} \otimes
\ytableaushort{+,+,+,+} \otimes
\ytableaushort{+,+,+,-} 
\]
\end{ex}

An alternative formulation of Theorem~\ref{thm:mult_type_D} was given by Grabiner and Magyar~\cite[Eq.~(47)]{GM93}.
For the case of $V^{\otimes 2k+1}$, we can show the result is equal to a $q$-dimension.

\begin{remark}
Christian Krattenthaler has provided an alternative proof of Theorem~\ref{thm:q_prod_B_even} and Theorem~\ref{thm:mult_type_D} by using~\cite[Thm.~27]{Krat99} with $A = -2k - p + 1, L_i = -a_i + \frac{p+1}{2} + k - 1$ and $A = -2k - 1, L_i = -a_i + \frac{p}{2} + k - 1$, respectively, after reversing the rows (and columns for Theorem~\ref{thm:mult_type_D} and an index shift), transposing the matrix, and applying the identity
$
\qbinom{a}{b}{q} = (-1)^b q^{b(2a-b+1)/2} \qbinom{b-a-1}{b}{q}.
$
\end{remark}

\begin{thm}
Fix positive integers $k$ and $n$.
Let $\lambda$ be a partition contained inside of an $n \times k$ rectangle.
Then we have
\[
M^D_q(\lambda + \Lambda_n) = q^{\Abs{\overline{\lambda}}} \dim_q(\overline{\lambda}'),
\]
where $\dim_q(\overline{\lambda}')$ be the $q$-dimension of $V(\overline{\lambda}')$ in type $B_k$.
Moreover, $M^D_1(\lambda + \Lambda_n)$ equals the multiplicity of $V(\lambda + \Lambda_{n'})$ for $n' = n-1, n$ in $V^{\otimes 2k + 1}$.
\end{thm}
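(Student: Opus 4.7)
The plan is to follow the same template as the $p=1$ case of Theorem~\ref{thm:q_mult_dim_B}: prove the multiplicity statement by the NILP/LGV construction of Theorem~\ref{thm:mult_type_D} specialized to $p=1$, and then identify the resulting determinant with the stated $q$-dimension by a direct product-formula comparison. For the multiplicity, first observe that the rightmost tensor factor of any highest weight element in $V^{\otimes 2k+1}$ of weight $\lambda + \Lambda_n$ is forced to be a highest weight spinor (with the two choices $u_{\Lambda_n}$ and $u_{\Lambda_{n-1}}$ related by the $n \leftrightarrow n-1$ Dynkin symmetry, producing the two values $n' = n-1,n$ in the statement). Removing that factor reduces the count to highest weight elements in $V^{\otimes 2k}$ of weight $\lambda$, which by the folded version of Lemma~\ref{lemma:rank2_case} is a NILP problem on the grid $\mcD$ with starting vertices $u_i = (-i,-i)$ and terminal vertices shifted by $\Lambda_n$; these are exactly the $p=1$ boundary data of Theorem~\ref{thm:mult_type_D}. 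Applying the LGV lemma with Lemma~\ref{lemma:path_counting_D} yields $M^D_q(\lambda + \Lambda_n)$, and specialization at $q=1$ gives the multiplicity.

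For the identity $M^D_q(\lambda + \Lambda_n) = q^{\Abs{\overline{\lambda}}} \dim_q(\overline{\lambda}')$ in type $B_k$, the primary approach is to match the product formula~\eqref{eq:D_q_prod} at $p=1$ with the Weyl $q$-dimension formula. Writing $a_i = \lambda_i + n - i + 1/2$, the positive roots $\epsilon_i - \epsilon_j$, $\epsilon_i + \epsilon_j$ (for $i < j$), and $\epsilon_i$ of $\so_{2k+1}$ produce the numerator factors $[a_i - a_j]_q$, $[a_i + a_j]_q$, and $[2a_i]_q$ respectively, which appear on the right-hand side of~\eqref{eq:D_q_prod} after reindexing via the complementarity identity relating the $a_i$ for $\lambda$ inside an $n \times k$ rectangle to the analogous quantities for $\overline{\lambda}'$ inside a $k \times n$ rectangle. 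The factorials in the denominator match similarly. Positivity $M^D_q(\lambda + \Lambda_n) \in \ZZ_{\geq 0}[q]$ then follows from the NILP interpretation.

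A more structural alternative would be to construct a direct bijection between the NILPs computing $M^D_q(\lambda + \Lambda_n)$ on $\mcD$ and a path model for $\dim_q V(\overline{\lambda}')$ in type $B_k$ built from Sundaram tableaux or type $B_k$ Proctor patterns (Section~\ref{sec:tableaux_patterns}), in the spirit of the $\phi$-construction in the proof of Theorem~\ref{thm:q_mult_dim_A}: vertical steps on $\mcD$ are traded for diagonal steps encoding tableau entries, while the doubled $N_\pm$ steps along the antidiagonal correspond to the freedom in the rightmost half-integer column of a type $B_k$ Proctor pattern, equivalently to the at-most-once appearance of the Sundaram $\infty$-symbol per row. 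Such a bijection would also automatically realize the shift by $q^{\Abs{\overline{\lambda}}}$ via the same edge-weight bookkeeping as in type $A$.

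The main obstacle in either approach is the careful accounting of the half-integer shifts combined with the doubling phenomenon on the antidiagonal, the goal being to confirm that no spurious factor appears --- in particular, no analog of the correction $\prod_{a=1}^{k-1}(q^a+1) = \dim_q(\omega_k)$ that occurred in the $p=0$ case of Theorem~\ref{thm:q_mult_dim_B}. The absence of this correction is structurally expected, since $V(\overline{\lambda}')$ here is a genuine $\SO_{2k+1}$-module rather than a $\Pin$-cover module and therefore carries no spinor ambiguity; making that cancellation manifest either combinatorially (in the bijective approach) or algebraically (in the Krattenthaler-style manipulation of~\cite{Krat99} used to derive~\eqref{eq:D_q_prod}) is the technical heart of the argument.
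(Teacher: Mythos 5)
Your derivation of the multiplicity claim contains a false step. With the paper's tensor product convention, the rightmost factor of a highest weight element must itself be highest weight, but the remaining $2k$ factors need \emph{not} form a highest weight element of $V^{\otimes 2k}$, so removing that factor does not reduce the count to highest weight elements of weight $\lambda$ in $V^{\otimes 2k}$. Concretely, in type $D_2$ with $k=1$ and $\lambda = 0$, the element $(+,+)\otimes(-,-)\otimes(+,+)$ is highest weight of weight $\Lambda_2$ in $V^{\otimes 3}$, yet $(+,+)\otimes(-,-)$ is not highest weight (its $i=2$ signature is $\cm\,\cp$, which admits no cancellation); and indeed $M^D_1(\Lambda_2) = \det\left[\begin{smallmatrix} 3 & 1\\ 10 & 5\end{smallmatrix}\right] = 5$, whereas the multiplicity of $V(0)$ in $V^{\otimes 2}$ is $2$, so no such reduction can hold. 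Your argument is also internally inconsistent: highest weight elements of weight $\lambda$ in $V^{\otimes 2k}$ correspond to the $p=0$ boundary data, not to the ``terminal vertices shifted by $\Lambda_n$'' that you then invoke. The flaw is recoverable, because the multiplicity statement is literally the $p=1$ case of Theorem~\ref{thm:mult_type_D}, and citing that theorem is essentially all the paper does here; but the re-derivation you propose would fail as written.

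For the identity $M^D_q(\lambda+\Lambda_n) = q^{\Abs{\overline{\lambda}}}\dim_q(\overline{\lambda}')$, your two routes have different status. The paper's actual proof is what you call the ``structural alternative'': a weight-preserving bijection between the NILPs on $\mcD$ and Sundaram tableaux of shape $\overline{\lambda}'$, exactly parallel to the King tableaux argument in Theorem~\ref{thm:q_mult_dim_B}, with the $\infty$ entries of a Sundaram tableau recording which of the two steps $N_{\pm}$ is used; the edge-weight bookkeeping produces the shift $q^{\Abs{\overline{\lambda}}}$, and no spin correction factor appears, just as you predicted. Your primary route --- matching the product formula~\eqref{eq:D_q_prod} at $p=1$ against the type $B_k$ Weyl $q$-dimension via complementation of $\{a_i\}$ and $\bigl\{\overline{\lambda}'_j + k - j + \tfrac{1}{2}\bigr\}$ inside $\bigl\{\tfrac{1}{2},\dotsc,n+k-\tfrac{1}{2}\bigr\}$ --- is viable in principle, and it is in fact how the paper handles the \emph{even} case (Corollary~\ref{cor:q_mult_D_even}, by comparison with~\cite[Lemma~3.6]{BKW16}); so it constitutes a genuinely different, algebraic proof of the first claim. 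However, as written it is only a sketch, and it contains a misstatement: the short-root factors $[2c_i]_q$ coming from the roots $\epsilon_i$ of $\so_{2k+1}$ do \emph{not} appear on the right-hand side of~\eqref{eq:D_q_prod}, even after reindexing; under complementation they must be absorbed into the factorial quotients, and carrying out that bookkeeping (the analog of the cited computations in~\cite{BKW16}) is exactly the part of the argument you have left undone.
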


\begin{proof}
The first claim follows from the analogous bijection between Sundaram tableaux and NILPs as in the type $B_n$ case with King tableaux with the $\infty$ entries in the Sundaram tableaux being one of the choices $N_{\pm}$ for the vertical steps.
The fact that this equals the multiplicity is analogous to the proof of Theorem~\ref{thm:mult_type_D}.
\end{proof}

When we look at an even power of $V$, we have the following simple ratio relating the two formulas.
This can be shown by comparing the product formula~\eqref{eq:D_q_prod} and~\cite[Lemma~3.6]{BKW16}.

\begin{cor}
\label{cor:q_mult_D_even}
Fix positive integers $k$ and $n$.
Let $\lambda$ be a partition contained inside of an $n \times k$ rectangle.
Then we have
\begin{equation}
M^D_q(\lambda) = q^{\Abs{\overline{\lambda}}} \prod_{i=1}^k \frac{q^{\overline{\lambda}'_i+k-i} + 1}{q^{k-i} + 1} \dim_q(\overline{\lambda}'), \label{eq:D_q_dim}
\end{equation}
where $\dim_q(\overline{\lambda}')$ be the $q$-dimension of $V(\overline{\lambda}')$ in type $D_k$.
\end{cor}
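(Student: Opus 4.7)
The strategy, as indicated in the remark accompanying the statement, is a direct comparison of two explicit product formulas. On the one hand, specializing Theorem~\ref{thm:mult_type_D} at $p=0$ gives
\[
M^D_q(\lambda) = q^{\Abs{\overline{\lambda}}}\,\frac{\prod_{i=1}^n [2k+2n-2i]_q!\,\prod_{1 \leq i < j \leq n}[a_i-a_j]_q[a_i+a_j]_q}{\prod_{i=1}^n [k+n-1-a_i]_q!\,[k+n-1+a_i]_q!},
\]
where $a_i = \lambda_i + n - i$. On the other hand, \cite[Lemma~3.6]{BKW16} (equivalently, the Weyl $q$-dimension formula applied in type $D_k$ to the highest weight $\overline{\lambda}'$) gives
\[
\dim_q(\overline{\lambda}') = \prod_{1 \leq i < j \leq k} \frac{[b_i-b_j]_q [b_i+b_j]_q}{[j-i]_q[2k-i-j]_q},
\]
with $b_j = \overline{\lambda}'_j + k - j$. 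The plan is to divide the first product by $q^{\Abs{\overline{\lambda}}}$ times the second, and to show the quotient equals $\prod_{i=1}^k (q^{b_i}+1)/(q^{k-i}+1)$.

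The key combinatorial input used to bridge the two products is the classical fact that because $\lambda$ sits inside an $n \times k$ rectangle, the beta sequences $\{a_i\}_{i=1}^n$ and $\{k+n-1-b_j\}_{j=1}^k$ form complementary subsets of $\{0, 1, \ldots, n+k-1\}$. This identification lets us rewrite products over $1 \leq i < j \leq n$ on the $M^D_q$ side into products over $1 \leq i < j \leq k$ on the $\dim_q$ side, and correspondingly lets us match the $q$-factorial contributions $[k+n-1\pm a_i]_q!$ on the left against the denominator factors $[j-i]_q[2k-i-j]_q$ on the right. After these cancellations only single-index contributions indexed by $1 \leq i \leq k$ survive, and repeatedly applying the identity $[2m]_q = [m]_q(q^m+1)$ to distribute each $[2k+2n-2i]_q!$ contribution between numerator and denominator produces exactly the ratio $\prod_{i=1}^k (q^{b_i}+1)/(q^{k-i}+1)$ claimed.

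The principal obstacle is purely bookkeeping: because the two formulas live naturally on different index sets (rows of $\lambda$ versus rows of $\overline{\lambda}'$), aligning them cleanly requires careful attention to which factors of $[m]_q$ come from which $q$-factorial, and to the correct splittings of each $[2m]_q$. A convenient way to organize the computation is to verify the identity first at $\lambda = \emptyset$, in which case $\overline{\lambda}' = (n^k)$ and the identity reduces to a well-known product evaluation for the type $D_k$ $q$-dimension of a rectangular partition; one can then either induct on $|\lambda|$ by adding a single box and checking both sides transform identically, or simply perform the full telescoping comparison directly, which is routine once the complementarity of the beta sequences is exploited. The appearance of the factors $q^{b_i}+1$ is itself a sanity check: they are the per-row traces of the type-$D_k$ spinor, reflecting that the residual discrepancy between $V^{\otimes 2k}$ and its spin-shifted counterpart is precisely a (half-)spinor contribution analogous to~\eqref{eq:q_dim_spinor}.
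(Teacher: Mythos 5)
Your overall route is the same as the paper's: the corollary is proved there precisely by comparing the product formula~\eqref{eq:D_q_prod} (Theorem~\ref{thm:mult_type_D} at $p=0$) with the type $D_k$ $q$-dimension product of~\cite[Lemma~3.6]{BKW16}, which is exactly what you set up. However, the one concrete fact on which your comparison rests is false as stated. You claim that $\{a_i\}_{i=1}^n$ with $a_i = \lambda_i + n - i$ and $\{k+n-1-b_j\}_{j=1}^k$ with $b_j = \overline{\lambda}'_j + k - j$ are complementary subsets of $\{0,1,\dotsc,n+k-1\}$. Since $\overline{\lambda}'_j = n - \lambda'_{k+1-j}$, one computes $k+n-1-b_j = \lambda'_{k+1-j} + k - (k+1-j)$, so your claim amounts to saying that the beta-set of $\lambda$ (padded to $n$ parts) and the beta-set of $\lambda'$ (padded to $k$ parts) are complementary, which is not true. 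Counterexample: $n=1$, $k=2$, $\lambda = \emptyset$. Then $\{a_i\} = \{0\}$, $\overline{\lambda}' = (1,1)$, $\{b_j\} = \{2,1\}$, and $\{k+n-1-b_j\} = \{0,1\}$: this overlaps $\{a_i\}$ at $0$ and misses $2$. The correct statement is that $\{a_i\}$ and $\{b_j\}$ \emph{themselves} are complementary in $\{0,1,\dotsc,n+k-1\}$ (in the example, $\{0\} \sqcup \{1,2\}$); the reflection $x \mapsto n+k-1-x$ is already absorbed in passing from $\lambda'$ to $\overline{\lambda}'$, because $b_j = (n+k-1) - \bigl(\lambda'_{k+1-j} + k - (k+1-j)\bigr)$.

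This matters because every subsequent step of your argument --- matching the factorials $[k+n-1\pm a_i]_q!$ against the denominators $[j-i]_q[2k-i-j]_q$, converting products over pairs $1 \leq i < j \leq n$ into products over pairs $1 \leq i < j \leq k$, and extracting the factors $(q^{b_i}+1)/(q^{k-i}+1)$ via $[2m]_q = [m]_q(q^m+1)$ --- is deferred to ``routine bookkeeping'' predicated on this complementarity, so the proof as written does not go through. With the corrected fact $\{a_i\} \sqcup \{b_j\} = \{0,\dotsc,n+k-1\}$ the plan is sound and reproduces the comparison the paper leaves implicit; but you should then actually carry out the cancellation once, since that computation is the entire content of the corollary.
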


We note that the ratio in Equation~\eqref{eq:D_q_dim} is $2^k / 2^k = 1$ at $q = 1$.
In the sequel, we will provide a combinatorial proof that Corollary~\ref{cor:q_mult_D_even} holds at $q = 1$.

\subsection{Interpretation with lozenge tilings}
\label{sec:lozenge}

We give a unified description for all of the combinatorial skew Howe duality discussed in this section using lozenge tilings.
We realize the combinatorial skew Howe duality by using the fact that lozenge tilings of a half hexagon have three different sets of paths that uniquely specify the tiling.
The lozenge tilings are naturally in bijection with GT patterns, so they can be used to describe irreducible representations of $\gl_n$.
This also accounts for the natural symmetry of Corollary~\ref{cor:symmetry_A}.
The remaining cases then are based on imposing additional symmetries to the lozenge tilings coming from~\cite{Proctor94} on GT patterns.
These can be seen as manifestations of the branching rule from the natural embedding of $\g$ inside of $\gl_N$.
These symmetries on lozenge tilings were discussed in~\cite[Sec.~3.2]{BG15}.

We begin by describing the lozenge tilings on a half hexagon.
Fix a partition $\mu$, and let $a_i = \mu_i + k + 1 - i$.
We will be using the lozenge tiles $R, G, B$
\[
\begin{tikzpicture}
\draw[fill=red!30] (0,0) -- ++(0,1) -- ++(30:1) -- ++(0,-1) -- cycle;
\draw[fill=green!30] (3,0)  -- ++(0,1) -- ++(150:1) -- ++ (0,-1) -- cycle;
\draw[fill=blue!30] (5,0.2) -- ++(30:1) -- ++(150:1) -- ++ (30:-1) -- cycle;
\end{tikzpicture}
\]
respectively.
The region we will be tiling is a half hexagon $\mcH_{\lambda}$ with the top and bottom sides having length $n$, the left side having length $k$, which means the right side will have length $n + k$.
We will place the $B$ tiles along the right boundary at heights $a_i$, where they protrude outside of the region. Sometimes we consider these $B$ tiles to actually be triangles, so the result lies perfectly inside the half hexagon.
Lozenge tilings of $\mcH_{\lambda}$ are in bijection with GT patterns, where for row $\mu^{(j)}$ in a GT pattern $(\mu^{(j)})_{j=1}^k$, we place $B$ tiles at heights $\mu^{(j)}_i + k + 1 - i$ in the $j$-th column from the right.
It is a classical fact that this uniquely determines a lozenge tiling as the $B$ tiles can be seen as the tops of cubes stacked in a corner, known as a plane partition in combinatorics (see, \textit{e.g.},~\cite{ECII}).

Now we can give an explanation of the two NILPs that appear in the proof of Theorem~\ref{thm:q_mult_dim_A}.
From an NILP that contributes to the multiplicity of $V(\lambda)$, we obtain a lozenge tiling by considering $p_i$ to be a path starting from the $i$-th position along left boundary from the bottom with every horizontal (resp.\ vertical) step corresponding to an $R$ (resp.~$G$) tile.
This also uniquely determines the lozenge tiling as the only tiles missing must be $B$ tiles.
Next, we can take paths in this lozenge tiling that avoid the $R$ tiles, and this will correspond to the paths $\overline{p}_i$ with $\overline{s}_i$ being on the left at the $i$-th position from the top with $B$ (resp.~$G$ tiles) translating to horizontal (resp.~diagonal) steps.
This yields the semistandard tableaux that gives the dimension of $V(\overline{\lambda}')$.
Finally, by taking the paths in the lozenge tiling avoiding the $G$ tiles, we have the NILPs for the semistandard tableaux for $V(\lambda')$.

\begin{ex}
\label{ex:half_hexagon}
We consider the semistandard tableau from Example~\ref{ex:LGV_Howe_dual_A}, which recall that $n = 5$ and $k = 6$.
We see that it has the corresponding GT pattern and lozenge tiling of
\[
\begin{array}{ccccccccccc}
5 && 4 && 2 && 2 && 1 && 0 \\
& 5 && 3 && 2 && 2 && 0 \\
&& 5 && 3 && 2 && 2 \\
&&& 3 && 2 && 2 \\
&&&& 3 && 2 \\
&&&&& 3
\end{array}
\qquad \longleftrightarrow \qquad
\iftikz
\begin{tikzpicture}[scale=0.5,baseline=70]
\draw (0,0) -- ++(150:6) -- ++(0,5) -- ++(30:6);
\foreach \y in {0,2,4,5,8,10}
  \draw[fill=blue!30] (0,\y) -- ++(30:1) -- ++(150:1) -- ++ (30:-1) -- cycle;
\foreach \y in {0,3,4,6,9}
  \draw[fill=blue!30] (150:1) ++ (0,\y) -- ++(30:1) -- ++(150:1) -- ++ (30:-1) -- cycle;
\foreach \y in {2,3,5,8}
  \draw[fill=blue!30] (150:2) ++ (0,\y) -- ++(30:1) -- ++(150:1) -- ++ (30:-1) -- cycle;
\foreach \y in {2,3,5}
  \draw[fill=blue!30] (150:3) ++ (0,\y) -- ++(30:1) -- ++(150:1) -- ++ (30:-1) -- cycle;
\foreach \y in {2,4}
  \draw[fill=blue!30] (150:4) ++ (0,\y) -- ++(30:1) -- ++(150:1) -- ++ (30:-1) -- cycle;
\draw[fill=blue!30] (150:5) ++ (0,3) -- ++(30:1) -- ++(150:1) -- ++ (30:-1) -- cycle;
\foreach \y in {1,7}
  \draw[fill=green!30] (0,\y)  -- ++(0,1) -- ++(150:1) -- ++ (0,-1) -- cycle;
\foreach \y in {0,1,4,6,7}
  \draw[fill=green!30] (150:2) ++ (0,\y)  -- ++(0,1) -- ++(150:1) -- ++ (0,-1) -- cycle;
\foreach \x in {3,4,5} {
  \foreach \y in {0,1}
    \draw[fill=green!30] (150:\x) ++ (0,\y)  -- ++(0,1) -- ++(150:1) -- ++ (0,-1) -- cycle;
}
\draw[fill=green!30] (150:5) ++ (0,2)  -- ++(0,1) -- ++(150:1) -- ++ (0,-1) -- cycle;
\foreach \y in {3,6,9}
  \draw[fill=red!30] (0,\y) -- ++(0,1) -- ++ (210:1) -- ++(0,-1) -- cycle;
\foreach \y in {1,2,5,7,8}
  \draw[fill=red!30] (150:1) ++ (0,\y) -- ++(0,1) -- ++ (210:1) -- ++(0,-1) -- cycle;
\foreach \x in {3,4,5} {
  \foreach \y in {9-\x,10-\x}
    \draw[fill=red!30] (150:\x) ++ (0,\y) -- ++(0,1) -- ++ (210:1) -- ++(0,-1) -- cycle;
}
\foreach \y in {3,4}
  \draw[fill=red!30] (150:\y) ++ (0,7-\y) -- ++(0,1) -- ++ (210:1) -- ++(0,-1) -- cycle;
\draw[very thick, color=blue] (150:6) ++ (0,0.5) -- ++(150:-4) -- ++(30:1) -- ++(150:-1);
\draw[very thick, color=blue] (150:6) ++ (0,1.5) -- ++(150:-4) -- ++(30:2);
\draw[very thick, color=blue] (150:6) ++ (0,2.5) -- ++(150:-1) -- ++(30:2) -- ++(150:-1) -- ++(30:2);
\draw[very thick, color=blue] (150:6) ++ (0,3.5) -- ++(30:3) -- ++(150:-1) -- ++(30:1) -- ++(150:-1);
\draw[very thick, color=blue] (150:6) ++ (0,4.5) -- ++(30:3) -- ++(150:-1) -- ++(30:2);
\foreach \i/\y in {0/1,1/3,2/6,3/7,4/9} {
  \draw[fill=purple, color=purple] (0, \y+0.5) circle (0.12);
  \draw[fill=purple, color=purple] (150:6) ++ (0, \i+0.5) circle (0.12);
  \draw (150:6) ++ (0, \i+0.5) node[anchor=east] {$\widetilde{s}_{\i}$};
  \draw (0, \y+0.5) node[anchor=west] {$t_{\i}$};
}
\end{tikzpicture}
\fi
\]
Note that the path from $\widetilde{s}_i \to t_i$ in the lozenge tiling that avoid the $B$ tiles is the path from $s_i \to t_i$ in Example~\ref{ex:LGV_mult_type_A_bij} between the dashed lines.
Furthermore, the paths from $\overline{s}_i \to \overline{t}_i$ in Example~\ref{ex:LGV_Howe_dual_A} correspond to the paths in the lozenge tiling that avoids the $R$ tile.
The NILP that would correspond to a semistandard tableau of shape $\lambda'$ come from taking the paths that avoid the $G$ tiles.
\end{ex}

We can realize the skew Howe duality in terms of lozenge tilings in a different way.
We allow ourselves to take only triangles along the middle of a hexagon, so no $B$ tile can cross the middle, with side lengths alternating between $m$ and $n$.
Therefore, one side corresponds to the multiplicity of $V(\lambda)$ in $V^{\otimes k}$ for $\gl_n$ as before.
The other side we will consider as the representation $V(\lambda')$ for $\gl_k$ from the GT pattern.
Thus, this gives a natural combinatorial description of Equation~\eqref{eq:skew_Howe_A} and the skew Howe duality in~\eqref{eq:gl_gl_skew_Howe}.

\begin{ex}
One tiling of a hexagon for $n = 5$ and $k = 6$ with $\lambda = 44421$ representing the combinatorial skew Howe duality and their corresponding pair of GT patterns is
\[
\iftikz
\begin{tikzpicture}[scale=0.5,baseline=75]
\draw (0,0) -- ++(150:6) -- ++(0,5) -- ++(30:6) -- ++(-30:5) -- ++(0,-6) -- ++(-150:5);
\foreach \y in {0,2,4,5,9,10}
  \draw[fill=black!50] (0,\y) -- ++(150:1) -- ++(30:1) -- cycle;
\foreach \y in {0,3,4,7,9}
  \draw[fill=blue!30] (150:1) ++ (0,\y) -- ++(30:1) -- ++(150:1) -- ++ (30:-1) -- cycle;
\foreach \y in {1,3,5,8}
  \draw[fill=blue!30] (150:2) ++ (0,\y) -- ++(30:1) -- ++(150:1) -- ++ (30:-1) -- cycle;
\foreach \y in {2,4,6}
  \draw[fill=blue!30] (150:3) ++ (0,\y) -- ++(30:1) -- ++(150:1) -- ++ (30:-1) -- cycle;
\foreach \y in {2,5}
  \draw[fill=blue!30] (150:4) ++ (0,\y) -- ++(30:1) -- ++(150:1) -- ++ (30:-1) -- cycle;
\draw[fill=blue!30] (150:5) ++ (0,4) -- ++(30:1) -- ++(150:1) -- ++ (30:-1) -- cycle;
\foreach \y in {1,8}
  \draw[fill=green!30] (0,\y)  -- ++(0,1) -- ++(150:1) -- ++ (0,-1) -- cycle;
\foreach \x in {3,4,5} {
  \foreach \y in {0,1}
    \draw[fill=green!30] (150:\x) ++ (0,\y)  -- ++(0,1) -- ++(150:1) -- ++ (0,-1) -- cycle;
}
\foreach \y in {2,3}
  \draw[fill=green!30] (150:5) ++ (0,\y)  -- ++(0,1) -- ++(150:1) -- ++ (0,-1) -- cycle;
\draw[fill=green!30] (150:2) ++ (0,7)  -- ++(0,1) -- ++(150:1) -- ++ (0,-1) -- cycle;
\draw[fill=green!30] (150:1) ++ (0,6)  -- ++(0,1) -- ++(150:1) -- ++ (0,-1) -- cycle;
\draw[fill=green!30] (150:3) ++ (0,3)  -- ++(0,1) -- ++(150:1) -- ++ (0,-1) -- cycle;
\draw[fill=green!30] (150:2) ++ (0,0)  -- ++(0,1) -- ++(150:1) -- ++ (0,-1) -- cycle;
\draw[fill=green!30] (150:1) ++ (0,2)  -- ++(0,1) -- ++(150:1) -- ++ (0,-1) -- cycle;
\foreach \y in {3,6,7}
  \draw[fill=red!30] (0,\y) -- ++(0,1) -- ++ (210:1) -- ++(0,-1) -- cycle;
\foreach \y in {1,5,8}
  \draw[fill=red!30] (150:1) ++ (0,\y) -- ++(0,1) -- ++ (210:1) -- ++(0,-1) -- cycle;
\foreach \x in {3,4,5} {
  \draw[fill=red!30] (150:\x) ++ (0,10-\x) -- ++(0,1) -- ++ (210:1) -- ++(0,-1) -- cycle;
  \draw[fill=red!30] (150:\x-1) ++ (0,9-\x) -- ++(0,1) -- ++ (210:1) -- ++(0,-1) -- cycle;
}
\draw[fill=red!30] (150:4) ++ (0,3) -- ++(0,1) -- ++ (210:1) -- ++(0,-1) -- cycle;
\draw[fill=red!30] (150:2) ++ (0,4) -- ++(0,1) -- ++ (210:1) -- ++(0,-1) -- cycle;
\draw[fill=red!30] (150:2) ++ (0,2) -- ++(0,1) -- ++ (210:1) -- ++(0,-1) -- cycle;
\foreach \y in {1,3,6,7,8}
  \draw[fill=black!50] (0,\y) -- ++(30:1) -- ++(150:1) -- cycle;
\foreach \y in {1,3,6,7}
  \draw[fill=blue!30] (30:1) ++ (0,\y) -- ++(30:1) -- ++(150:1) -- ++ (30:-1) -- cycle;
\foreach \y in {2,4,6}
  \draw[fill=blue!30] (30:2) ++ (0,\y) -- ++(30:1) -- ++(150:1) -- ++ (30:-1) -- cycle;
\foreach \y in {2,4}
  \draw[fill=blue!30] (30:3) ++ (0,\y) -- ++(30:1) -- ++(150:1) -- ++ (30:-1) -- cycle;
\draw[fill=blue!30] (30:4) ++ (0,3) -- ++(30:1) -- ++(150:1) -- ++ (30:-1) -- cycle;
\foreach \x in {1,2,3,4,5} {
  \foreach \y in {9,10}
    \draw[fill=green!30] (30:\x) ++ (0,\y-\x)  -- ++(0,1) -- ++(150:1) -- ++ (0,-1) -- cycle;
}
\draw[fill=green!30] (30:2) ++ (0,1)  -- ++(0,1) -- ++(150:1) -- ++ (0,-1) -- cycle;
\draw[fill=green!30] (30:2) ++ (0,3)  -- ++(0,1) -- ++(150:1) -- ++ (0,-1) -- cycle;
\draw[fill=green!30] (30:4) ++ (0,2)  -- ++(0,1) -- ++(150:1) -- ++ (0,-1) -- cycle;
\draw[fill=green!30] (30:4) ++ (0,4)  -- ++(0,1) -- ++(150:1) -- ++ (0,-1) -- cycle;
\draw[fill=green!30] (30:5) ++ (0,3)  -- ++(0,1) -- ++(150:1) -- ++ (0,-1) -- cycle;
\foreach \y in {0,2,4,5}
  \draw[fill=red!30] (30:1) ++ (0,\y) -- ++(0,1) -- ++ (210:1) -- ++(0,-1) -- cycle;
\foreach \y in {0,5}
  \draw[fill=red!30] (30:2) ++ (0,\y) -- ++(0,1) -- ++ (210:1) -- ++(0,-1) -- cycle;
\foreach \x in {3,4,5} {
  \foreach \y in {0,1}
    \draw[fill=red!30] (30:\x) ++ (0,\y) -- ++(0,1) -- ++ (210:1) -- ++(0,-1) -- cycle;
}
\foreach \y in {3,5}
  \draw[fill=red!30] (30:3) ++ (0,\y) -- ++(0,1) -- ++ (210:1) -- ++(0,-1) -- cycle;
\draw[fill=red!30] (30:5) ++ (0,2) -- ++(0,1) -- ++ (210:1) -- ++(0,-1) -- cycle;
\end{tikzpicture}
\fi
\qquad \longleftrightarrow \qquad
\begin{gathered}
\begin{array}{ccccccccccc}
5 && 5 && 2 && 2 && 1 && 0 \\
& 5 && 3 && 2 && 2 && 0 \\
&& 5 && 3 && 2 && 1 \\
&&& 4 && 3 && 2 \\
&&&& 4 && 2 \\
&&&&& 4
\end{array}
\\[10pt]
\begin{array}{ccccccccc}
4 && 4 && 4 && 2 && 1 \\
& 4 && 4 && 2 && 1 \\
&& 4 && 3 && 2 \\
&&& 3 && 2 \\
&&&& 3
\end{array}
\end{gathered}
\]
\end{ex}

Next, we will interpret Theorem~\ref{thm:q_mult_dim_B} in terms of lozenge tilings.
In this case for $\Sp_{2k}$, we want to impose a horizontal reflection symmetry to the half hexagon, which further restricts us to the quarter hexagon.
Indeed, this requires that there are $2n$ steps on the left and $2k+1$ steps along the top and bottom sides and along the middle are all $B$ tiles.
We now consider the middle of the half hexagon to be the position $0$ on the boundary (\textit{i.e.}, height $0$), and so this symmetry and indexing in terms of the corresponding GT patterns is precisely those described for $\Sp_{2k}$.
In particular, tilings of this quater hexagon give the character for the corresponding $\spn_{2k}$ representation by the natural bijection with the type $C_k$ Proctor pattern.

We can translate this symmetry to the NILPs on the rectangular grid as the lattice paths must stay below the antidiagonal.
This means the NILP corresponds to nonintersecting Dyck paths (which do not necessarily have to end on the antidiagonal) and we obtain a determinant of triangle Catalan numbers.
Hence, the combinatorial skew Howe duality is simply taking two different types of lattice paths on these lozenge tiling similar to the type A case.

\begin{ex}
\label{ex:quarter_hexagon}
We consider a tiling of the quarter hexagon for $n = 4$ with $k = 3$ and the corresponding Proctor pattern and NILP from~\cite[Thm.~3.8, Thm.~4.12]{OS19II}:
\[
\begin{tikzpicture}[scale=0.5,baseline=50]
\clip(-7,-0.5) rectangle (1,8.2);  
\foreach \y in {0,1,4,6}
  \draw[fill=blue!30] (0,\y) -- ++(30:1) -- ++(150:1) -- ++ (30:-1) -- cycle;
\foreach \y in {0,2,5}
  \draw[fill=blue!30] (150:1) ++ (0,\y) -- ++(30:1) -- ++(150:1) -- ++ (30:-1) -- cycle;
\foreach \y in {-1,1,3}
  \draw[fill=blue!30] (150:2) ++ (0,\y) -- ++(30:1) -- ++(150:1) -- ++ (30:-1) -- cycle;
\foreach \y in {-1,2}
  \draw[fill=blue!30] (150:3) ++ (0,\y) -- ++(30:1) -- ++(150:1) -- ++ (30:-1) -- cycle;
  \foreach \y in {-2,1}
  \draw[fill=blue!30] (150:4) ++ (0,\y) -- ++(30:1) -- ++(150:1) -- ++ (30:-1) -- cycle;
\draw[fill=blue!30] (150:5) ++ (0,-1) -- ++(30:1) -- ++(150:1) -- ++ (30:-1) -- cycle;
\draw[fill=blue!30] (150:6) ++ (0,-3) -- ++(30:1) -- ++(150:1) -- ++ (30:-1) -- cycle;
\draw[fill=green!30] (0,3)  -- ++(0,1) -- ++(150:1) -- ++ (0,-1) -- cycle;
\draw[fill=green!30] (150:1) ++ (0,4)  -- ++(0,1) -- ++(150:1) -- ++ (0,-1) -- cycle;
\draw[fill=green!30] (150:2) ++ (0,0)  -- ++(0,1) -- ++(150:1) -- ++ (0,-1) -- cycle;
\draw[fill=green!30] (150:4) ++ (0,0)  -- ++(0,1) -- ++(150:1) -- ++ (0,-1) -- cycle;
\draw[fill=green!30] (150:5) ++ (0,-2)  -- ++(0,1) -- ++(150:1) -- ++ (0,-1) -- cycle;
\foreach \y in {2,5,7}
  \draw[fill=red!30] (0,\y) -- ++(0,1) -- ++ (210:1) -- ++(0,-1) -- cycle;
\foreach \y in {1,3,6}
  \draw[fill=red!30] (150:1) ++ (0,\y) -- ++(0,1) -- ++ (210:1) -- ++(0,-1) -- cycle;
\foreach \y in {2,4,5}
  \draw[fill=red!30] (150:2) ++ (0,\y) -- ++(0,1) -- ++ (210:1) -- ++(0,-1) -- cycle;
\foreach \y in {0,1,3,4}
  \draw[fill=red!30] (150:3) ++ (0,\y) -- ++(0,1) -- ++ (210:1) -- ++(0,-1) -- cycle;
\foreach \y in {-1,2,3}
  \draw[fill=red!30] (150:4) ++ (0,\y) -- ++(0,1) -- ++ (210:1) -- ++(0,-1) -- cycle;
\foreach \y in {0,1,2}
  \draw[fill=red!30] (150:5) ++ (0,\y) -- ++(0,1) -- ++ (210:1) -- ++(0,-1) -- cycle;
\foreach \y in {-2,-1,0,1}
  \draw[fill=red!30] (150:6) ++ (0,\y) -- ++(0,1) -- ++ (210:1) -- ++(0,-1) -- cycle;
\end{tikzpicture}
\longleftrightarrow
\begin{array}{ccccccc}
3 && 2 && 0 \\
& 3 &&1 && 0 \\
&& 2 && 1 \\
&&& 2 && 0 \\
&&&& 2 \\
&&&&& 1
\end{array}
\longleftrightarrow
\begin{tikzpicture}[scale=0.48,baseline=50]
\foreach \x in {0,1,...,7} {
  \draw[-, gray, very thin] (\x,\x) -- (13,\x);
  \draw[-, gray, very thin] (\x,0) -- (\x,\x);
}
\foreach \x in {8,...,13}
  \draw[-, gray, very thin] (\x,0) -- (\x,7);
\draw[-, thick] (0,0) -- (7,7);
\draw[-, dashed] (3,3) -- (6,0);
\draw[-, dashed] (6.5,6.5) -- (13,0);
\foreach \x in {0,1,2,3}
  \draw[very thick, blue, dashed] (6-\x, \x) -- (7-\x, \x);
\draw[very thick, blue, line join=round]  (4,3) -- (4,4) -- (6,4) -- (6,5) -- (8,5);
\draw[very thick, blue, line join=round]  (5,2) -- (6,2) -- (6,3) -- (9,3) -- (9,4);
\draw[very thick, blue, line join=round]  (6,1) -- (10,1) -- (10,2) -- (11,2);
\draw[very thick, blue, line join=round]  (7,0) -- (13,0);
\foreach \i in {0,1,2,3} {
  \draw[very thick, red] (\i,\i) -- (6-\i,\i);
  \draw[fill=purple, color=purple] (6-\i, \i) circle (0.12);
}
\foreach \i in {0,1,2,3} {
  \draw[fill=red, color=red] (\i, \i) circle (0.12);
  \draw (\i, \i) node[anchor=east] {$s_{\i}$};
}
\foreach \i/\x in {3/1,2/2,1/4,0/6} {
  \draw[fill=purple, color=purple] (7+\x, 6-\x) circle (0.12);
  \draw (7+\x, 6-\x) node[anchor=south west] {$t_{\i}$};
}
\end{tikzpicture}
\]
Note that if we think of the lozenge tiling as a stacking of boxes in the corner with the $B$ tiles at height $0$ being the floor, the heights along the diagonals are the diagonals of the Proctor pattern.
It is easy to see this holds in general.
\end{ex}

For Theorem~\ref{thm:q_mult_dim_B} with $V(\fw_n)^{\otimes 2k}$ in type $B_n$, we cannot have full symmetry of the (half) hexagon.
Instead we consider lozenge tilings of the half hexagon that are \defn{almost symmetric}, where they are symmetric up to the middle row of hexagons, which are then forced to be either
\[
\begin{tikzpicture}[scale=0.8]
\draw[fill=blue!30] (0,1) -- ++(30:1) -- ++(150:1) -- ++ (30:-1) -- cycle;
\draw[fill=green!30] (0,0)  -- ++(0,1) -- ++(150:1) -- ++ (0,-1) -- cycle;
\draw[fill=red!30] (0,1) -- ++(30:1) -- ++(0,-1) -- ++(210:1) -- cycle;
\end{tikzpicture}
\hspace{60pt}
\begin{tikzpicture}[scale=0.8]
\draw[fill=blue!30] (0,0) -- ++(-30:1) -- ++(-150:1) -- ++ (-30:-1) -- cycle;
\draw[fill=green!30] (0,0)  -- ++(0,1) -- ++(-30:1) -- ++ (0,-1) -- cycle;
\draw[fill=red!30] (0,1) -- ++(210:1) -- ++(0,-1) -- ++(30:1) -- cycle;
\end{tikzpicture}
\]
There are $2^k$ such possible choices, where we take $k-1$ of them to correspond to the sign vector and the last to correspond to the parity (that is, taking $\mu + \omega_k$ or $\mu + \omega_{k-1}$).
The remaining part of the lozenge tiling corresponds to the King tableaux as in the case of a symmetric lozenge tiling.
This choice is also the difference between the type $C_k$ and $B_k$ Proctor patterns allowing the rightmost entries to be in $\frac{1}{2}\ZZ_{\geq 0}$.
So we can realize our combinatorial skew Howe duality as a full hexagon as for the $\gl_k$ case with as much symmetry as possible.

\begin{ex}
We consider the quarter hexagon from Example~\ref{ex:quarter_hexagon}, reflect it vertically to a type $C_4$ symmetric half hexagon, and then adjoin a type $B_3$ almost symmetric half hexagon:
\[
\iftikz
\begin{tikzpicture}[scale=0.5,baseline=50]
\foreach \y in {-6,-4,-1,0,1,4,6}
  \draw[fill=black!50] (0,\y) -- ++(150:1) -- ++(30:1) -- cycle;
\foreach \y in {-6,-3,-1,0,2,5}
  \draw[fill=blue!30] (150:1) ++ (0,\y) -- ++(30:1) -- ++(150:1) -- ++ (30:-1) -- cycle;
\foreach \y in {-5,-3,-1,1,3}
  \draw[fill=blue!30] (150:2) ++ (0,\y) -- ++(30:1) -- ++(150:1) -- ++ (30:-1) -- cycle;
\foreach \y in {-5,-2,-1,2}
  \draw[fill=blue!30] (150:3) ++ (0,\y) -- ++(30:1) -- ++(150:1) -- ++ (30:-1) -- cycle;
  \foreach \y in {-5,-2,1}
  \draw[fill=blue!30] (150:4) ++ (0,\y) -- ++(30:1) -- ++(150:1) -- ++ (30:-1) -- cycle;
\foreach \y in {-4,-1}
  \draw[fill=blue!30] (150:5) ++ (0,\y) -- ++(30:1) -- ++(150:1) -- ++ (30:-1) -- cycle;
\draw[fill=blue!30] (150:6) ++ (0,-3) -- ++(30:1) -- ++(150:1) -- ++ (30:-1) -- cycle;
\foreach \y in {-7,-5,-2,3}
  \draw[fill=green!30] (0,\y)  -- ++(0,1) -- ++(150:1) -- ++ (0,-1) -- cycle;
\foreach \y in {-7,-4,-2,4}
  \draw[fill=green!30] (150:1) ++ (0,\y)  -- ++(0,1) -- ++(150:1) -- ++ (0,-1) -- cycle;
\foreach \y in {-7,-6,-4,0}
  \draw[fill=green!30] (150:2) ++ (0,\y)  -- ++(0,1) -- ++(150:1) -- ++ (0,-1) -- cycle;
\foreach \y in {-7,-6,-4,-3}
  \draw[fill=green!30] (150:3) ++ (0,\y)  -- ++(0,1) -- ++(150:1) -- ++ (0,-1) -- cycle;
\foreach \y in {-7,-6,-3,0}
  \draw[fill=green!30] (150:4) ++ (0,\y)  -- ++(0,1) -- ++(150:1) -- ++ (0,-1) -- cycle;
\foreach \y in {-7,-6,-5,-2}
  \draw[fill=green!30] (150:5) ++ (0,\y)  -- ++(0,1) -- ++(150:1) -- ++ (0,-1) -- cycle;
\foreach \y in {-7,-6,-5,-4}
  \draw[fill=green!30] (150:6) ++ (0,\y)  -- ++(0,1) -- ++(150:1) -- ++ (0,-1) -- cycle;
\foreach \y in {-3,2,5,7}
  \draw[fill=red!30] (0,\y) -- ++(0,1) -- ++ (210:1) -- ++(0,-1) -- cycle;
\foreach \y in {-5,1,3,6}
  \draw[fill=red!30] (150:1) ++ (0,\y) -- ++(0,1) -- ++ (210:1) -- ++(0,-1) -- cycle;
\foreach \y in {-2,2,4,5}
  \draw[fill=red!30] (150:2) ++ (0,\y) -- ++(0,1) -- ++ (210:1) -- ++(0,-1) -- cycle;
\foreach \y in {0,1,3,4}
  \draw[fill=red!30] (150:3) ++ (0,\y) -- ++(0,1) -- ++ (210:1) -- ++(0,-1) -- cycle;
\foreach \y in {-4,-1,2,3}
  \draw[fill=red!30] (150:4) ++ (0,\y) -- ++(0,1) -- ++ (210:1) -- ++(0,-1) -- cycle;
\foreach \y in {-3,0,1,2}
  \draw[fill=red!30] (150:5) ++ (0,\y) -- ++(0,1) -- ++ (210:1) -- ++(0,-1) -- cycle;
\foreach \y in {-2,-1,0,1}
  \draw[fill=red!30] (150:6) ++ (0,\y) -- ++(0,1) -- ++ (210:1) -- ++(0,-1) -- cycle;
\foreach \y in {-7,-5,-3,-2,2,3,5,7}
  \draw[fill=black!50] (0,\y) -- ++(30:1) -- ++(150:1) -- cycle;
\foreach \y in {-5,-4,-2,1,3,5,6}
  \draw[fill=blue!30] (30:1) ++ (0,\y-1) -- ++(30:1) -- ++(150:1) -- ++ (30:-1) -- cycle;
\foreach \y in {-5,-4,-1,1,4,5}
  \draw[fill=blue!30] (30:2) ++ (0,\y-1) -- ++(30:1) -- ++(150:1) -- ++ (30:-1) -- cycle;
\foreach \y in {-4,-2,1,3,5}
  \draw[fill=blue!30] (30:3) ++ (0,\y-2) -- ++(30:1) -- ++(150:1) -- ++ (30:-1) -- cycle;
\foreach \y in {-4,-1,1,4}
  \draw[fill=blue!30] (30:4) ++ (0,\y-2) -- ++(30:1) -- ++(150:1) -- ++ (30:-1) -- cycle;
\foreach \y in {-3,0,4}
  \draw[fill=blue!30] (30:5) ++ (0,\y-3) -- ++(30:1) -- ++(150:1) -- ++ (30:-1) -- cycle;
\foreach \y in {-2,2}
  \draw[fill=blue!30] (30:6) ++ (0,\y-3) -- ++(30:1) -- ++(150:1) -- ++ (30:-1) -- cycle;
\draw[fill=blue!30] (30:7) ++ (0,-3) -- ++(30:1) -- ++(150:1) -- ++ (30:-1) -- cycle;
\foreach \y in {-6,-1,0,4}
  \draw[fill=green!30] (30:1) ++ (0,\y-1)  -- ++(0,1) -- ++(150:1) -- ++ (0,-1) -- cycle;
\foreach \y in {-2,3,6}
  \draw[fill=green!30] (30:2) ++ (0,\y-1)  -- ++(0,1) -- ++(150:1) -- ++ (0,-1) -- cycle;
\foreach \y in {-3,0,2,6}
  \draw[fill=green!30] (30:3) ++ (0,\y-2)  -- ++(0,1) -- ++(150:1) -- ++ (0,-1) -- cycle;
\foreach \y in {-2,3,5}
  \draw[fill=green!30] (30:4) ++ (0,\y-2)  -- ++(0,1) -- ++(150:1) -- ++ (0,-1) -- cycle;
\foreach \y in {2,3,5}
  \draw[fill=green!30] (30:5) ++ (0,\y-3)  -- ++(0,1) -- ++(150:1) -- ++ (0,-1) -- cycle;
\foreach \y in {-3,0,1,4}
  \draw[fill=green!30] (30:6) ++ (0,\y-3)  -- ++(0,1) -- ++(150:1) -- ++ (0,-1) -- cycle;
\foreach \y in {-1,0,3,4}
  \draw[fill=green!30] (30:7) ++ (0,\y-4)  -- ++(0,1) -- ++(150:1) -- ++ (0,-1) -- cycle;
\foreach \y in {1,2,3}
  \draw[fill=green!30] (30:8) ++ (0,\y-4)  -- ++(0,1) -- ++(150:1) -- ++ (0,-1) -- cycle;
\foreach \y in {-3,2,7}
  \draw[fill=red!30] (30:1) ++ (0,\y-1) -- ++(0,1) -- ++ (210:1) -- ++(0,-1) -- cycle;
\foreach \y in {-6,-3,0,2}
  \draw[fill=red!30] (30:2) ++ (0,\y-1) -- ++(0,1) -- ++ (210:1) -- ++(0,-1) -- cycle;
\foreach \y in {-5,-1,4}
  \draw[fill=red!30] (30:3) ++ (0,\y-2) -- ++(0,1) -- ++ (210:1) -- ++(0,-1) -- cycle;
\foreach \y in {-5,-3,0,2}
  \draw[fill=red!30] (30:4) ++ (0,\y-2) -- ++(0,1) -- ++ (210:1) -- ++(0,-1) -- cycle;
\foreach \y in {-4,-2,-1,1}
  \draw[fill=red!30] (30:5) ++ (0,\y-3) -- ++(0,1) -- ++ (210:1) -- ++(0,-1) -- cycle;
\foreach \y in {-4,-1,3}
  \draw[fill=red!30] (30:6) ++ (0,\y-3) -- ++(0,1) -- ++ (210:1) -- ++(0,-1) -- cycle;
\foreach \y in {-3,-2,2}
  \draw[fill=red!30] (30:7) ++ (0,\y-4) -- ++(0,1) -- ++ (210:1) -- ++(0,-1) -- cycle;
\foreach \y in {-3,-2,-1,0}
  \draw[fill=red!30] (30:8) ++ (0,\y-4) -- ++(0,1) -- ++ (210:1) -- ++(0,-1) -- cycle;
\end{tikzpicture}
\fi
\]
\end{ex}

Finally, we interpret Theorem~\ref{thm:mult_type_D} in terms of lozenge tilings similar to the previous case.
For the type $D_n$ case, we have symmetry in the $B$ tiles except for the middle $B$ tile, which is a direct translation of the Proctor pattern condition~\cite[Thm.~7.2]{Proctor94}.
Note that this allows for a greater amount of asymmetry than in a type $B_n$ lozenge tiling.
The paths along such tilings avoiding the $B$ tiles precisely correspond to the \emph{unfolded} NILPs.
In fact, this shows that Corollary~\ref{cor:q_mult_D_even} holds at $q = 1$.


\section{Limit shapes of Young diagrams}
\label{sec:limit-shapes-young}

In this section, we demonstrate how the limit shapes of the random Young diagrams with respect to the probability measure from skew Howe duality, see~\eqref{eq:GL_prob_measure} below, can be derived using a variational formulation of the limit shape.

Here we demonstrate that the limit shapes of partitions using the skew Howe duality measures of classical Lie groups are described by the same function, which is computed explicitly.
Therefore we need to use the parameters that are related in a certain way.
Furthermore, let $l$ be such that $n = 2l$ or $n = 2l + 1$ depending on if $n$ is even or odd.
Recall that skew Howe duality acts on the space $\bigwedge\left(\CC^{n}\otimes \CC^{k}\right)$, which has a natural action of the group $\GL_{n} \times \GL_{k}$ and a multiplicity-free decomposition given by~\eqref{eq:gl_gl_skew_Howe}.
This space also has an action of the Clifford algebra as discussed in Section~\ref{sec:skew_howe} with an invariant subspace $\bigwedge\left( \CC^{n}\otimes \CC^k \right)$.
The invariant subspace also has actions of
\begin{itemize}
\item $\SO_{2l+1}\times \Pin_{2k}$ for $n = 2l+1$,
\item $\Or_{2l}\times \SO_{2k}$ for $n=2l$, and
\item $\Sp_{2l}\times \Sp_{2k}$ for $n=2l$,
\end{itemize}
with multiplicity-free decompositions given by~\eqref{eq:so2np1},~\eqref{eq:so2nso2l}, and~\eqref{eq:spsp}, respectively, using (generalized) Young diagrams for the corresponding Lie groups.
Let ${\bf 1}_{[a,b]}(x)$ denote the indicator function, which is $1$ if $x \in [a,b]$ and $0$ otherwise.

\begin{thm}
  \label{thm:limit_shape_gl}
  The decomposition of $\bigwedge\left(\CC^{n}\otimes \CC^{k}\right)$ gives rise to the probability measure
\begin{subequations}
\label{eq:prob_measures}
\begin{equation}
  \label{eq:GL_prob_measure}
  \mu_{n,k}(\lambda)=\frac{\dim V_{\GL_{n}}(\lambda)\cdot\dim V_{\GL_{k}}(\overline{\lambda}')}{2^{nk}}, 
\end{equation}
for the action of $GL_{n}\times GL_{k}$ and, for $k$ even, $\bigwedge\left(\CC^{n}\otimes \CC^{k/2}\right)$ gives rise to
\begin{equation}
  \label{eq:other_prob_measure}
  \mu_{n,k/2}(\lambda)=\frac{\dim V_{G_{1}}(\lambda)\cdot\dim V_{G_{2}}(\overline{\lambda}')}{2^{nk/2}},
\end{equation}
\end{subequations}
for the actions of $\SO_{2l+1}\times \Pin_{k}$ for $n=2l+1$, $\Or_{2l} \times \SO_{k}$ for $n=2l$, and $\Sp_{2l} \times \Sp_{k}$ for $n=2l$.

  Let $f_{n}$ denote the upper boundary of a Young diagram in a decomposition, rotated and scaled by $\frac{1}{n}$ as in Figure~\ref{fig:gl-rotated-diagram} and regarded as a function $f_{n}(x)$ of $x\in [0,c+1]$.
  As  $n\to\infty,\; k\to\infty$ in such a way that $c=\lim_{n,k\to\infty}\frac{k}{n}=\mathrm{const}$ and $\frac{k}{n}=c+\mathcal{O}\left(\frac{1}{n}\right)$, the functions $f_n$ converge in probability with respect to the probability measure~\eqref{eq:prob_measures} in the supremum norm $\Abs{\cdot}_{\infty}$ to the limiting shape given by the formula
  \begin{equation}
    \label{eq:limit-shape-for-f}
    f(x) = \begin{cases}
    \displaystyle 1 + \int_{0}^{x} \bigl( 1-2\rho(t) \bigr) \dt & \text{if } c \geq 1,
    \\[10pt]
    \displaystyle 1 + \int_{0}^{x} \bigl( 2\rho(t)-1 \bigr) \dt & \text{if } c < 1,
    \end{cases}
  \end{equation}
 where the limit density $\rho(x)$ is written explicitly as
\begin{equation}
  \label{eq:limit-shape-gl}
  \rho(x)=
      \frac{{\bf 1}_{[-\sqrt{c},\sqrt{c}]}(\widetilde{x})}{2\pi} \left[
      \arctan \left(\frac{-(c+1) \widetilde{x}+2c}{(c-1) \sqrt{c-\widetilde{x}^2}}\right)+
      \arctan\left(\frac{ (c+1) \widetilde{x} +2c}{(c-1)
          \sqrt{c-\widetilde{x}^2}}\right)\right],
  \end{equation}
  where $\widetilde{x} = x-\frac{c+1}{2}$,
 for~\eqref{eq:GL_prob_measure} and with a shifted argument $\rho\bigl(x+\frac{c+1}{2}\bigr)$ such that $x \in [0, \frac{c+1}{2}]$ for~\eqref{eq:other_prob_measure}.
\end{thm}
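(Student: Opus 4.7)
The plan is to convert the problem into the asymptotic analysis of a discrete determinantal point process and then identify the limiting empirical density explicitly. First, I would rewrite the probability measure $\mu_{n,\kappa}(\lambda)$ in terms of the particle configuration $\{a_i = \lambda_i + n - i\}_{i=1}^n \subset \{0, 1, \dotsc, n+\kappa-1\}$. Using the product formula of Theorem~\ref{thm:q_mult_dim_A} at $q=1$ together with the Weyl dimension formula for $\GL_n$, the measure factors as $\mu_{n,\kappa}(\lambda) \propto \prod_{i<j}(a_i - a_j)^2 \prod_i w(a_i)$ where $w$ is a binomial-type weight. This is precisely the Krawtchouk ensemble with parameter $p = 1/2$ (as noted in the introduction), a determinantal point process whose correlation kernel $K_{n,\kappa}$ is expressible via Krawtchouk polynomials and satisfies the Christoffel--Darboux identity.

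Next, I would invoke the standard asymptotic analysis for the Krawtchouk ensemble (\textit{cf.}~\cite{Johansson01,BKMM03}) to obtain the limit density of particles. Under the scaling $n, \kappa \to \infty$ with $\kappa/n \to c$, rescaling the particle positions by $1/n$ places them in an interval $[0, c+1]$ and the normalized one-point function converges to a limit density $\rho(x)$. To compute $\rho$ explicitly, I would apply the steepest descent / saddle-point method to the integral representation of the Christoffel--Darboux kernel, where the saddle points solve a quadratic equation whose discriminant is proportional to $c - \widetilde{x}^2$ with $\widetilde{x} = x - (c+1)/2$. Inside the band $[-\sqrt{c}, \sqrt{c}]$ the two saddles are complex conjugates, and the density is the argument of the complex saddle, which after simplification yields the sum of two arctangents in~\eqref{eq:limit-shape-gl}; outside the band $\rho \equiv 0$ or $1$.

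To pass from particle density to the limit shape function $f(x)$, I would use the Russian-style parametrization of the rotated Young diagram: each horizontal (resp.\ vertical) unit step of the boundary corresponds to an absent (resp.\ present) particle, so the derivative of $f$ equals $1 - 2\rho$ on the region where $\rho \leq 1/2$ and $2\rho - 1$ otherwise, giving the two cases of~\eqref{eq:limit-shape-for-f} depending on whether $c \geq 1$ or $c < 1$. To upgrade pointwise convergence of the density to uniform convergence in probability, I would apply the standard variance bound for linear statistics of determinantal processes: for any interval $[a,b]$, $\operatorname{Var}(\#\text{particles in }[a,b]) = O(\log n)$, so Chebyshev's inequality controls the deviation of $f_n$ from its mean uniformly on a fine enough mesh of $[0, c+1]$, and Lipschitz continuity of $f_n$ (each step has size $1/n$) fills in between mesh points.

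Finally, for the remaining pairs $(\SO_{2l+1}, \Pin_{\kappa})$, $(\SO_{2l}, \Or_{\kappa})$, and $(\Sp_{2l}, \Sp_{\kappa})$, I would use the product formulas of Theorem~\ref{thm:q_prod_B_even} and Theorem~\ref{thm:mult_type_D} at $q = 1$. These product formulas differ from the $\GL$ case by the presence of factors $[a_i + a_j]$ and by the weight $w(a)$; combinatorially this corresponds to the lozenge-tiling symmetries of Section~\ref{sec:lozenge} and realizes the measure as a Krawtchouk-type ensemble with an additional reflection symmetry about $a = 0$. The resulting process on half-integers $\{a_i\} \subset \{0, \dotsc, l+\kappa/2\}$ is again determinantal, and the same saddle-point analysis applies; the key observation is that the preserved branching symmetry means the limit density is exactly the restriction of the $\GL$ density to the right half-band, yielding the shifted density $\rho\bigl(x + \tfrac{c+1}{2}\bigr)$ on $[0, \tfrac{c+1}{2}]$. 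The main obstacle I expect is the careful derivation of the explicit arctangent formula from the saddle-point integral and verifying that the error terms in the steepest descent analysis are uniform enough to deduce sup-norm convergence; handling the symmetric ensembles will require checking that the additional $[a_i+a_j]$ factors do not affect the leading-order asymptotics beyond restricting the support.
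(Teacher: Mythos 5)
Your proposal is correct in outline, but it takes a genuinely different analytic route from the paper for the $(\GL_n,\GL_\kappa)$ case. The paper never touches correlation kernels: it writes the measure in the log-gas form~\eqref{eq:measure_factorized}, expresses the probability of a configuration as $\exp\bigl(-n^2 J[\rho_n]+\bigO(n\ln n)\bigr)$, and solves the resulting variational problem~\eqref{eq:J_functional} for the equilibrium density by converting the Euler--Lagrange equation into a Riemann--Hilbert problem for the Stieltjes transform, solved by the Plemelj formula (Lemma~\ref{lemma:limit-shape-function}); the arctangent density and the band edge $a=\sqrt{c}$ come out of that computation, with the $c<1$ case handled by the saturation substitution $\widetilde{\rho}=1-\rho_1$ in~\eqref{eq:47}. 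Convergence is then obtained not from variance bounds but from a large-deviation estimate: the quadratic part $Q$ of $J$ defines a norm, $\Abs{f}_\infty \leq C_1 Q[f]^{1/4}$ via~\cite[Lemma 1.21]{romik2015surprising}, each deviating diagram has probability at most $e^{-n^2\varepsilon^2+\bigO(n\ln n)}$, and a Hardy--Ramanujan count of diagrams finishes the argument. Your plan replaces (i) the equilibrium-measure computation by steepest descent on the Krawtchouk Christoffel--Darboux kernel \`a la~\cite{johansson2002non,borodin2007asymptotics}, and (ii) the concentration step by the determinantal variance bound plus Chebyshev and Lipschitz interpolation. Both are valid; your route buys local (kernel-level) information and leans on known Krawtchouk asymptotics, while the paper's route buys weight-flexibility --- it needs only Stirling's formula on the single-particle weights, which is exactly why the same argument transfers verbatim to the other three series --- and a stronger, exponential concentration rate.

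Two soft spots you should be aware of. First, for the pairs $(\SO_{2l+1},\Pin_\kappa)$, $(\Sp_{2l},\Sp_\kappa)$, $(\Or_{2l},\SO_\kappa)$ the paper explicitly notes that the measures~\eqref{eq:so2p1_coord_measure}--\eqref{eq:so2n_coord_measure} are \emph{not} the Krawtchouk ensemble, so ``the same saddle-point analysis'' is not off-the-shelf: the weights there are not Krawtchouk weights, and the kernels would have to be built from a different orthogonal polynomial family (Chebyshev-like, in the language of Section~\ref{sec:limit_shapes_poly_ensembles} and~\cite{Cuenca18}) before any steepest descent can be run. Your reflection-symmetry observation is the right structural reason the answer is half of the $\GL$ shape --- it is precisely the paper's doubling trick $a_{2l+1-i}\equiv-a_i$ --- but to make it a proof in your framework you must either construct those kernels or fall back on the equilibrium-measure method for the doubled log-gas, which is what the paper does using the product formulas of Theorems~\ref{thm:q_prod_B_even} and~\ref{thm:mult_type_D}. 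Second, your criterion for the two cases in~\eqref{eq:limit-shape-for-f} (``where $\rho\leq 1/2$ versus otherwise'') is not the correct mechanism: the dichotomy is global in $c$, not local in $x$, and it arises from the constraint $\rho\leq 1$ saturating when $c<1$ (particle--hole exchange), equivalently from interchanging the roles of $n$ and $\kappa$; in the kernel picture the true one-point density always satisfies $f'=1-2\rho_{\mathrm{true}}$, and the case split only reflects how the closed formula~\eqref{eq:limit-shape-gl} is written. Neither issue is fatal, but both require real work to close.
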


Note that the limit shape of the diagrams for the special orthogonal and symplectic groups is a ``half'' of the limit shape for the general linear groups (see Figures~\ref{fig:soo-over-gl} and~\ref{fig:sp-over-gl}).
The case $c=1$ corresponds to a constant solution $\rho(x)=\frac{1}{2}$, for $x\in[-1,1]$ and a triangular limit shape.
A heuristic for this is we are considering a charged line without any external field with an equal number of holes and charges, so the charges will want to spread out uniformly.
Another way to see why this is expected is that the measure has rotational symmetry and it wants to avoid partitions with nonconstant slopes as it behaves very roughly like $x(1-x)$ on $x \in [0,1]$ (\textit{cf.}\ Figure~\ref{fig:rho} below).

The proof of the theorem is presented case by case in the following Sections \ref{sec:limit-shape-gl_n}, \ref{sec:limit-shape-series-B}, \ref{sec:limit-shape-series-C}, \ref{sec:limit-shape-series-D}.
We demonstrate the derivation of the limit shape in the $\GL_{n}\times \GL_{k}$ Section~\ref{sec:limit-shape-gl_n}.

\subsection{Limit shape for \texorpdfstring{$(\GL_{n},\GL_{k})$}{(GL(n), GL(k))} skew Howe duality}
\label{sec:limit-shape-gl_n}

For a partition $\lambda$, we will use the coordinates $a_{i} := \lambda_{i}+n-i$, which correspond to the rotated diagram as demonstrated in the Figure~\ref{fig:gl-rotated-diagram}.
To derive the limit shape~\eqref{eq:limit-shape-gl}, we write the probability measure in the form
\begin{equation}
  \label{eq:measure_factorized}
  \mu_{n,k}(\{a_{i}\}) = C_{n,k} \prod_{i< j} (a_{i}-a_{j})^{2} \times \prod_{l} W(a_{l}),
\end{equation}
as the product of the square of the Vandermonde determinant and the product of single variable dependent weights:
\begin{align}
  \mu_{n,k}(\lambda) & = \frac{\dim V_{n}(\lambda)\cdot\dim V_{k}(\overline{\lambda}')}{2^{nk}}=\frac{M^k(\lambda)\dim V_{n}(\lambda)}{2^{nk}}= \nonumber \\
  & = \displaystyle\prod_{m=0}^{n-1} \frac{ (k+m)!}{2^{k} m!(k+n-1)!}\times\displaystyle\prod_{1 \leq i < j \leq n} (a_{i}-a_{j})^{2}\times \displaystyle \prod_{i=1}^{n}\frac{(k+n-1)!}{ a_{i}! (k+n-1  - a_{i})!},  \nonumber \\
  & = \displaystyle\prod_{m=0}^{n-1} \frac{ (k+m)!}{2^{k} m!(k+n-1)!}\times\displaystyle\prod_{1 \leq i < j \leq n} (a_{i}-a_{j})^{2}\times \displaystyle \prod_{i=1}^{n}\binom{k+n-1}{a_{i}},
  \label{eq:gln-measure-krawtchouk}
\end{align}
where we have used the Weyl dimension formula
\begin{equation}
\label{eq:weyl_dim}
\dim V_{n}(\lambda)=\frac{\prod_{i<j}(\lambda_{i}-\lambda_{j}+j-i)}{\prod_{m=0}^{n-1}m!}.
\end{equation}
In this form the probability measure $\mu_{n,k}(a_{1}, \dotsc, a_{n})$ is the measure for the configurations of the Krawtchouk polynomial ensemble (\textit{cf.}~\cite[Lemma~5.1]{borodin2007asymptotics}), since the weights are given by the binomial coefficients and the asymptotic results of \cite{johansson2002non} can be applied.
Nevertheless, we will see that the measures $\mu_{n,k/2}(\lambda)$ for Lie groups of series $\SO_{2l+1}$, $\Sp_{2l}$, $\SO_{2l}$ do not exactly coincide with the Krawtchouk ensemble (see~\eqref{eq:so2p1_coord_measure},~\eqref{eq:sp_coord_measure}, and~\eqref{eq:so2n_coord_measure}).
Therefore, in this subsection we present a method for derivation of the limit shape for $(\GL_{n},\GL_{k})$ that can then be applied with slight modifications to other classical series of simple Lie groups in Sections~\ref{sec:limit-shape-series-B}, \ref{sec:limit-shape-series-C}, \ref{sec:limit-shape-series-D}.

We are interested in the limit $n,k\to\infty$ such that $\frac{k}{n} = c+\mathcal{O}\left(\frac{1}{n}\right)$.
In this case $\GL_{n}$ and $\GL_{k}$ appear in the same way, so without loss of generality we can assume that $k > n$.
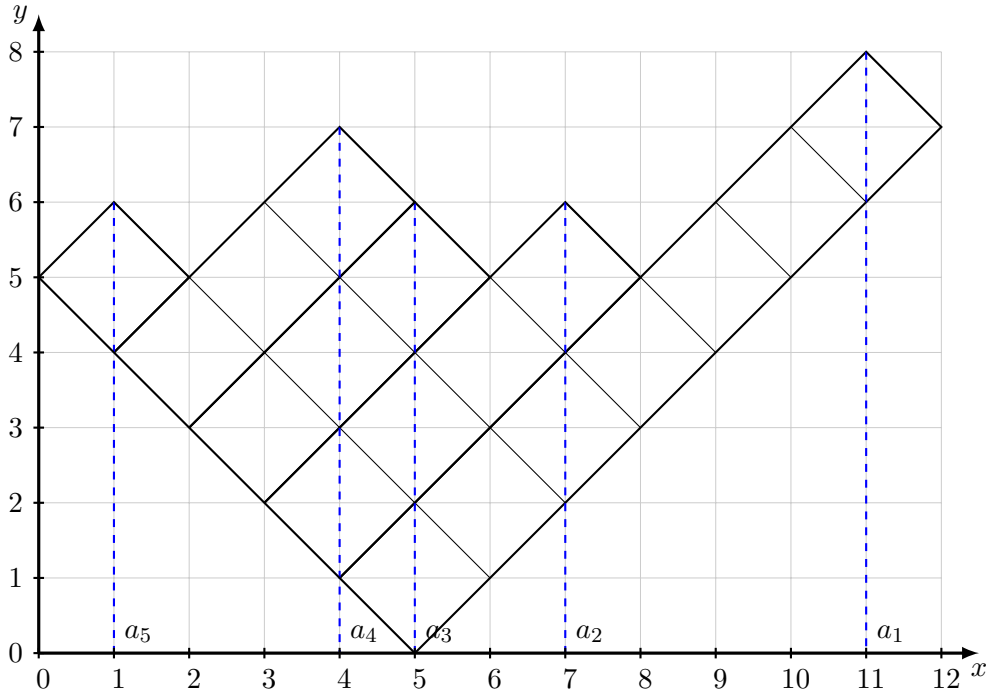
\begin{figure}[t]
\[
\iftikz
  \begin{tikzpicture}[baseline=10, scale=1.0]
    \newcount\n
  \n=5;
\draw[gray!40, very thin] (0,0) grid (12,8);  
  
\foreach \y [count=\i from 0] in {7,4,3,3,1} {
  \draw[black, thick] (\n-\i,\i) -- (\n+\y-\i,\i+\y)--(\n-1+\y-\i,\i+1+\y)--
  (\n-1-\i,\i+1);
  \tikzmath{
    integer \k;
    \k=\i+1;
  }
   \draw[blue, thick, dashed] (\n-1+\y-\i,\i+1+\y) -- (\n-1+\y-\i,0) node[anchor=south west, black] {$a_{\k}$};
  \foreach \len in {0,...,\y} {
    \draw[black,thin] (\n-\i+\len,\i+\len) -- (\n-1-\i+\len,\i+1+\len);
  }
}
\draw[black, thick] (\n,0)--(0,\n);
\tkzInit[xmin=0,xmax=12,ymin=0,ymax=8]
\tkzAxeXY[very thick]
\end{tikzpicture}
\fi
\]
\caption{Rotated diagram for $\GL_{5}$, coordinates $a_{i}=\lambda_{i}+n-i$
  correspond to the left boundaries of the intervals, where the upper boundary
  as a function $f_{n}$ is decreasing.}
\label{fig:gl-rotated-diagram}
\end{figure}
Rescale the coordinates as $x_{i}=\frac{a_{i}}{n}=\frac{\lambda_{i}+n-i}{n}$ and regard the upper boundary of the rotated diagram as a piecewise-linear function $f_{n}(x)$, so $f_{n}'(x) = \pm 1$ for $x\neq \frac{j}{n}, j\in \ZZ$.
To derive the limit shape it is convenient to consider the diagram as a particle configuration with particle coordinates $\{x_{i}\}_{i=1}^{n}$.
Introduce the piecewise constant function $\rho_{n}(x)=\frac{1}{2}(1-f_{n}'(x))$ that is equal to zero on an interval of the length $\frac{1}{n}$ if there is no particle in the left boundary of the interval and is equal to $1$ if there is a particle.
Then $\rho_{n}(x)$ can be called particle density.
The convergence of the diagrams to the limit shape leads to the convergence of particle density functions $\rho_{n}$ to a limit particle density $\rho(x)$, where the limit density $\rho(x)$ is connected to a derivative of limit function $f(x)$ of the diagrams by the formula
\[
  f'(x)=1-2\rho(x).
\]
The limit shape can be recovered from the explicit expression for
$\rho(x)$ by the formula
\begin{equation}
 \label{eq:limit_density}
  f(x)=1+\int_{0}^{x}(1-2\rho(t))\dt.
\end{equation}
It is more convenient to solve the variational problem for the limit density $\rho(x)$.

The probability of a configuration $\{x_{i}\}_{i=1}^{n}$ can be written as an exponent of a functional $J[\rho_{n}]$:
\[
  \mu_{n,k}(\{x_{i}\}) = \frac{1}{Z_{n}} \exp\bigl( -n^{2} J[\rho_{n}] + \bigO(n\ln n) \bigr),
\]
where
\begin{equation}
  \label{eq:limit-shape-functional}
  J[\rho_{n}] = \int_{0}^{c+1}\int_{0}^{c+1} \rho_{n}(x)\rho_{n}(y) \ln\abs{x-y}^{-1}\dx\dy+\int_{0}^{c+1}\rho_{n}(x)\; V(x)\dx
\end{equation}
and the normalization constant $Z_{n}$ does not depend on $\{x_{i}\}$.

We omit the computation of the normalization constant and the estimate of the next order term $\mathcal{O}(n\ln n)$, which are straightforward and completely parallel to the computations for $\SO_{2n+1}$ presented in~\cite[Lemmas 1,2]{NNP20}. 
The potential $V(x)$ appears from the use of Stirling formula for the factorials in Equation~\eqref{eq:gln-measure-krawtchouk} and has the form
\[
  V(x)=x\ln x+(c+1-x)\ln (c+1-x).
\]
By~\cite[Thm.~2.1]{Johansson-1998} and arguments similar to~\cite[Thm.~6.27]{deift1999orthogonal} the functional is strictly convex and the minimizer is unique by an analogue of~\cite[Thm.~6.132]{deift1999orthogonal} (see also \cite[Thm.~1.16]{romik2015surprising}).
The minimizer is constructed explicitly in the following lemma.

\begin{lemma}
\label{lemma:limit-shape-function}
The minimizer of the functional~\eqref{eq:limit-shape-functional} is given by the formula~\eqref{eq:limit-shape-gl}.
\end{lemma}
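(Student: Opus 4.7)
The strategy is the equilibrium-measure approach for log-gases with a hard wall. The admissible class $\mathcal{K} = \{\rho\colon [0,c+1]\to[0,1] : \int_0^{c+1}\rho(x)\dx = 1\}$ is convex, and the functional $J$ is strictly convex on $\mathcal{K}$: the logarithmic kernel is positive definite on differences of admissible densities (which have zero total mass), while the potential term is affine in $\rho$. By lower semicontinuity, a unique minimizer $\rho^{\star}$ exists, so it suffices to verify that the density in~\eqref{eq:limit-shape-gl} satisfies the optimality conditions.

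Differentiating the first variation of $J$, the minimizer solves the singular integral equation
\begin{equation}
\label{eq:EL-plan}
2\operatorname{p.v.}\!\int_0^{c+1}\!\frac{\rho^{\star}(y)}{x-y}\dy = V'(x) = \ln\frac{x}{c+1-x}
\end{equation}
on the bulk $B := \{x\in[0,c+1] : 0<\rho^{\star}(x)<1\}$, complemented by the condition $\rho^{\star}(x)\in\{0,1\}$ on $[0,c+1]\setminus B$. The $x\mapsto c+1-x$ symmetry of $V$ and the normalization $\int\rho^{\star}=1$ force the ansatz that $B$ is an interval $[\alpha,\beta]$ symmetric about $\tfrac{c+1}{2}$. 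For $c\ge 1$, $\rho^{\star}$ vanishes outside $[\alpha,\beta]$; for $c<1$ it saturates at $1$ on the complement, and~\eqref{eq:EL-plan} is effectively solved by the hole density $1-\rho^{\star}$, which produces the sign flip in~\eqref{eq:limit-shape-for-f}.

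To solve~\eqref{eq:EL-plan}, I will introduce the Stieltjes transform $G(z)=\int_\alpha^\beta \rho^{\star}(y)/(z-y)\dy$, analytic on $\CC\setminus[\alpha,\beta]$, with boundary relations $G(x+i0)+G(x-i0)=V'(x)$ and $G(x+i0)-G(x-i0)=-2\pi i\rho^{\star}(x)$ on $(\alpha,\beta)$. The Joukowski map $z-\tfrac{c+1}{2}=\sqrt{c}\cdot\tfrac{1}{2}(w+w^{-1})$ sends the exterior of $[\alpha,\beta]$ bijectively onto $\{|w|>1\}$, and the two logarithmic singularities of $V'$ at $x=0$ and $x=c+1$ lift to distinguished real points $w_{\pm}$ outside the unit circle. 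The required $G$ is then reconstructed as an explicit combination of $\ln(w-w_{\pm})$ terms; reading off the imaginary part on $|w|=1$ and transporting back to the $x$-variable yields the two arctangents in~\eqref{eq:limit-shape-gl}. The normalization $\int\rho^{\star}=1$, extracted from the expansion $G(z) = 1/z + O(z^{-2})$ at infinity, fixes $\alpha = \tfrac{c+1}{2}-\sqrt{c}$ and $\beta = \tfrac{c+1}{2}+\sqrt{c}$.

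The main technical obstacle is the careful tracking of logarithmic branches under the Joukowski uniformization, especially in the $c<1$ regime where the sign of $c-1$ inside the arctangent arguments records the interchange of particle and hole descriptions. Once $\rho^{\star}$ is identified with~\eqref{eq:limit-shape-gl}, the representation $f(x) = 1 + \int_0^x \bigl(1-2\rho^{\star}(t)\bigr)\dt$ (respectively $1 + \int_0^x \bigl(2\rho^{\star}(t)-1\bigr)\dt$ for $c<1$) furnishes the limit shape~\eqref{eq:limit-shape-for-f}, and uniqueness of the minimizer completes the proof.
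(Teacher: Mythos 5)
Your proposal is correct and follows the same overall architecture as the paper's proof: pass to the Euler--Lagrange/electrostatic equilibrium equation, resolve it through the complex (Stieltjes/Hilbert) transform as a scalar Riemann--Hilbert problem with additive jump $G_+ + G_- = V'$ on the bulk, fix the endpoints $\tfrac{c+1}{2}\mp\sqrt{c}$ from the $G(z)\sim 1/z$ normalization at infinity, and treat $c<1$ by the particle--hole substitution $\rho \mapsto 1-\rho$ with total mass $c$. Where you genuinely differ is in the two bookends. First, you open with a strict-convexity and lower-semicontinuity argument giving existence and uniqueness of the minimizer; the paper never addresses uniqueness and simply constructs a solution of the first-order condition \eqref{eq:electrostatic_equilibrium}, so your addition closes a logical step the paper leaves implicit (a solution of the stationarity conditions is automatically the minimizer because $J$ is convex on the admissible class). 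Second, for the explicit solution of the Riemann--Hilbert problem you propose the Joukowski uniformization of the exterior of the cut onto $\{\abs{w}>1\}$, lifting the logarithmic singularities of $V'$ at $x=0$ and $x=c+1$ to real points $w_{\pm}$ and reconstructing $G$ from $\ln(w-w_{\pm})$ terms, whereas the paper divides $G$ by $\sqrt{z^2-a^2}$ to convert the additive jump into a standard one, applies the Plemelj formula, and then evaluates the resulting singular integrals by Hilbert-transform duality (the adjoint identity applied to the indicator function ${\bf 1}_{[-(c+1)/2,(c+1)/2]}$) together with explicit antiderivatives. Both devices are standard and lead to the same arctangent density \eqref{eq:limit-shape-gl}; the paper's route extracts the endpoint equation \eqref{eq:relating_a_to_c} from a closed-form primitive, while yours trades those integral evaluations for the bookkeeping of logarithmic branches on the unit circle, which is the computation your sketch still defers and would need to be carried out in full to constitute a complete proof.
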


\begin{proof}
If we shift the coordinates as $\widetilde{x}=x-\frac{c+1}{2}$ and introduce the function $\widetilde{\rho}_{n}(\widetilde{x})=\rho_{n}(x)$,  we can make the functional invariant with respect to the sign flip $\widetilde{x}\to -\widetilde{x}$:
\begin{equation}
\label{eq:J_functional}
\begin{aligned}
  J[\widetilde{\rho}_{n}] &= \int_{-\frac{c+1}{2}}^{\frac{c+1}{2}}\int_{-\frac{c+1}{2}}^{\frac{c+1}{2}}\widetilde{\rho}_{n}(x)\widetilde{\rho}_{n}(y)\ln\abs{x-y}^{-1}\dx\dy \\
  & \hspace{20pt} +\int_{-\frac{c+1}{2}}^{\frac{c+1}{2}}\widetilde{\rho}_{n}(x)\left[\left(\frac{c+1}{2}-x\right)\ln\left(\frac{c+1}{2}-x\right)+\left(x+\frac{c+1}{2}\right)\ln\left(x+\frac{c+1}{2}\right)\right]\dx.
\end{aligned}
\end{equation}
Now we need to find a minimizer in the class of the even functions $\widetilde{\rho}(x)$ such that $\abs{\widetilde{\rho}(x)}< 1$ for any $x$ with the normalization condition
\begin{equation}
  \label{eq:12}
  \int_{-\frac{c+1}{2}}^{\frac{c+1}{2}}\widetilde{\rho}(x)\dx=1.
\end{equation}
We remark that this is a consequence that we have $n = \ell(\lambda) = \lambda_1'$ particles.

Assume that the minimizer $\widetilde{\rho}$ is supported on an interval $[-a,a]$.
Taking the variation by $\widetilde{\rho}$ and redefining the potential as $\widetilde{V}(\widetilde{x})=\frac{1}{2}V(x)$, we obtain an Euler--Lagrange equation for $x \in \supp \widetilde{\rho}$:
  \begin{equation}
    \label{eq:euler_lagrange_x}
    \int_{-a}^{a}\ln|x-y|^{-1}\widetilde{\rho}(y)\dy +\widetilde{V}(x)=\mathrm{const}.
  \end{equation}
To write the solution we take the derivative of Equation~\eqref{eq:euler_lagrange_x} and arrive at the electrostatic equilibrium condition
\begin{equation}
  \label{eq:electrostatic_equilibrium}
  -\int_{-a}^{a}\frac{\widetilde{\rho}(y)\dy}{y-x}+\widetilde{V}'(x)=0.
\end{equation}
Then we denote the Hilbert transform of $\widetilde{\rho}(x)$ by
\[
   G(z) := -i\int_{-a}^{a}\frac{\widetilde{\rho}(y)}{y-z}\dy,
\]
which can be defined on any complex number $z \in \CC$.
In the sequel, we will have $z$ denoting a complex number and $x$ being a real number.
Note that $G(z)$ is analytic on $\CC \setminus [-a,a]$ with limit values given by
\begin{align*}
  G_{\pm}(x) & = \lim_{\varepsilon\to 0}\frac{1}{i}\int\frac{\widetilde{\rho}(y)\dy}{y-(x\pm i\varepsilon)}
  = \lim_{\varepsilon\to 0}\frac{1}{i}\int\frac{y-x\pm i\varepsilon}{(y-x)^2+\varepsilon}\widetilde{\rho}(y)\dy
  \\ & =-i \pv \int \frac{\widetilde{\rho}(y)\dy}{y-x}\pm\pi\widetilde{\rho}(x),
\end{align*}
where we have used $\dfrac{\varepsilon}{\pi(x^2+\varepsilon^2)}\to\delta(x)$.
Thus we arrive at
\[
  G_{\pm}(x)=\pm\pi\widetilde{\rho}(x)+i\widetilde{V}'(x),
\]
so on the support of $\widetilde{\rho}(x)$ we have
\begin{equation}
  \label{eq:limit_points_sum}
  G_+(x)+G_-(x)=2i\widetilde{V}'(x),\qquad x\in[-a,a],
\end{equation}
and outside of $[-a,a]$ the following conditions appear
\begin{subequations}
\label{eq:hilbert_transform_cond}
\begin{align}
    & G_+(x)-G_-(x)=0, \qquad x \notin [-a,a],\\
    & G(z) \to 0, \qquad \text{as } z \to \infty.
  \end{align}
\end{subequations}
Now we have a Riemann--Hilbert problem for $G(z)$, but the condition~\eqref{eq:limit_points_sum} is in a non-standard form with the sum instead of a difference.
We need to redefine $G$ in such a way  as to obtain a standard problem that can be solved by the Plemelj formula~\cite{deift1999orthogonal}:
\[
  \widetilde{G}(z) := \frac{G(z)}{\sqrt{z^2-a^2}}.
\]
Then we get
\begin{align*}
  \widetilde{G}_+(x)-\widetilde{G}_-(x) & = \frac{G_+(x)}{\left(\sqrt{x^2-a^2}\right)_+}-\frac{G_-(x)}{\left(\sqrt{x^2-a^2}\right)_-} 
  = \frac{G_+(x)+G_-(x)}{\left(\sqrt{x^2-a^2}\right)_+}=\frac{2i\widetilde{V}'(x)}{\left(\sqrt{x^2-a^2}\right)_+},
\end{align*}
where the branch of the square root changes the sign crossing the real line
\[
\left(\sqrt{x^2-a^2}\right)_+=-\left(\sqrt{x^2-a^2}\right)_-,\qquad x\in[-a,a].
\]
The conditions~\eqref{eq:hilbert_transform_cond} are preserved for $\widetilde{G}$:
\begin{align*}
  & \widetilde{G}_+(x)-\widetilde{G}_-(x)=0, \qquad x \notin[-a,a], \\
  & \widetilde{G}(z)\to 0 \qquad \text{as } z\to\infty.
\end{align*}
Then $\widetilde{G}(z)$ is a solution of the standard Riemann--Hilbert problem and is given by the Plemelj formula
\begin{align*}
  \widetilde{G}(z) & = \frac{1}{2\pi i}\int_{-a}^a\frac{2i\widetilde{V}'(s)\ds}{\left(\sqrt{s^2-a^2}\right)_+(s-z)}, \\
  G(z) & = \frac{\sqrt{z^2-a^2}}{\pi }\int_{-a}^a\frac{\widetilde{V}'(s)\ds}{\left(\sqrt{s^2-a^2}\right)_+(s-z)}.
\end{align*}

To find the support of $\widetilde{\rho}$, we need to consider the asymptotics of $G(z)$ as $z\to\infty$.
We expand the above expression into series:
\begin{equation}
  \label{eq:G_expansion}
  G(z) = \frac{z+\cdots}{\pi} \left(-\frac{1}{z}\right) \int_{-a}^a\frac{\widetilde{V}'(s)}{\left(\sqrt{s^2-a^2}\right)_+}\left(1+\frac{s}{z}+\cdots\right) \ds.
\end{equation}
Consider the first term in the series.
For $G(z) \to 0$ as $z\to\infty$ we need to have
\[
  \int_{-a}^a\frac{\widetilde{V}'(s)}{\left(\sqrt{s^2-a^2}\right)_+} \ds = 0,
\]
which is automatically satisfied since $\widetilde{V}(x)$ is an even function and $\widetilde{V}'(s)$ is an odd function.
At the same time 
\[
  G(z)=-i\int\frac{\widetilde{\rho}(y) \dy}{y-z}\simeq\frac{i}{z}\int\widetilde{\rho}(y)\dy+ \bigO\left(\frac{1}{z^2}\right),
\]
and comparing it to the second term in the series~\eqref{eq:G_expansion} we arrive at
\begin{equation}
\label{eq:series_second_term}
  -\frac{1}{\pi} \int_{-a}^{a}\frac{\widetilde{V}'(s)s}{\left(\sqrt{s^2-a^2}\right)_+z}\ds = \frac{i}{z}.
\end{equation}
Taking the derivative of the potential $\widetilde{V}(x)$ and substituting it into Equation~\eqref{eq:series_second_term}, we get
\begin{equation}
\label{eq:ln_integral_equals_i}
 \frac{1}{2}\int_{-a}^{a}\frac{s}{\sqrt{s^2-a^2}}\cdot\frac{-1}{\pi}\ln \absval{\frac{s+(c+1)/2}{s-(c+1)/2}} \ds = i.
\end{equation}
By taking a derivative, we can check that
\begin{align*}
  \int\frac{s}{\sqrt{s^2-a^2}} & \ln\absval{\frac{s+(c+1)/2}{s-(c+1)/2}} \ds
  \\ & = \frac{1}{2} \left(\left(2 \sqrt{s^2-a^2}-\sqrt{(c+1)^2-4 a^2}\right) \log (c+1-2 s) \right.
  \\ & \hspace{30pt} \left. + \left(\sqrt{(c+1)^2-4 a^2}-2 \sqrt{s^2-a^2}\right) \log (c+1+2s) \right.
  \\ & \hspace{30pt} \left. - \sqrt{(c+1)^2-4 a^2} \log \left(\sqrt{(c+1)^2-4 a^2} \sqrt{s^2-a^2}-2 a^2-(c+1) s\right) \right.
  \\ & \hspace{30pt} \left. + \sqrt{(c+1)^2-4 a^2} \log \left(\sqrt{(c+1)^2-4 a^2} \sqrt{s^2-a^2}-2 a^2+(c+1) s\right) \right.
  \\ & \hspace{30pt} \left. -2 (c+1) \log \left(\sqrt{s^2-a^2}+s\right)\right)+\mathrm{const}.
\end{align*}
Substituting the integration limits we obtain the equation
\begin{equation}
\label{eq:relating_a_to_c}
  \frac{c+1}{2}\left[1-\sqrt{1-\left(\frac{2a}{c+1}\right)^{2}}\right] = 1,
\end{equation}
which can be solved for $c\geq 1$, and we obtain
\begin{equation}
\label{eq:a_sqrt_c}
  a=\sqrt{c}.
\end{equation}
We see that indeed $a<\frac{c+1}{2}$ for $c>1$ and the solution  $\widetilde{\rho}$ of the variational problem~\eqref{eq:J_functional} is given by the formula
\[
 \widetilde{\rho}(x)=\frac{1}{\pi}\Re[ G_+(x)] = \frac{1}{\pi^{2}}\Re\left[\sqrt{x^2-c}\int_{-\sqrt{c}}^{\sqrt{c}}\frac{\frac{1}{2}\left(\ln\left(\frac{c+1}{2}+s\right)-\ln\left(\frac{c+1}{2}-s\right)\right)}{\left(\sqrt{s^2-c}\right)_+ (s-x)} \ds \right].
\]

To compute the integral, we combine the logarithms the same way as we did in Equation~\eqref{eq:ln_integral_equals_i}:
\[
    \frac{1}{\pi^{2}}\int_{-\sqrt{c}}^{\sqrt{c}}\frac{\left(\ln\left(\frac{c+1}{2}+s\right)-\ln\left(\frac{c+1}{2}-s\right)\right)}{\sqrt{c-s^{2}}(s-x)} \ds =
  \frac{1}{\pi}\int_{-\sqrt{c}}^{\sqrt{c}}\frac{1}{\sqrt{c-s^2}(s-x)}\cdot\frac{1}{\pi}
  \ln\absval{\frac{s-\frac{c+1}{2}}{s+\frac{c+1}{2}}} \ds.
\]
Notice that the function
\[
\frac{1}{\pi}\ln\absval{\frac{s-(c+1)/2}{s+(c+1)/2}}
\]
is the Hilbert transform of the 
indicator function ${\bf 1}_{[-(c+1)/2,(c+1)/2]}$.
By using the following well-known relation (see, for example,~\cite{giang2010finite})
\[
  \int_{-\infty}^{\infty}f(s)\widetilde{g}(s)\ds=-\int_{-\infty}^{\infty}\widetilde{f}(s)g(s)\ds,
\]
where $\widetilde{f}$ is a Hilbert transform of $f$ and $f\in L^{p}(\RR)$, $g\in L^{q}(\RR)$ with $\frac{1}{p}+\frac{1}{q}=1$, and taking $g = {\bf 1}_{[-(c+1)/2,(c+1)/2]}$, we obtain
\[
  \frac{1}{\pi}\int_{-\infty}^{\infty}f(s)\ln\left|\frac{s-(c+1)/2}{s+(c+1)/2}\right|\ds=-\int_{-(c+1)/2}^{(c+1)/2}\widetilde{f}(s)\ds.
\]

Thus, we need to compute the Hilbert transform for the function
\[
f(y) = \begin{cases}
\displaystyle \frac{1}{\pi}\frac{1}{(y-x) \sqrt{y^{2}-c}} & \text{if } y \in[-\sqrt{c},\sqrt{c}], \\
0 & \text{otherwise.}
\end{cases}
\]
and then integrate it from $-(c+1)/2$ to $(c+1)/2$.
In order to compute the integral in the Hilbert transform $\widetilde{f}$, we take the change of variables
\begin{equation}
\label{eq:hilbert_intergal_cov}
y=\sqrt{c}\frac{c-t^{2}}{c+t^{2}},
\qquad
\frac{\dy}{\sqrt{c-y^{2}}}=-\frac{2\sqrt{c}\dt}{c+t^{2}},
\end{equation}
and hence, we obtain
\[
  \widetilde{f}(z) = \frac{1}{\pi^{2}}\int_{-\sqrt{c}}^{\sqrt{c}}\frac{ds}{\sqrt{c-s^{2}}(s-x)(s-z)} = \frac{1}{\pi}
  \frac{\displaystyle \left(\frac{1}{\sqrt{z^2-c}}-\frac{1}{\sqrt{x^2-c}}\right)}{x-z}.
\]
At last, we compute the integral
\[
\widetilde{\rho}(x) = \frac{1}{\pi}\Re\left[\sqrt{c-x^{2}}\int_{-(c+1)/2}^{(c+1)/2}\frac{1}{2}
    \left(\frac{1}{(x-z)\sqrt{z^2-c}}-\frac{1}{(x-z)\sqrt{x^2-c}}\right)
  \dz\right].
\]
Here again we can use the substitution~\eqref{eq:hilbert_intergal_cov} or find the indefinite integral in a reference table of integrals such as~\cite{gradshteyn2014table} and obtain
\begin{equation}
\label{eq:rho_tilde_answer}
\begin{aligned}
\widetilde{\rho}(x) & = -\frac{1}{2\pi}
  \left[\Im\left(\log \left(\sqrt{(c-1)^{2}} \sqrt{x^2- c}-(c+1)x+2c\right) \right.\right.
   \\ & \left.\left. \hspace{60pt} +\log \left(\sqrt{(c-1)^{2}} \sqrt{x^2-c}+(c+1) x+2c\right)\right)-\pi \right].
\end{aligned}
\end{equation}
This answer~\eqref{eq:rho_tilde_answer} is easily rewritten in terms of the inverse trigonometric functions for $c\geq 1$ and $\abs{x}\leq \sqrt{c}$ as
\begin{equation}
\label{eq:41}
\widetilde{\rho}(x)=\frac{1}{2\pi}\left[
    \arctan \left(\frac{-(c+1)x+2c}{(c-1) \sqrt{c-x^2}}\right)+
    \arctan\left(\frac{ (c+1)x+2c}{(c-1) \sqrt{c-x^2}}\right)\right].
\end{equation}
The typical graph of the function $\widetilde{\rho}(x)$ for $c>1$ is presented in
Figure~\ref{fig:rho}. The limit shape of the diagram is then obtained
using the formula~\eqref{eq:limit_density}. An example for $c=9$ and a diagram
with $n=10, k=90$ is presented in Figure
\ref{fig:n-10-k-90-diagram-and-limit-shape}.

\begin{figure}
  \includegraphics[width=12cm]{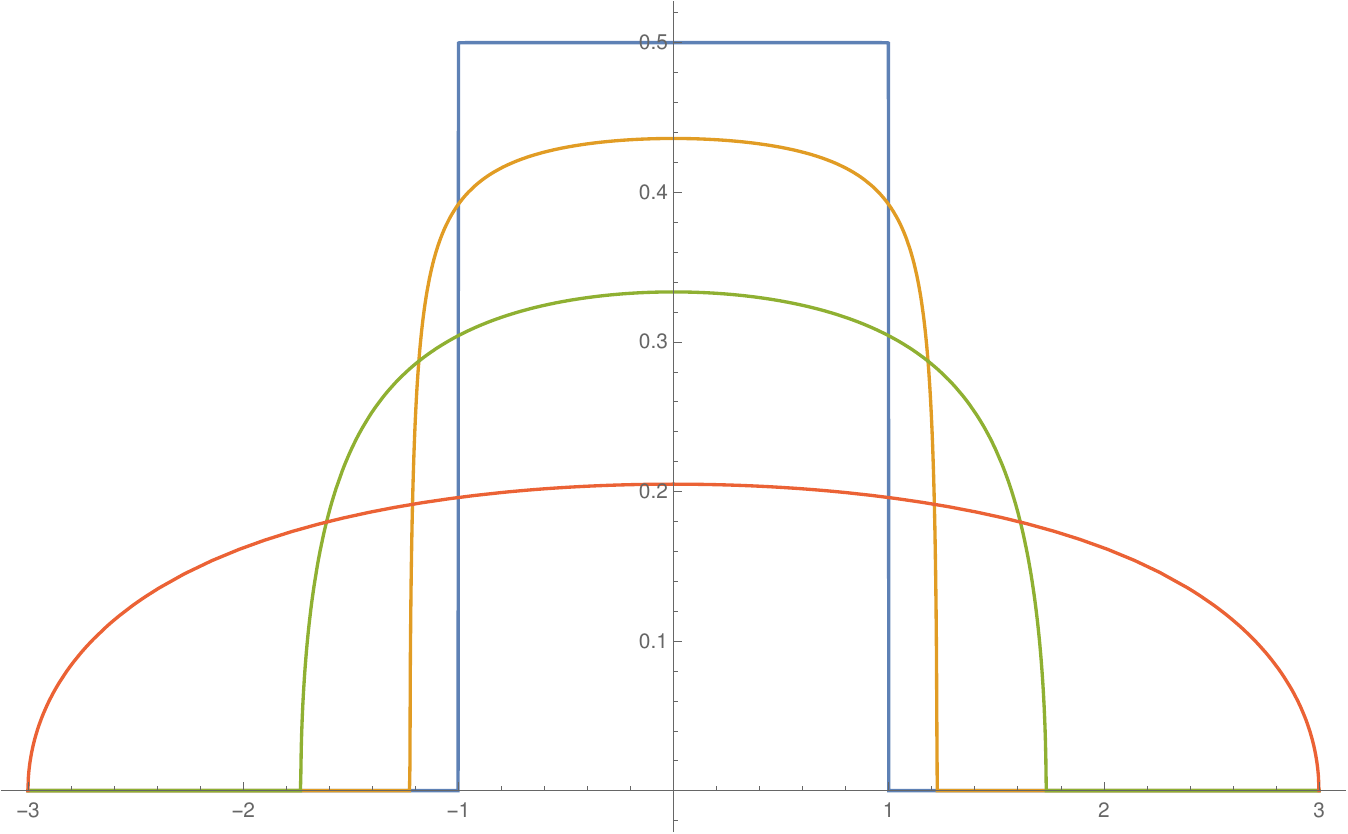}
\caption{The function $\widetilde{\rho}(x)$ for $c=1$ (blue), $c=\frac{3}{2}$ (orange), $c=3$ (green) and $c=9$ (red).}
\label{fig:rho}

\end{figure}
\begin{figure}[t]
  \includegraphics[width=10cm]{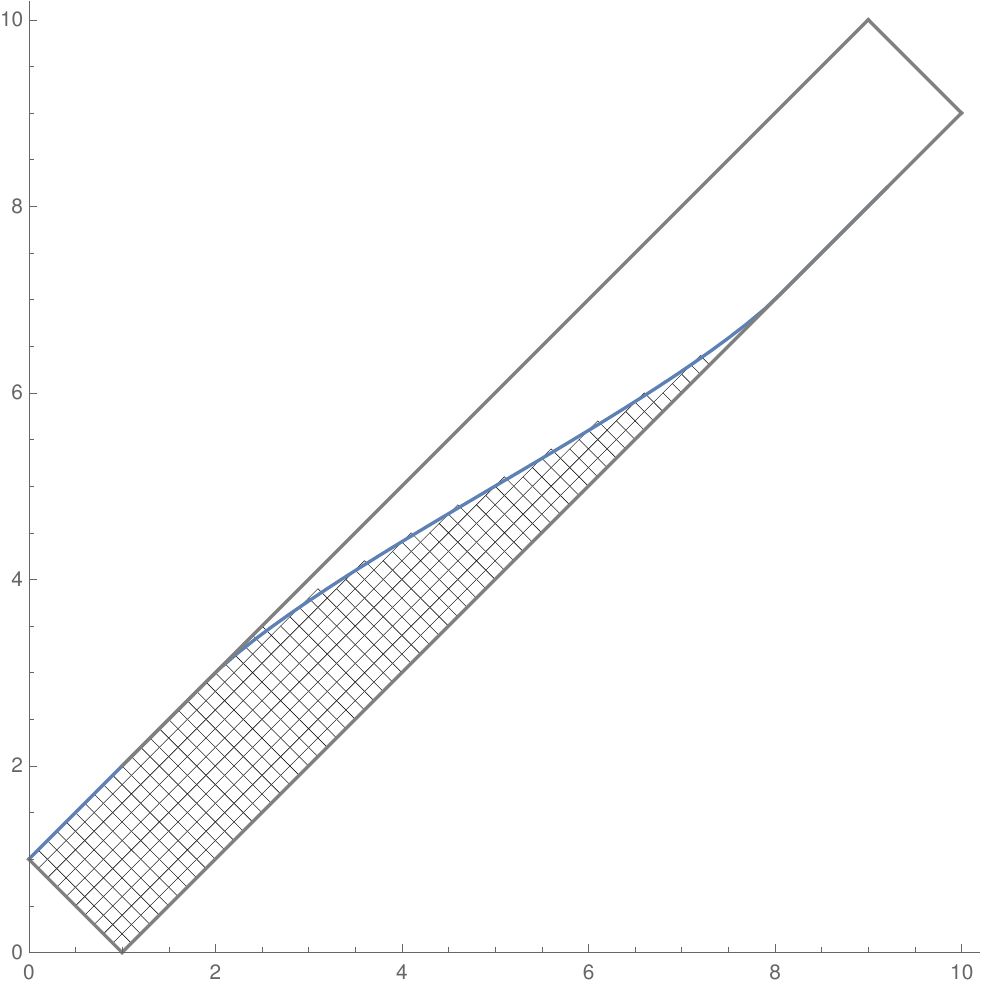}
\caption{The most probable $(\GL_{n},\GL_{k})$ diagram  for $n=10, k=90$ and the limit shape for $c=9$.}
\label{fig:n-10-k-90-diagram-and-limit-shape}
\end{figure}

For $c<1$ it is no longer possible to find the minimizer such that $\widetilde{\rho}(x) < 1$ for all $x$.
The potential $\widetilde{V}(x)$ becomes weaker as $c$ tends to $1$, and when $c=1$ we have a ``phase transition.''
In this case the particles are not confined strictly inside the interval $[-1,1]$ anymore, and instead we have a constant density $\widetilde{\rho}(x)\equiv 1/2$ on the whole interval.
We have an obvious restriction $\rho(x) \leq 1$, therefore for $c < 1$ it is reasonable to expect
\[
\widetilde{\rho}(x)=1 - \rho_{1}(x),  
\]
where $\supp \rho_1 \subset \left[-\frac{c+1}{2}, \frac{c+1}{2}\right]$.
Note that $\widetilde{\rho}(x)\equiv\frac{1}{2}$ is a constant solution to Equation~\eqref{eq:electrostatic_equilibrium} for $a=1$.
Then
\[
  \int_{-(c+1)/2}^{(c+1)/2}\frac{\widetilde{\rho}(y)\dy}{x-y}=\int_{-(c+1)/2}^{(c+1)/2}\frac{(1-\rho_{1}(y))\dy}{x-y}=-2\widetilde{V}'(x)+\int_{-(c+1)/2}^{(c+1)/2}\frac{\rho_{1}(y)\dy}{x-y}=-\widetilde{V}'(x),
\]
and the function $\rho_{1}(x)$ should also be a solution of~\eqref{eq:electrostatic_equilibrium}, but with a different normalization condition
\[
  \int_{-(c+1)/2}^{(c+1)/2}\rho_{1}(x)\dx=-\int_{-(c+1)/2}^{(c+1)/2}\widetilde{\rho}(x)\dx+\int_{-(c+1)/2}^{(c+1)/2}1\dx=c.
\]
The integral representation of $\rho_{1}(x)$ is obtained 
in the same way as for the case $c>1$, but Equation~\eqref{eq:relating_a_to_c} becomes
\[
  \frac{c+1}{2}\left[1-\sqrt{1-\left(\frac{2a}{c+1}\right)^{2}}\right]=c,
\]
and we again get $a=\sqrt{c}$. The function $\rho_{1}$ is derived in exactly the same way as in the case $c>1$ and the final formula is
\begin{equation}
  \label{eq:47}
  \widetilde{\rho}(x)=1-\frac{1}{2\pi}\left[
    \arctan \left(\frac{-(c+1)x+2c}{(1-c) \sqrt{c-x^2}}\right)+
    \arctan\left(\frac{ (c+1)x+2c}{(1-c) \sqrt{c-x^2}}\right)\right],
\end{equation}
which leads to the formula~\eqref{eq:limit-shape-for-f}.
This formula can be also obtained by interchanging $n$ and $k$ for $\GL_{n}\times \GL_{k}$ case.\footnote{This does not hold for the $\SO$ and $\Sp$ cases we consider in the sequel.}
\end{proof}

The most probable diagram (in our measure) for $n=20$, $k=10$ and the corresponding limit shape for $c=0.5$ as well as the most probable diagram for $n=10, k=20$ and the limit shape for $c=2$ are presented in Figure~\ref{fig:n-20-k-10-diagram-and-limit-shape}.
We obtained these most probable diagrams, as well as those below, by using (a discrete) gradient descent.

\begin{figure}[t]
  \centering
  \includegraphics[width=7cm]{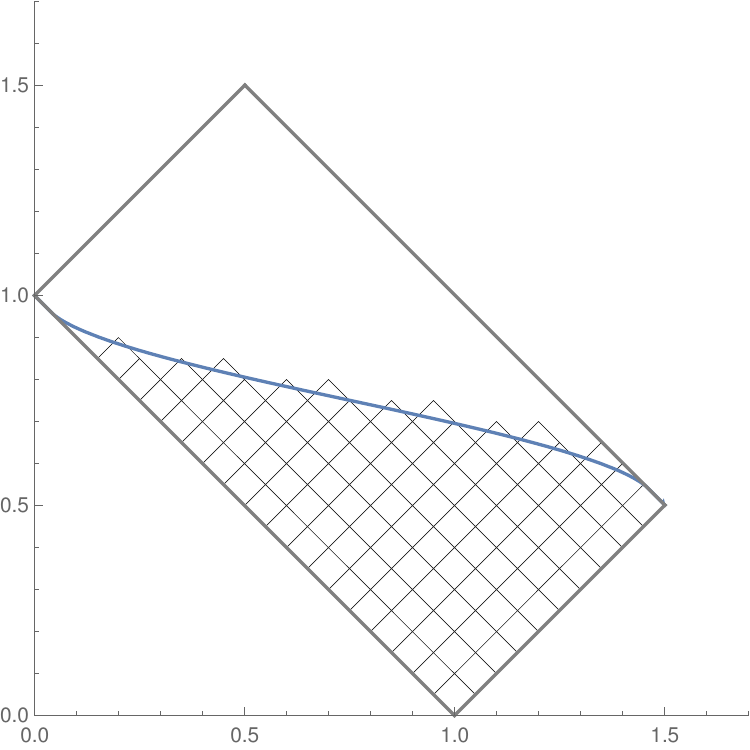}
  \qquad
  \includegraphics[width=7cm]{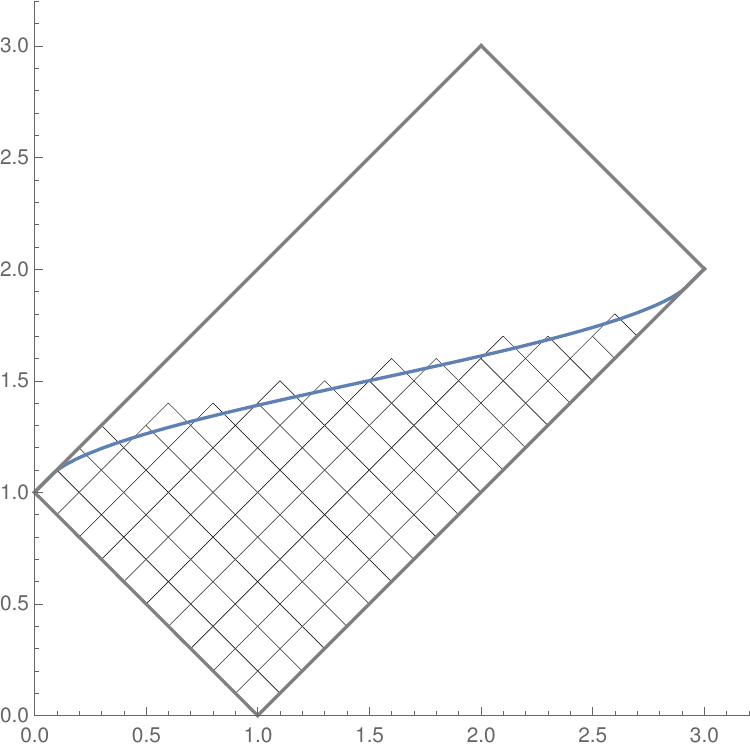}
  \caption{One of the most probable $(\GL_{n},\GL_{k})$ diagrams for $n=20, k=10$ and the limit
    shape for $c=0.5$ on the left and one of the most probable
    diagrams for $n=10, k=20$ and the limit shape for $c=2$ on the right.}
  \label{fig:n-20-k-10-diagram-and-limit-shape}
\end{figure}

\begin{proof}[Proof of Theorem \ref{thm:limit_shape_gl} for
  $\GL_{n}\times \GL_{k}$]
  The proof of the convergence to the limit shape is completely analogous to the proof for $\SO_{2n+1}$ presented in~\cite{NNP20}.

  The proof proceeds as follows. First the functional $J$ is written in terms of the upper boundary $f_{n}$ as $J[f_{n}] = Q[f_{n}]+C$, where $Q$ is quadratic in the derivative $f_{n}'$:
\[
    J[f_{n}]=Q[f_{n}]+C,
    \qquad
    Q[f_{n}]=\frac{1}{2}\int_{0}^{(c+1)/2}\int_{0}^{(c+1)/2} f_{n}'(x)
    f_{n}'(y) \ln \abs{x-y}^{-1}\dx\;\dy.
\]
Since our definition of $Q$ is similar to a definition in the book~\cite{romik2015surprising}, we can use~\cite[Prop.~1.15]{romik2015surprising} and see that $Q$ is positive-definite on compactly-supported Lipschitz functions.
Then for a compactly supported Lipschitz function $f \colon \RR \to [0,\infty)$, the quadratic part $Q$ of the functional $J$ is used to introduce a norm
\[
  \Abs{f}_{Q} := Q[f]^{1/2}.
\]
Consider a space of $1$-Lipschitz functions $f_{1}$ and $f_{2}$ such that the derivative $f_{1,2}'(x) = \mathrm{sgn}\; x$ for $\abs{x} > \frac{c+1}{2}$.
Then the difference $f_{1}-f_{2}$ is a compactly supported Lipschitz function and we can use its norm to introduce a metric
\begin{equation}
\label{eq:Q-distance}
  d_{Q}(f_{1},f_{2}) := \Abs{f_{1}-f_{2}}_{Q}.
\end{equation}
We can use~\cite[Lemma 1.21]{romik2015surprising} to obtain an estimate on the supremum norm for a Lipschitz function $f$ with a compact support:
\begin{equation}
  \label{eq:supremum-norm}
  \Abs{f}_{\infty} = \sup_{x} \abs{f(x)} \leq C_{1} Q[f]^{1/4},
\end{equation}
where $C_{1}$ is some constant.

Then we estimate the probability of the diagram that differs from the limit shape by $\varepsilon$.
For a highest weight $\lambda$ with the boundary of rotated Young diagram given by a function $f_{n}(x)$ such that $d(f_{n},f)=\varepsilon$, the probability is bounded by
\[
    \mu_{n,k}(\lambda)\leq C_{2} e^{-n^{2}\varepsilon^{2}+\mathcal{O}(n\ln n)}.
\]
After that we need only to estimate total number of diagrams in the $n\times k$ box as at most $\tilde{C}e^{cn}$ in order to have the convergence in probability in the metric $d_{Q}$ to the limiting shape given by the formula~\eqref{eq:limit-shape-gl}.
This estimate is easily obtained from the Hardy--Ramanujan formula, since total number of boxes in the diagram is not greater than $cn^{2}$.
That is, for all $\varepsilon>0$ we have
  \begin{equation}
    \label{eq:67}
    \mathbb{P}\left(\Abs{f_{n}-f}_{Q}>\varepsilon\right)\xrightarrow[n\to\infty]{} 0,
  \end{equation}
since the probability of each highest weight $\lambda$ with a rotated Young diagram with boundary $f_{n}$ such that $\Abs{f_{n}-f}_{Q}>\varepsilon$ is bounded by $e^{-n^{2}\varepsilon^{2}+\mathcal{O}(n\ln n)}$.

  At last we apply the relation~\eqref{eq:supremum-norm} to complete the
  proof of the theorem. 
\end{proof}
\subsection{Limit shape for \texorpdfstring{$(\SO_{2l+1},\Pin_{2k})$}{(SO(2l+1), Pin(2k))} skew Howe duality}
\label{sec:limit-shape-series-B}

Now we will assume that $n=2l+1$ is odd.
Then, as was discussed in Section~\ref{sec:skew_howe},  it has a multiplicity-free decomposition into the direct sum of $\SO_{2l+1}\times \Pin_{2k}$ irreducible representations that are parametrized by generalized Young diagrams given in Section~\ref{sec:mult_type_BC}.
Regarding this decomposition as a $\SO_{2l+1}$ representation, we obtain the formula for the multiplicities of the irreducible representations in the tensor power decomposition of the exterior algebra of the defining representation $\bigwedge V(\fw_{1})$:
\[
  \left(\bigwedge V(\fw_{1}) \right)^{\otimes k} = \bigoplus_{\lambda} \widetilde{M}^k(\lambda) V(\lambda).
\]
Since $\bigwedge V(\fw_{1}) \iso V(\Lambda_{l})^{\otimes 2}\otimes \bigwedge V(0)$, it is equivalent to compute the multiplicity of $V(\lambda)$ in the tensor power decomposition
\[
V(\fw_{l})^{\otimes 2k} = \bigoplus_{\lambda} M^{2k}(\lambda) V(\lambda)
\]
since the multiplicities are related by $M^{2k}(\lambda) = 2^{-k} \widetilde{M}^k(\lambda)$.
Thus we recover the multiplicity formula obtained in~\cite{kulish2012tensor}:
\[
  M^{2k}(\lambda)=
   \prod_{m=1}^{l}\frac{\left(2k+2m-2\right) !}{2^{2m-2}\left( \frac{2k+a_{m}+2l-1}{2}\right) !\left( \frac{2k-a_{m}+2l-1}{2}\right) !}
\times \prod_{s=1}^{l}a_{s}
\times \prod_{i < j} \left( a_{i}^{2}-a_{j}^{2}\right),
\]
where the coordinates $\{a_{i}\}$ we are related to the values $\lambda = \sum_{i=1}^{n}\ell_{i} \fw_{i}$ by the formula
\begin{equation}
\label{eq:a_coords_so2np1}
  a_{i}=2\sum_{j=i}^{l-1}\ell_{j}+\ell_{l}+2(l-i)+1 = 2(\lambda_i + l - i) + 1
\end{equation}
and correspond to the rotated Young diagram, as demonstrated in Figure~\ref{fig:young-rotated-ai-geom-meaning}.
We will use the coordinates~\eqref{eq:a_coords_so2np1} for the remainder of this section.

The limit shape for this case was completely derived and presented with all the proofs in~\cite{NNP20}.
Here we will present the limit shape in a special normalization so that the connection between limit shapes for the diagrams of $\SO_{2l+1}$ and $\GL_{n}$ becomes apparent.

\begin{figure}[t]
  \[
  \begin{tikzpicture}[baseline=0,scale=0.7]
  \draw[->, thick] (-5,0) -- (-5,8);
  \draw[->, thick] (-5,0) -- (8,0);
  \foreach \x in {-10,...,12}
    \draw[-] (\x/2,-0.15) -- ++(0,.3);
  \foreach \x in {0,5,10}
    \draw[-,thick] (\x-5,-0.25) -- ++(0,.5);
    \draw(-5,-0.15) node[anchor=north] {$0$};
    \draw(0,-0.25) node[anchor=north] {$10$};
    \draw(5,-0.15) node[anchor=north] {$20$};        
  \foreach \i/\x in {0/5, .5/5, 1/5, 2/4, 3/2, 4/2, 5/1, 6/1}
    \draw[-] (\i,\i) -- ++(-\x,\x);
  \draw[-] (0,0) -- ++(6,6);
  \foreach \x [count=\i from 1] in {6, 4, 2, 2, 1} {
    \draw[-] (-\i,\i) -- ++(\x,\x);
    \draw[dashed, blue] (\x-\i+.5, \x+\i-0.5) -- (\x-\i+0.5, 0);
    \draw[blue, fill=blue] (\x-\i+.5, \x+\i-0.5) circle (0.08);
    \draw[blue, fill=blue] (\x-\i+.5, 0) circle (0.07) node[anchor=north, color=black] {$a_{\i}$};
    }
  \draw[-] (-5,5) -- ++(1,1);
  \end{tikzpicture}
  \]
  \caption{Rotated generalized Young diagram for $\SO_{2l+1}$ and the geometrical meaning of
    the coordinates $\{a_i\}_{i=1}^{l}$.}
  \label{fig:young-rotated-ai-geom-meaning}
\end{figure}
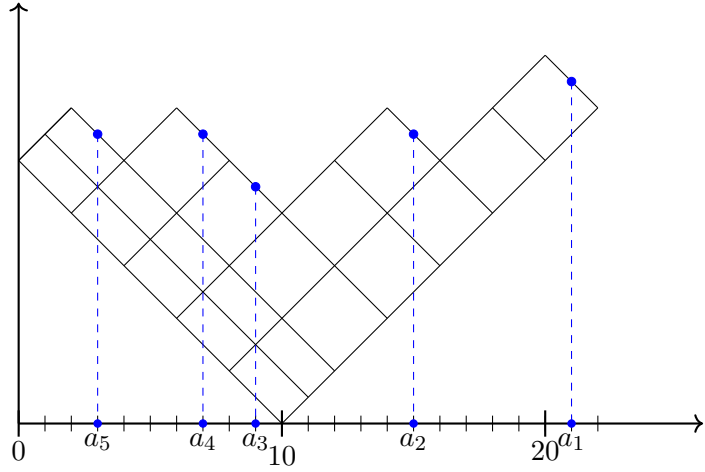

Using the Weyl dimension formula, the probability measure is written as
\begin{equation}
  \label{eq:so2p1_coord_measure}
\mu_{n,k}(\lambda) 
 = \frac{2^{-l^2+2l-lk}l!}{{(2l)!(2l-2)!\dots 2!}}\times
   \prod_{m=1}^{l}\frac{\left(2k+2m-2\right) !}{2^{2m-2}\left(\frac{2k+a_{m}+2l-1}{2}\right) !\left(\frac{2k-a_{m}+2l-1}{2}\right) !}\times\prod_{s=1}^{l}a_{s}^{2} \times \prod_{ i<j} \left( a_{i}^{2}-a_{j}^{2}\right)^{2}.
\end{equation}
Now, we consider the limit $n,k\to\infty$ such that $\frac{2k}{n} = c+\mathcal{O}\left(\frac{1}{n}\right)$.
Here the notation is different from what was used in the paper~\cite{NNP20}.
The coordinates $\{a_{i}\}$ are taking integer values in the domain $[0,n(c+1)]$.

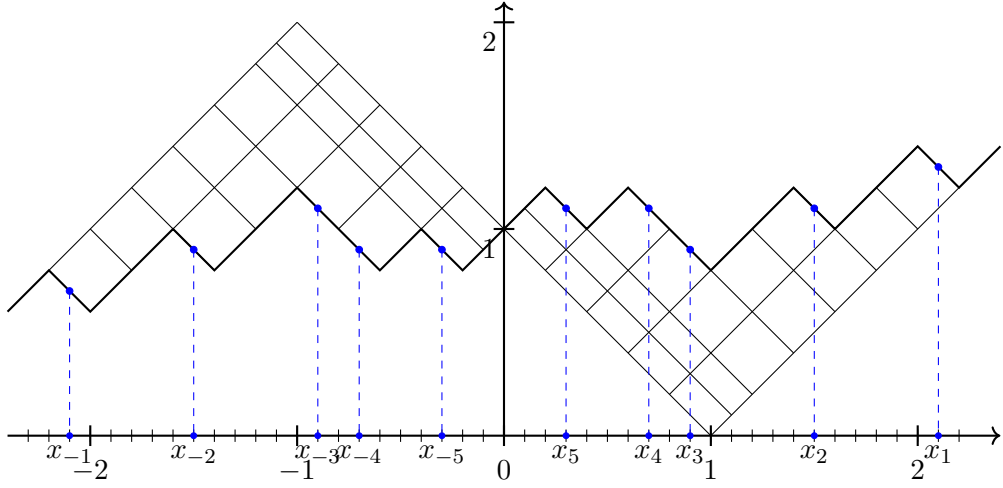
\begin{figure}[t]
  \[
  \begin{tikzpicture}[baseline=0,scale=0.55]
  \draw[->, thick] (-5,0) -- (-5,10.5);
  \draw[->, thick] (-17,0) -- (7,0);
  \draw[-, thick] (-17,3) -- (-16,4) -- (-15,3) -- (-13,5) -- (-12,4) -- (-10,6) -- (-8,4) -- (-7,5) -- (-6,4) -- (-4,6) -- (-3,5) -- (-2,6) -- (0,4) -- (2,6) -- (3,5) -- (5,7) -- (6,6) -- (7,7);
  \foreach \x in {-33,...,12}
    \draw[-] (\x/2,-0.15) -- ++(0,.3);
  \foreach \x in {-2,-1,0,1,2} {
    \draw[-,thick] (\x*10/2-5,-0.25) -- ++(0,.5);
    \draw(\x*10/2-5,-0.35) node[anchor=north] {$\x$};
  }
  \foreach \x in {1,2} {  
    \draw[-,thick] (-5-0.25,\x*10/2) -- ++(.5,0);
    \draw(-5-0.35,\x*10/2) node[anchor=north] {$\x$};
  }
  \foreach \i/\x in {0/5, .5/5, 1/5, 2/4, 3/2, 4/2, 5/1, 6/1} {
    \draw[-] (\i,\i) -- ++(-\x,\x);
    \draw[-] (-10-\i,10-\i) -- ++(\x,-\x);
  }
  \draw[-] (0,0) -- ++(6,6);
  \foreach \x [count=\i from 1] in {6, 4, 2, 2, 1} {
    \draw[-] (-\i,\i) -- ++(\x,\x);
    \draw[dashed, blue] (\x-\i+.5, \x+\i-0.5) -- (\x-\i+0.5, 0);
    \draw[blue, fill=blue] (\x-\i+.5, \x+\i-0.5) circle (0.08);
    \draw[blue, fill=blue] (\x-\i+.5, 0) circle (0.07) node[anchor=north, color=black] {$x_{\i}$};
    \draw[-] (-11+\i,11-\i) -- ++(-\x,-\x);
    \draw[dashed, blue] ({-10-(\x-\i+.5)}, {10-(\x+\i-0.5)}) -- ({-10-(\x-\i+.5)}, 0);
    \draw[blue, fill=blue] ({-10-(\x-\i+.5)}, {10-(\x+\i-0.5)}) circle (0.08);
    \draw[blue, fill=blue] ({-10-(\x-\i+.5)}, 0) circle (0.07) node[anchor=north, color=black] {$x_{-\i}$};
    }
  \end{tikzpicture}
  \]
  \caption{Rotated and scaled diagram for $\SO_{2l+1}$ with $l=5$ and its continuation to negative values of coordinate $x$.
    The function $f_{l}(x)$ is shown in solid black, and the points $x_{i}=\frac{a_{i}}{2l}$ are the midpoints of intervals, where $f_{l}'(x)=-1$. }
  \label{fig:young-rotated-and-continued}
\end{figure}

To bring the expression~\eqref{eq:so2p1_coord_measure} to the form~\eqref{eq:measure_factorized}, we denote by $a_{2l+1-i},\;i>0,\;i<l$ the ``mirror image'' of $a_{i}$:
\begin{equation}
  \label{eq:reflected_a_coords}
  a_{2l+1-i}\equiv -a_{i}.
\end{equation}
These points correspond to a continued diagram (that is, with the diagram rotated 180 degrees around its intersection along the $x=0$ line), as illustrated in Figure~\ref{fig:young-rotated-and-continued}.
Then we use Stirling formula to rewrite the measure~\eqref{eq:so2p1_coord_measure} in the form:
\[
  \mu_{n,k}(\left\{a_{i}\right\}_{i=1}^{2l})=\frac{1}{Z_{l}}\prod_{\substack{i< j\\i,j=1}}^{2l}\abs{a_{i}-a_{j}}
  \cdot \prod_{s=1}^{2l}\exp\left[-(4l)V\left(\frac{a_{s}}{4l}\right)-e_{l}\left(a_{s}\right)\right],
\]
where
\begin{align}
  V(u) & = \frac{1}{4} \left[\left(\frac{c+1}{2}+u\right)\ln\left(\frac{c+1}{2}+u\right)+\left(\frac{c+1}{2}-u\right)\ln\left(\frac{c+1}{2}-u\right)\right], \label{eq:so2np1_potential_explicit}
  \\ e_{l}(u) & = \frac{1}{4}\ln\left(\bigl( (c+2)l \bigr)^{2}-u^{2}\right)+\frac{1}{2}\ln \abs{u} + \bigO\left(\frac{1}{l}\right),
\end{align}
and $Z_{l}$ does not depend on $a_{l}$ and the additional conditions~\eqref{eq:reflected_a_coords} are satisfied.

Introducing the coordinates $\left\{x_{i}=\frac{a_{i}}{4l}\right\}_{i=1}^{2l}$, we arrive at the same variational problem~\eqref{eq:J_functional}.
Yet now we are interested only in values of $\widetilde{\rho}(x)$ for $x>0$.
The solution is given by the formula~\eqref{eq:41} for $c>1$ and by the formula~\eqref{eq:47} for $c < 1$.
This coincidence of density $\rho$ with $\GL_{n}$ case leads to a peculiar effect for the limit shapes of Young diagrams: for large $n,k$ typical Young diagram of $\SO_{2l+1}$  looks as a part of a typical diagram for $\GL_{n}$.
This is demonstrated in Figure~\ref{fig:soo-over-gl}.

\begin{figure}[t]
\centering
\begin{minipage}{.49\linewidth}
  \centering
\[
\iflimitshapes
\begin{tikzpicture}[baseline=10, scale=0.05]

    \newcount\n
    \newcount\k
  \n=41;
  \k=101;

\foreach \y [count=\i from 0] in  {79, 73, 67, 63, 59, 55, 51, 47, 43, 39, 35, 31, 27, 25, 21, 17, 13, 11, 7, 3} {
  \draw[blue,fill=blue] (\n+\k/2-\i,\k/2+\i) -- (\n+\k/2+\y/2-\i,\k/2+\i+\y/2)--(\n+\k/2-1+\y/2-\i,\k/2+\i+1+\y/2)--
  (\n+\k/2-1-\i,\k/2+\i+1);
}

\draw[gray!40, very thin] (\n,0) -- (\n+\k,\k) -- (\k,\k+\n) -- (0,\n);
\draw[->, gray!40, thin] (0,0) -- (\n+\k,0);
\draw[->, gray!40, thin] (0,0) -- (0,\n+\k);
  
\foreach \y [count=\i from 0] in  {89, 86, 83, 81, 79, 76, 74, 72, 70, 68, 67, 65, 63, 61, 59, 57, 55, 54, 52, 50, 48, 46, 44, 42, 40, 38, 36, 34, 32, 30, 28, 26, 24, 22, 20, 18, 16, 14, 12, 10, 8 } {
  \draw[black, thick] (\n-\i,\i) -- (\n+\y-\i,\i+\y)--(\n-1+\y-\i,\i+1+\y)--
  (\n-1-\i,\i+1);
  \foreach \len in {0,...,\y} {
    \draw[black,thin] (\n-\i+\len,\i+\len) -- (\n-1-\i+\len,\i+1+\len);
  }
}
\draw[black, thick] (\n,0)--(0,\n);
\end{tikzpicture}
\fi
\]
  \caption{One of the most probable Young diagrams for $\GL_{40}$ and $k = 101$ (white background). We superimposed one of the most probable diagrams for
    $\SO_{41}$ and tensor power 50 (shaded blue background).}
  \label{fig:soo-over-gl}
\end{minipage}
\begin{minipage}{.49\linewidth}
  \centering
\[
\iflimitshapes
  \begin{tikzpicture}[baseline=10, scale=0.05]
    \newcount\n
    \newcount\k
  \n=40;
  \k=100;

\foreach \y [count=\i from 0] in  {38, 35, 33, 30, 28, 26, 24, 22, 20, 18, 17, 15, 13, 11, 10, 8, 6, 4, 3, 1 } {
  \draw[fill=blue] (\n+\k/2-\i,\k/2+\i) -- (\n+\k/2+\y-\i,\k/2+\i+\y)--(\n+\k/2-1+\y-\i,\k/2+\i+1+\y)--
  (\n+\k/2-1-\i,\k/2+\i+1);
}
\draw[blue, thick] (\n+\k/2,\k/2)--(\n/2+\k/2,\n/2+\k/2);

\draw[gray!40, very thin] (\n,0) -- (\n+\k,\k) -- (\k,\k+\n) -- (0,\n);
\draw[->, gray!40, thin] (0,0) -- (\n+\k,0);
\draw[->, gray!40, thin] (0,0) -- (0,\n+\k);

  
\foreach \y [count=\i from 0] in  {88, 85, 82, 80, 78, 75, 73, 71, 69, 67, 66, 64, 62, 60, 58, 56, 54, 53, 51, 49, 47, 45, 43, 41, 39, 37, 35, 33, 31, 29, 27, 25, 23, 21, 19, 17, 15, 13, 11, 8 } {
  \draw[black, thick] (\n-\i,\i) -- (\n+\y-\i,\i+\y)--(\n-1+\y-\i,\i+1+\y)--
  (\n-1-\i,\i+1);
  \foreach \len in {0,...,\y} {
    \draw[black,thin] (\n-\i+\len,\i+\len) -- (\n-1-\i+\len,\i+1+\len);
  }
}
\draw[black, thick] (\n,0)--(0,\n);
\end{tikzpicture}
\fi
\]
  \caption{One of the most probable Young diagrams for $\GL_{40}$ and $k=100$ (white background). We superimposed one of the most probable diagrams for $\Sp_{40}$ and tensor power $50$ (shaded blue background).}
  \label{fig:sp-over-gl}
\end{minipage}
\end{figure}

\subsection{Limit shape for \texorpdfstring{$(\Sp_{2l}, \Sp_{2k})$}{(Sp(2l), Sp(2k))} skew Howe duality}
\label{sec:limit-shape-series-C}

This case is very similar to the $\SO_{2l+1}$ case.
We can consider the exterior algebra $\bigwedge\left(\CC^{2l}\otimes \CC^k \right)$ as the $k$-th tensor power of the exterior algebra of the defining representation $V = \bigwedge V(\fw_1)$ since $\dim V = 2^{2l}$.
The multiplicity of $V(\lambda)$ in the decomposition of $V^{\otimes k}$ can be written as a product formula
\[
  M^k(\lambda)=2^{l}\prod_{i=1}^{l}\frac{(2k-1+2i)!}{(k+l+a_{i})!(k+l-a_{i})!}\times\prod_{s=1}^{l}a_{s} \times \prod_{i<j}(a_{i}^{2}-a_{j}^{2}),
\]
where we use the coordinates
\[
a_{i}=\lambda_{i} + l - i + 1.
\]
Using the Weyl dimension formula, we can write
the probability measure as
\begin{equation}
  \label{eq:sp_coord_measure}
  \mu_{n,k}(\left\{a_{i}\right\})=\frac{2^{2l(1-k)}}{\prod_{i<j}(j-i)(2l+2-i-j)} \prod_{i=1}^{l}\frac{(2k-1+2i)!}{(k+l+a_{i})!(k+l-a_{i})!} \prod_{s=1}^{l}a_{s}^{2} \prod_{i<j}(a_{i}^{2}-a_{j}^{2})^{2}.
\end{equation}
We are again interested in the limit $n,k\to\infty$ such that $\frac{2k}{n}=\frac{2k}{2l}=c+\mathcal{O}\left(\frac{1}{n}\right)$.
To bring the expression~\eqref{eq:sp_coord_measure} to the form~\eqref{eq:measure_factorized} we again denote by $a_{2l-i}\equiv -a_{i}$ ($i>0$ or $i<l$) the ``mirror image'' of $a_{i}$.
Then we use Stirling formula to rewrite the measure~\eqref{eq:sp_coord_measure} in the form:
\[
  \mu_{n,k}(\left\{a_{i}\right\}_{i=1}^{2l})=\frac{1}{Z_{l}}\prod_{ \substack{i <  j\\ i,j=1}}^{2l} \abs{a_{i}-a_{j}}
  \times \prod_{s=1}^{2l}\exp\left[-(2l)V\left(\frac{a_{s}}{2l}\right)-e_{l}(a_{s})\right],
\]
where $V(u)$ is the same as in Equation~\eqref{eq:so2np1_potential_explicit},
but the expressions for the correction term $e_{l}(u)$ and the normalization constant $Z_{l}$ are different.

Introducing the coordinates $\left\{x_{i}=\frac{a_{i}}{2l}\right\}_{i=1}^{2l}$, we arrive at the same variational problem~\eqref{eq:J_functional}, and thus we obtain the same limit shape as in the $\SO_{2l+1}$-case.
This limit shape again coincides with a half of non-linear part of limit shape for $GL$ case.
We illustrate this coincidence  with a diagram for $\Sp_{n}$ with $n=20, k=50$  and  $\GL_{n}$-diagram for $n=40,k=50$, presented in Figure~\ref{fig:sp-over-gl}.
Since both cases correspond to $c=5$ and $n$ is large enough, we see a good but not a perfect coincidence of the shapes of the most probable diagrams.


\subsection{Limit shape for \texorpdfstring{$(\Or_{2l}, \SO_{2k})$}{(O(2l), SO(2k))} skew Howe duality}
\label{sec:limit-shape-series-D}

As before, consider the exterior algebra $\bigwedge \left(\CC^{2l}\otimes \CC^{k}\right)$.
Then this space can be seen as $\left(\bigwedge \CC^{2l}\right)^{\otimes k}$, the $k$-th tensor power of the exterior algebra of the first fundamental representation of $\Or_{2l}$.
On the other hand it can be seen as the $2k$-th tensor power of the sum of the last two fundamental representations $\bigl( V(\Lambda_{l-1}) \oplus  V(\Lambda_{l}) \bigr)^{\otimes 2k}$ for $\SO_{2l}$ (recall that as an $\Or_{2l}$-representation, it is irreducible).

The tensor product decomposition coefficient is obtained in Theorem~\ref{thm:mult_type_D}.
Similarly to all previous cases, the coordinates
\[
a_i = 2 \lambda_i + 2 (l-i)
\]
correspond to a rotated Young diagram.
The probability measure is given by the formula
\begin{equation}
  \label{eq:so2n_coord_measure}
  \mu_{n,k}(\{a_{i}\}) = \frac{\displaystyle 2^{-4lk-2l(l-1)} \prod_{i=1}^l (2k+2l-2i)! \prod_{1 \leq i < j \leq l} (a_i^2 - a_j^2)^{2}}{\displaystyle \prod_{i<j}(j-i)(2l-i-j) \prod_{i=1}^l \left( \frac{2k+2l-2-a_i}{2} \right)! \left( \frac{2k+2l-2+a_i}{2} \right)!}
\end{equation}
by applying the Weyl dimension formula.
Similarly to $\SO_{2l+1}$ case we consider the limit $n,k\to\infty$ such that $\frac{2k}{n}=\frac{2k}{2l}=c+\mathcal{O}\left(\frac{1}{n}\right)$. 
Again we bring the expression~\eqref{eq:so2n_coord_measure} to the form~\eqref{eq:measure_factorized} denoting by $a_{2l-i} \equiv -a_{i}$ ($i>0$ or $i<l$) the ``mirror image'' of $a_{i}$.
The only difference here is that there are no columns of the half-width.
Using the Stirling formula to rewrite the measure~\eqref{eq:so2n_coord_measure} in the form
\[
  \mu_{n,k}(\left\{a_{i}\right\}_{i=1}^{2l}) = \frac{1}{Z_{l}}\prod_{\substack{i < j\\ i,j=1}}^{2l} \abs{a_{i}-a_{j}}
  \times \prod_{s=1}^{2l}\exp\left[-(4l) V\left(\frac{a_{s}}{4l}\right)-e_{l}(a_{s})\right],
\]
we again obtain $V(u)$ as in Equation~\eqref{eq:so2np1_potential_explicit},
but the expression for the correction term $e_{l}(u)$ and the normalization constant $Z_{l}$ is different from the $\SO_{2l+1}$ case.
We do not write these expressions here since the limit shape does not depend upon them. 

Introducing the coordinates $\left\{x_{i}=\frac{a_{i}}{4l}\right\}_{i=1}^{2l}$, we again arrive at the same variational problem~\eqref{eq:J_functional}.
Thus we obtain the same limit shape as in the $\SO_{2l+1}$-case.

\subsection{Limit shapes and the insertion algorithms}
\label{sec:limit-shap-insert}

All skew Howe dualities considered above can be seen as tensor power decompositions.
The tensor product decompositions we consider here can all be represented by an insertion algorithm for the corresponding generalized Young diagrams:
\begin{itemize}
\item[$\GL_{n}$:] Schensted insertion (or dual RSK)~\cite{schensted1961longest,knuth1970permutations},
\item[$\Sp_{2l}$:] Berele insertion~\cite{berele1986schensted,Terada93},
\item[$\SO_{2l+1}$:] Benkart--Stroomer insertion~\cite{benkart1991tableaux},
\item[$\SO_{2l}$:] Okada insertion~\cite{okada1993robinson}. 
\end{itemize}
Hence, by pushing forward the uniform distribution on matrices, these insertion algorithms give the same probability measure as~\eqref{eq:prob_measures} on partitions.
Therefore, our results provide the limit shape for these insertion schemes and gives an algorithm to efficiently sample the random diagrams with respect to this measure.

Let us discuss the $(\GL_k, \GL_n)$ case in more detail, where the sampling algorithm proceeds as follows.
First, we generate a uniform random $n\times k$ matrix $M$ with matrix elements taking values $0$ and $1$ with the probability $\frac{1}{2}$.
This matrix $M$ encodes the random basis element of $\bigwedge\left(\CC^{n}\otimes \CC^{k}\right)$
\[
e_M := \bigwedge_{(i,j) : M_{ij} = 1} (e_{i}\otimes e_{j}),
\]
where we go through the pairs $(i,j)$ is some fixed order, such as lexicographic order (the sign does not matter).
Similarly, we consider a sequence of the pairs $(i,j)$ such that $M_{ij} = 1$ ordered lexicographically, which is called a generalized permutation or a biword by Stanley~\cite{ECII}.
We then apply Schensted insertion using the second value $j$ in each pair, where an equal element is bumped downwards in a row the insertion tableau $P$ or added to the end~\cite{knuth1970permutations}.
The new box added to $P$ has the first value $i$ added to the corresponding position in the recording tableau $Q$.
The shapes of $P$ and $Q$ are the same and $Q$ and the transposed insertion tableau $P'$ are semistandard.
Then the shape of $Q$ is conjugate to the shape of $P'$, and tableaux $P'$ and $Q$ encode the basis elements of the decomposition~\eqref{eq:gl_gl_skew_Howe}, as demonstrated in~\cite{knuth1970permutations}.
The shape of the tableau $P'$ is sampled from the distribution~\eqref{eq:GL_prob_measure}.
In Figure~\ref{fig:gl-rsk-sample}, we present a diagram, sampled by the dual RSK algorithm for $n=50, k=150$, as well as the corresponding limit shape for $c = 3 = k/n$.
The limit shape~\eqref{eq:limit-shape-for-f} can be used to deduce the asymptotics of first row length of the random diagram as we obtain $\lambda_{1} \approx \sqrt{kn} + \frac{k-n}{2}$ as $n,k\to\infty$ from~\eqref{eq:a_sqrt_c}.

For the other series, we have analogous sampling algorithms by using the corresponding insertion algorithm.
We also present a diagram in Figure~\ref{fig:gl-rsk-sample} sampled using Benkart--Stroomer insertion for $\SO_{51}$ and $2k = 150$ from the distribution~\eqref{eq:other_prob_measure} since $2k / (2l+1) \approx 3 = c$.
We also obtain the asymptotic of first row length as $\lambda_{1}\approx \sqrt{2kl}$ as $l,k\to\infty, 2k/l \to\mathrm{const}$ from Theorem~\ref{thm:limit_shape_gl}.

\begin{figure}[htb]
  \centering
  \includegraphics[width=12cm]{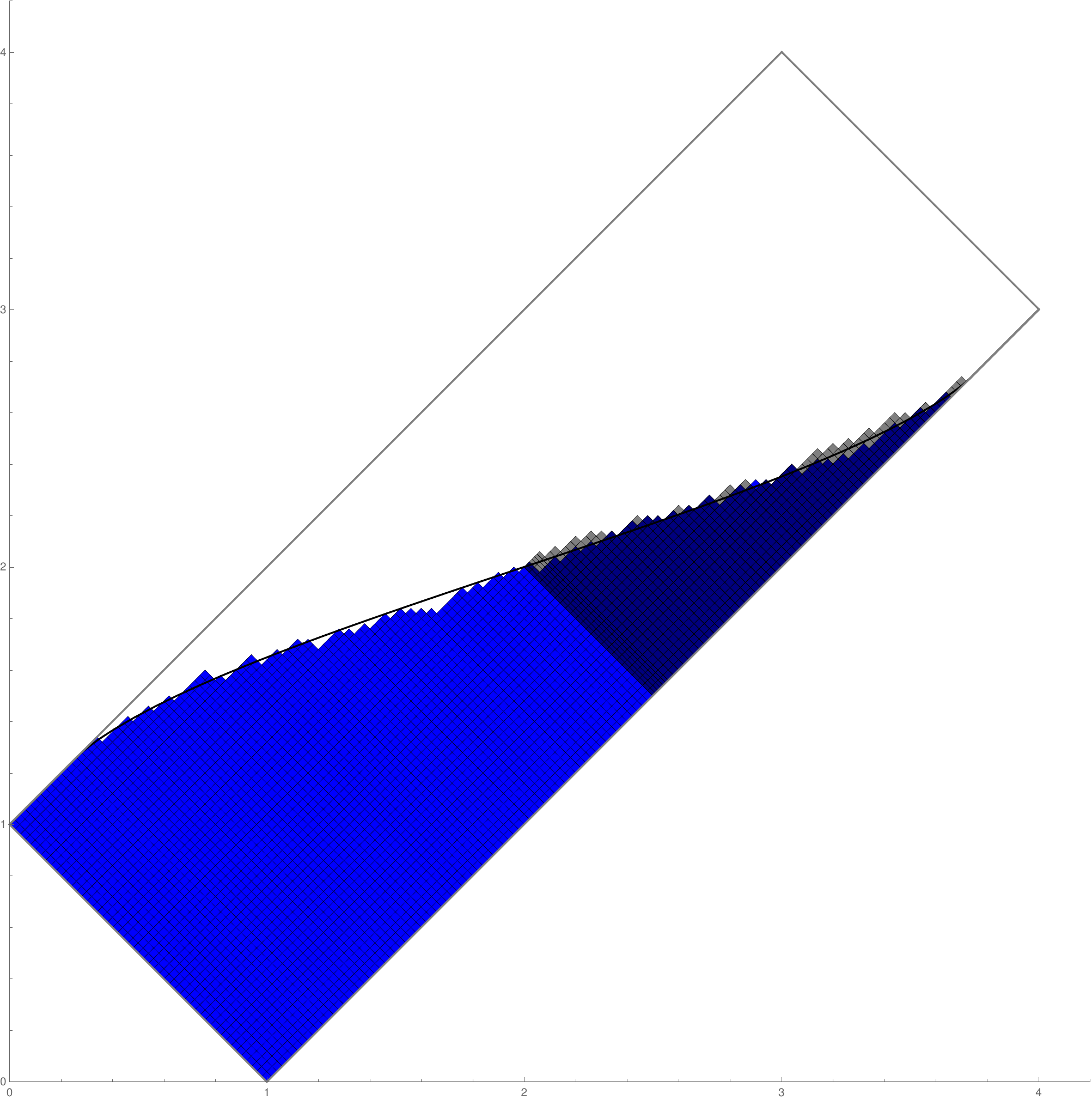}

  \caption{{\it Blue:} Random Young diagram sampled using dual RSK algorithm for $\GL_{50}$ and $k=150$ with the limit shape for $c=3$.
   {\it Shaded:} Random Young diagram sampled using the Benkart--Stroomer insertion algorithm for $\SO_{51}$ and $2k=150$.}
  \label{fig:gl-rsk-sample}
\end{figure}

\subsection{Analytic continuation and orthogonal polynomial ensembles}
\label{sec:limit_shapes_poly_ensembles}

We will discuss the relation between our measures and limit shapes with other results in the literature.
We begin with the $(\GL_n, \GL_k)$ skew Howe duality, discussing its relation with~\cite{pittel2007limit,panova2018skew} and the Krawtchouk ensemble.
We then briefly survey papers of Borodin, Johansson, Okounkov, and Olshanski to connect it with the Meixner ensemble through the ``analytic continuation'' of the parameters $n,k$, and the $z$-measure.
There is another related measure that we discuss called the $zw$-measure, which comes from harmonic analysis of the infinite unitary group $U_{\infty} := \bigcup_{m \geq 1} U_m$.
We then sketch a possible unification of these as a manifestation of the super Howe duality for $(\GL_k, \gl(m|n))$~\cite{howe1989remarks}.
We conclude with showing that other skew Howe dual pairs from our paper are specializations of the type $BC$ $z$-measure introduced by Cuenca~\cite{Cuenca18}.
We discuss the relationship with orthogonal polynomials and possible related super Howe dualities.

\subsubsection{Type A}

The decomposition of the exterior power
\begin{equation}
  \label{eq:exterior_power_decomp}
  \bigwedge\nolimits^{m}\left(\CC^{n}\otimes\CC^{k}\right)=\bigoplus_{\abs{\lambda}=m}V_{\GL_{n}}(\lambda)\otimes
  V_{\GL_{k}}(\lambda') 
\end{equation}
was considered by P.~Sniady and G.~Panova~\cite{panova2018skew}.
They proved the equality
\begin{equation}
  \label{eq:gl-dim-to-sm-dim}
  \frac{\dim V_{\GL_{n}}(\lambda)\dim V_{\GL_{k}}(\lambda')}{\dim \bigwedge^{m}\left(\CC^{n}\otimes\CC^{k}\right)}=
  \frac{f^{\lambda} f^{\overline{\lambda}}}{f^{k^n}},
\end{equation}
recalling $f^{\nu}$ is the dimension of the irreducible representation of the permutation group $S_{\abs{\nu}}$ (which equals the number of standard Young tableaux of shape $\nu$) and $k^n$ denotes a rectangular Young diagram with $n$ rows and $k$ columns.
In~\cite[Thm.~1.4]{panova2018skew}, it was shown the random irreducible component of~\eqref{eq:exterior_power_decomp} corresponds to a pair of Young diagrams $(\lambda,\lambda')$, where $\lambda$ has the same distribution as the Young diagram formed by taking the boxes with its entry $<m$ of a uniformly random Young tableau of rectangular shape $k^n$.
Thus, the limit shape for Young diagrams with the probability measure
\[
\mu_{n,k}^{\langle m \rangle}(\lambda) = \dfrac{\dim V_{GL_{n}}(\lambda)\cdot\dim V_{GL_{k}}(\overline{\lambda}')}{\binom{nk}{m}}
\]
in the limit $n,k,m\to\infty$, $\frac{k}{n}\to\mathrm{const}, \frac{m}{nk}\to\mathrm{const}$ is the same as the level lines of the limit shape for plane partitions presented in~\cite{pittel2007limit}.

Since
\[
\bigwedge\left(\CC^{n}\otimes\CC^{k}\right) = \bigoplus_{m=0}^{nk} \bigwedge\nolimits^{m}\left(\CC^{n}\otimes\CC^{k}\right),
\]
the measure $\mu_{n,k}(\lambda)$ can be written as
\begin{equation}
  \label{eq:binomialization}
  \mu_{n,k}(\lambda)=\sum_{m=0}^{nk}\frac{\mu_{n,k}^{\langle m \rangle}(\lambda) \binom{nk}{m}}{2^{nk}},  
\end{equation}
for the finite values of $n,k,m$. In the limit $n,k\to\infty$, the binomial distribution concentrates on the point $m = \frac{nk}{2}$.
Therefore, the limit shape~\eqref{eq:limit-shape-gl} coincides with the limit shape for $\mu_{n,k}^{\langle \frac{nk}{2} \rangle}(\lambda)$ that was obtained in~\cite{panova2018skew} and is the same as the corresponding level line of the
plane partitions in the box from the paper~\cite{pittel2007limit}.

In the paper~\cite{borodin2007asymptotics}, it was demonstrated that the ``binomialization'' of the measure $\mu_{n,k}^{\langle m\rangle}(\lambda)$ given by~\eqref{eq:binomialization} is the Krawtchouk ensemble and its limit shape was related to the $m=\frac{nk}{2}$ level line of plane partitions in the box.
In particular, compare the following:
\begin{itemize}
\item Equation~\eqref{eq:measure_factorized} recalling $W(a_i) = \binom{k+n-1}{a_i}$ with~\cite[Eq.~(5.2)]{borodin2007asymptotics} (or~\cite[Eq.~(2.4)]{johansson2002non}) at $p = 1/2$;
\item Equation~\eqref{eq:gl-dim-to-sm-dim} with the probability measure denoted $M_{n,N,M}$ in~\cite[Sec.~5]{borodin2007asymptotics}; and
\item Equation~\eqref{eq:binomialization} with~\cite[Eq.~(5.1)]{borodin2007asymptotics}.
\end{itemize}
The relation of the Krawtchouk ensemble to the skew $(GL_{n},GL_{k})$-duality does not seem to have been noticed in~\cite{borodin2007asymptotics,panova2018skew}.
However, it does appear indirectly in~\cite[Prop.~5.1]{Johansson01} through the use of dual RSK and the result~\cite[Thm.~7.1]{BR01}.
From the Krawtchouk ensemble perspective, we obtain Equation~\eqref{eq:gl-dim-to-sm-dim} from~\cite[Prop.~4.3]{BO06}.

We describe the appearance of the skew Howe duality in~\cite{johansson2002non} and connect the lozenge tilings with domino tilings of Aztec diamonds.
We note that the function denoted $w[h]$ in the proof of~\cite[Thm.~2.2]{johansson2002non}, where using our notation $h = (a_n, \dotsc, a_2, a_1)$ and depends on a parameter $\omega$, can be described in terms of lozenge tilings as
\begin{equation}
\label{eq:weighted_lozenge_sum}
w[h] = (1 + \omega^2)^{\binom{n}{2}} \times \prod_{i < j} (a_i - a_j) \times \prod_{i=1}^n \frac{\omega^{\lambda_i}}{(i-1)!}
= \sum_{L} (1 + \omega^2)^{\#B} \omega^{\#R},
\end{equation}
where we sum over all half hexagon lozenge tilings $L$ giving $V(\lambda)$ and $\#X$ denotes the number of tiles $X$ in $L$.
We note that these formulas agree from the Weyl dimension formula~\eqref{eq:weyl_dim} and the fact that the number of $B$ tiles and $R$ tiles is fixed for any given $\lambda$.
Similarly, the formula for $w[h']$ is the same sum over the tilings for $V(\overline{\lambda}')$.
Therefore, we have~\cite[Eq.~(2.11)]{johansson2002non} at $\omega = 1$ is our probability measure $\mu_{n,k}(\lambda)$.

We can then think of the factor $1 + \omega^2$ as choosing between a pair of horizontal or vertical domino tiles in the Aztec diamond, and therefore there exists a $2^{\binom{n}{2}}$-to-$1$ mapping of Aztec diamond tilings to lozenge tilings for $V(\lambda)$.
As a consequence, we have that there are
\[
2^{\binom{n}{2}} 2^{\binom{k}{2}} 2^{nk} = 2^{n(n-1)/2 + k(k-1)/2 + nk} = 2^{(n+k)(n+k-1)/2} = 2^{\binom{n+k}{2}}
\]
tilings of the Aztec diamond of order $n+k-1$, first shown in~\cite{EKLP92I}.\footnote{The bijection between NILPs consisting of large Schr\"oder paths and Aztec diamond tilings given by the DR paths in~\cite{johansson2002non} was rediscovered in~\cite{BK05,EF05}.}
However, we are unable to find such an explicit mapping to give a combinatorial proof of Equation~\eqref{eq:weighted_lozenge_sum}.

\begin{figure}[t]
  \centering
  \includegraphics[width=8cm]{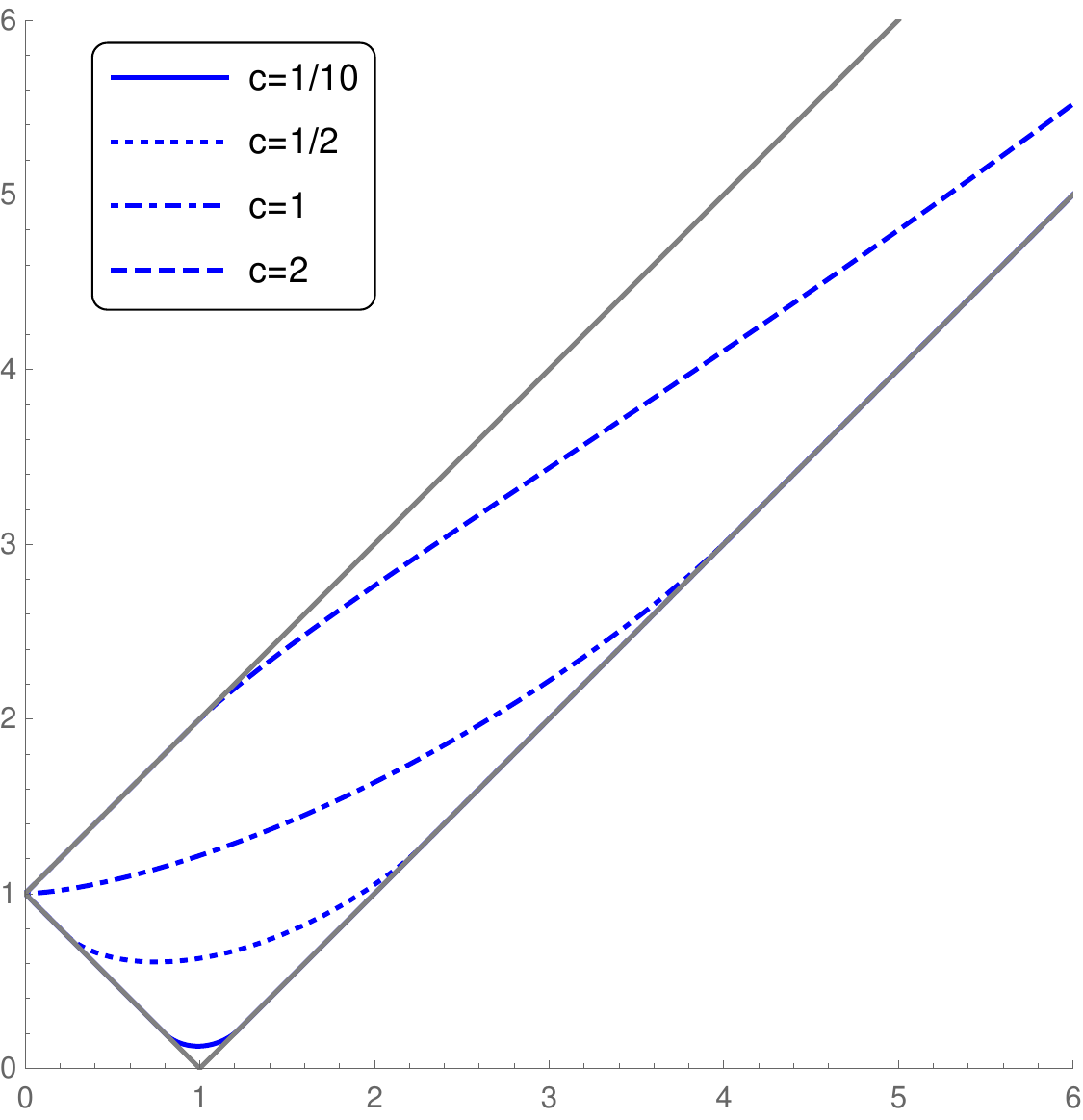}

  \caption{A rescaled version of the limit shapes from Schur--Weyl duality.}
  \label{fig:rescaled_SW_limit}
\end{figure}

We also note similarities with the boundary point fluctuations in~\cite{Biane01} using the measure $\mu_{n,k}^{SW}$ coming from Schur--Weyl duality under the limit $\sqrt{k}/n \to c$ as $n,k \to \infty$.
When $c < 1$, the length of the partition determines the left boundary and will have Tract--Widom GUE fluctuations.
When $c > 1$, the length of the partition is deterministic and the left boundary point will have Tracy--Widom GUE fluctuations.
Lastly, when $c = 1$, this is the critical case that was studied in detail in~\cite{borodin2007asymptotics}, where the fluctuations are described using the discrete Hermite kernel.
These also correspond to the three different regimes of~\cite{GTW01} (while that corresponds to the right boundary of our limit shapes, it is equivalent to the left by the $n \leftrightarrow k$ symmetry).
Compare the left boundary for Figure~\ref{fig:rescaled_SW_limit} with the limit shapes obtained from $\mu_{n,k}$ with $k / n \to c$ such as in Figures~\ref{fig:n-10-k-90-diagram-and-limit-shape} and \ref{fig:n-20-k-10-diagram-and-limit-shape}.

We bring in another character into our ensemble cast, the \defn{$z$-measure}, that comes from harmonic analysis on the infinite symmetric group~\cite{KOV93}.
To do so, we begin by looking at the Schur measure~\cite{Okounkov00,Okounkov01}, which is the measure on partitions $\lambda$ from the Cauchy identity~\eqref{eq:cauchy} renormalized so the sum is $1$.
By specializing $x_i = 1$ and $y_j = \xi$, we obtain a $\xi$-deformed version of the measure $\mu^{\square \infty}_{n,k}(\lambda)$ from the introduction
\begin{equation}
\label{eq:zeta_cauchy_measure}
\mu^{\square \infty}_{n,k;\xi}(\lambda) = (1 - \xi)^{nk} \xi^{\abs{\lambda}} \dim V_{\GL_n}(\lambda) \dim V_{\GL_k}(\lambda).
\end{equation}
Note this does not make sense when $\xi = 1$ as the sum from the Cauchy identity would be infinite.
By using the hook-content formula~\cite[Thm.~15.3]{Stanley71} for $\dim V_{\GL_n}(\lambda)$ and $\dim V_{\GL_k}(\lambda)$ and replacing $n,k \in \ZZ_{>0}$ with $z,z' \in \CC$, we have the $z$-measure~\cite[Eq.~(2.4)]{Okounkov01} (\textit{cf.}~\cite[Eq.~(1.3)]{BO06}).
The connection with the Meixner ensemble was given in~\cite{BO06,Johansson01}\footnote{The Meixner ensemble, Howe duality, and a last passage percolation model have been linked in~\cite{Johansson10,MS20,RS05}.} (see also~\cite[Ex.~1.5]{BO05gamma} for an explicit statement) and by taking $z = n$ and $z' = -k$, we obtain the Krawtchouk ensemble with $\omega = \xi/(\xi - 1)$ with $\xi < 0$~\cite[Prop.~4.1]{BO06}.

We can also describe a relation to the \defn{$zw$-measure}, which now comes from the ``big'' group $U_{\infty}$~\cite{BO05,BO05II}.
An alternative way of describing skew Howe duality is we have a natural left action of $U_n \times U_k$ acting on $n \times k$ matrices by $(a, b) \cdot M = a M b^{-1}$.
For any representation $V$ of $U_n$, we can construct a probability measure $P_V$ by
\[
\widetilde{\chi}(V) = \sum_{\lambda} \widetilde{\chi}(V_{\lambda}) P_V(\lambda),
\]
where $\widetilde{\chi}$ denotes the normalized irreducible character.
Therefore, our measure $\mu_{n,k}$ is this probability measure for the action given above.
In the case $k = n$, skew Howe duality becomes the biregular representation, and additionally we can describe $U_n$ as the symmetric space $(U_n \times U_n) / U_n$, where we use the diagonal embedding $\Delta(U_n)$.
When taking the limit as $n \to \infty$, we need to take a slightly bigger space that still carries the $U_{\infty} \times U_{\infty}$ action.
These give a family of representations parameterized by two complex parameters $z,w$ (with some restrictions), which give probabilities that when restricting down to finite $n$ are the $zw$-measures and have explicit formulas~\cite{BO05}.

Finally, we note that we can take the actions of $U_n$ on the left and the right to be different, which means that the irreducible representations are actually indexed by a pair of partitions.
We can induct a $\gl_n \oplus \gl_m$ representation, which we equate to a $U_n \times U_m$ (polynomial) representation, to a representation of the Lie superalgebra $\gl(m|n)$, where we obtain what is known as a Kac module, and the irreducible representations can be indexed by a pair of partitions with one being the positive part and the other being the negative part.
There is a super analog of Howe duality for $(\GL_k, \gl(m|n))$~\cite{howe1989remarks,Sergeev01,CW01} (see also, \textit{e.g.},~\cite[Ch.~5.2]{CW12}) on the supersymmetric algebra of $\CC^k \otimes \CC^{m|n}$, denoted $\mathcal{S}(\CC^k \otimes \CC^{m|n})$ (see, \textit{e.g.},~\cite[Sec.~5.1.1]{CW12} for a precise definition) giving a super Cauchy identity that ``interpolates'' between the usual Cauchy identity and the dual Cauchy identity:
\[
\sum_{\lambda} s_{\lambda}(\xx) s_{\lambda}(\yy/\mathbf{w}) = \prod_{i,j} \frac{1 + x_i w_j}{1 - x_i y_j},
\]
where $s_{\lambda}(\yy/\mathbf{w})$ denotes the hook Schur or Schur supersymmetric function and are the characters for $\gl(m|n)$ modules~\cite{Kac77} (see also~\cite[Sec.~I.5]{Macdonald98}, where they can be defined in terms of plethystic substitution).
In~\cite{BO05}, they actually use a bigger space $\mathfrak{U}$ that has a $U_{\infty} \times U_{\infty}$ action to describe the extreme characters.
This suggests that $\mathfrak{U}$ corresponds to a Kac representation or the corresponding irreducible representation for $\gl(\infty|\infty)$ as a limit of the Kac representations for $\gl(n|n)$ or the limit of $\mathcal{S}(\CC^k \otimes \CC^{m|n})$ when $k,n,m \to \infty$.

\subsubsection{Types BCD}

By instead considering the infinite orthogonal and symplectic groups and limits of symmetric spaces~\cite{OO06}, we arrive at the \defn{type $BC$ $z$-measure}~\cite{BO05gamma,OO12}, which is defined on partitions with $\ell(\lambda) \leq l$ by
\begin{align*}
\mu^{BC}_{z,z',\alpha,\beta}(\lambda) & = \frac{\displaystyle \prod_{1 \leq i < j \leq l} \left( \left(b_i + \vartheta \right)^2 - \left(b_j + \vartheta \right)^2 \right)^2}{Z_l(z,z',\alpha,\beta)} \prod_{i=1}^l W_{z, z', \alpha, \beta; l}(b_i),
\allowdisplaybreaks \\[5pt]
W_{z, z', \alpha, \beta; l}(x) & = \frac{\displaystyle \left(x + \vartheta \right) \frac{\Gamma(x+2\vartheta) \Gamma(x+\alpha+1)}{\Gamma(x+\beta+1) \Gamma(x+1)}}{
\Gamma(z-x+l) \Gamma(z'-x+l) \Gamma(z+x+l+2\vartheta) \Gamma(z'+x+l+2\vartheta)},
\allowdisplaybreaks \\[5pt]
Z_l(z, z', \alpha, \beta) & = \prod_{i=1}^l \frac{\Gamma(z+z'+\beta+i) \Gamma(\alpha+i) \Gamma(i)}{\Gamma(z+i) \Gamma(z+\beta+i) \Gamma(z'+\beta+i)\Gamma(z+z'+l+\alpha+\beta+i)},
\end{align*}
where $b_i := \lambda_i + l - i$ and $\vartheta = \frac{\alpha+\beta+1}{2}$.
Note that $Z_l(z, z', \alpha, \beta)$ is the normalization constant.
In~\cite{Cuenca18,Cuenca18II}, Cuenca constructed an explicit kernel for the corresponding point process and showed a relation to the one for the $zw$-measure.
Furthermore, as described in~\cite{Cuenca18}, there are special values of the pairs $(\alpha, \beta)$ (there denoted $(a,b)$) that correspond to the limits of symmetric spaces first examined in~\cite{OO06}, where the $BC$ $z$-measure describes an approximation of the spectral measure from a generalization of the biregular representation at finite values.
As given in~\cite[Sec.~8]{BO05}, the $BC$ $z$-measure can be constructed from multivariate Jacobi polynomials, which are $BCD$ analogs of Jack polynomials and are characters for the type $BCD$ irreducible representations when suitably normalized~\cite[Thm.~1.2]{OO98}.
This can also be considered as a type $BC$ Weyl group, the group of signed permutations, analog of $z$-measure from the Plancharel measure.

Our goal is to show that our measure $\mu_{n,k}$ is equal to a specialization of $\mu^{BC}_{z,z',\alpha,\beta}$.
Comparing $\mu^{BC}_{z,z',\alpha,\beta}$ with our measures, we should set $a_i = b_i + \vartheta$, and hence we need to have $\alpha + \beta = -1, 0, 1$ (so $\vartheta = 0, \frac{1}{2}, 1$, respectively) to make our coordinates to agree.
This and the symmetric space description suggests we should take $(\alpha, \beta) = (\pm 1/2, \pm 1/2)$, and these indeed yield our desired specializations.
For the case of $\alpha = \beta = 0$, we will need an ``odd'' measure that would correspond to the skew Howe duality for $(\Or_{2l+1}, \SO_{2k+1})$ (or $(\SO_{2l}, \Pin_{2k+1})$) if it existed:
\begin{align*}
  \mu_{n,k}(\lambda) & = Z \cdot \dim V_{\Or_{2l+1}}(\lambda) \cdot\dim V_{\SO_{2k+1}}(\overline{\lambda}')
= Z \cdot \dim V_{\SO_{2l+1}}(\lambda) \cdot M_1^D(\lambda+\fw_n)
  \\ & = \widetilde{Z} \frac{\displaystyle \prod_{i=1}^l a_i \times \prod_{1 \leq i < j \leq l} (a_i^2 - a_j^2)^2}{\displaystyle \prod_{i=1}^l (k+l-a_i-1/2)! (k+l+a_i-1/2)!}
\end{align*}
for some normalization constant $Z$ and
\[
\widetilde{Z} = Z \frac{\displaystyle \prod_{i=1}^l (2k+2n-2i+1)!}{\displaystyle \prod_{i=1}^l (l-i+1/2) i! \times \prod_{1 \leq i < j \leq l} (2l+1-i-j)}.
\]

\begin{table}
\[
\begin{array}{cccccccc}
\toprule
(\alpha, \beta) & (1/2, 1/2) & (1/2, -1/2) & (-1/2, -1/2) & (0,0)
\\ \midrule
G / K & (\Sp_{2l} \times \Sp_{2l}) / \Sp_{2l} & (\Or_{2l+1} \times \Or_{2l+1}) / \Or_{2l+1} & (\Or_{2l} \times \Or_{2l}) / \Or_{2l} & U_{2n} / (U_n \times U_n)
\\ \midrule
(G_1, G_2) & (\Sp_{2l}, \Sp_{2k}) & (\SO_{2l+1}, \Pin_{2k}) & (\Or_{2l}, \SO_{2k}) & \text{``}(\Or_{2l+1}, \SO_{2k+1})\text{''}
\\ \bottomrule
\end{array}
\]
\caption{The values of the parameters $(\alpha, \beta)$, the corresponding limit of symmetric spaces $G / K$, and the corresponding skew Howe dual pair $(G_1, G_2)$.}
\label{table:BC_values_limits}
\end{table}

\begin{thm}
\label{thm:BC_z_measure_specialization}
Let $k$ be an even positive integer. For $(\alpha, \beta)$ and the corresponding group given by Table~\ref{table:BC_values_limits}, we have
\[
\mu_{n,k}(\lambda) = (-1)^{\sum_{i=1}^l a_i} C_{n,k} \cdot \mu^{BC}_{z,z',\alpha,\beta}(\lambda).
\]
and $C_{n,k}$ is a constant that does not depend on $\lambda$.
\end{thm}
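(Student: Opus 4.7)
The plan is to verify the identity case-by-case for the three dual pairs in Table~\ref{table:BC_values_limits} by matching the explicit product formulas for $\mu_{n,k}(\lambda)$ derived in Sections~\ref{sec:limit-shape-series-B}, \ref{sec:limit-shape-series-C}, \ref{sec:limit-shape-series-D} (equations~\eqref{eq:so2p1_coord_measure}, \eqref{eq:sp_coord_measure}, \eqref{eq:so2n_coord_measure}) against the explicit formula defining $\mu^{BC}_{z,z',\alpha,\beta}(\lambda)$. Both measures already have the same squared Vandermonde factor $\prod_{i<j}\bigl((b_i+\vartheta)^2-(b_j+\vartheta)^2\bigr)^{2}$ (up to the coordinate rescaling $a_i = b_i+\vartheta$ in the $\Sp$ case and $a_i = 2(b_i+\vartheta)$ in the $\SO$ cases, which only contributes an overall factor $4^{\binom{l}{2}}$ absorbable into $C_{n,k}$), so the content of the theorem is entirely the matching of the single-variable weights $W_{z,z',\alpha,\beta;l}(b_i)$ against the corresponding factorials.

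First I would identify the specialization of $(z,z')$ by dimension-counting. Guided by the type $A$ analogy in which the Krawtchouk ensemble arises from the $z$-measure at $(z,z')=(n,-k)$ (see the discussion preceding Equation~\eqref{eq:zeta_cauchy_measure}), the expected choice is $z=k$, $z'=-k-2\vartheta$, so that $z+z'+2\vartheta=0$. Substituting $z=k$ into the two Gamma factors $\Gamma(z-b_i+l)$ and $\Gamma(z+b_i+l+2\vartheta)$ yields precisely $(k+l-a_i)!$ and $(k+l+a_i)!$ after the identification $a_i=b_i+\vartheta$, reproducing the factorial denominator of the $\Sp$ weight in Equation~\eqref{eq:sp_coord_measure} and (after the doubling of $a_i$) of the $\SO$ weights in Equations~\eqref{eq:so2p1_coord_measure} and~\eqref{eq:so2n_coord_measure}. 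The complementary factors $\Gamma(z'-b_i+l)$ and $\Gamma(z'+b_i+l+2\vartheta)$ with $z'=-k-2\vartheta$ have non-positive arguments, so I would invoke the reflection formula $\Gamma(x)\Gamma(1-x)=\pi/\sin(\pi x)$ to convert each into $(-1)^{b_i}$ times a factorial at a positive argument, producing the overall sign $(-1)^{\sum a_i}$ (after tracking the $\vartheta$-shifts $a_i=b_i+\vartheta$ and an overall $\lambda$-independent sign that goes into $C_{n,k}$).

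Next I would verify that the numerator $(x+\vartheta)\Gamma(x+2\vartheta)\Gamma(x+\alpha+1)/[\Gamma(x+\beta+1)\Gamma(x+1)]$ reproduces exactly the $a_i$-dependent factors of $\mu_{n,k}$: for $(\alpha,\beta)=(1/2,1/2)$ the ratio $\Gamma(x+\alpha+1)/\Gamma(x+\beta+1)=1$, $\Gamma(x+2)/\Gamma(x+1)=x+1=a_i$, and the prefactor $(x+\vartheta)=a_i$ together yield $a_i^2$, matching $\prod_s a_s^2$ in~\eqref{eq:sp_coord_measure}; for $(\alpha,\beta)=(1/2,-1/2)$ the Gamma ratio contributes an extra $x+1/2$, and combined with $(x+\vartheta)=x+1/2$ and $\Gamma(x+1)/\Gamma(x+1)=1$ (after using $2\vartheta=1$), one recovers the $a_i^2$ factor in~\eqref{eq:so2p1_coord_measure} up to a power of $2$; the case $(-1/2,-1/2)$ is analogous and yields~\eqref{eq:so2n_coord_measure}. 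All $a_i$-independent factors (powers of $2$, the constant $Z_l(z,z',\alpha,\beta)$, the prefactors $\prod_m (2k+2m-2)!$ and $\prod_{i<j}(j-i)(2l-i-j)$, and the trigonometric constants from the reflection formula) combine into a single constant $C_{n,k}$; since both sides are normalized probability measures (up to the sign), this constant is automatically determined and need not be computed explicitly.

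The main obstacle I anticipate is the bookkeeping of signs and half-integer Gamma values in the $\SO$ cases: when $\vartheta\in\{0,1/2\}$ the arguments of the reflected Gamma factors land on (half-)integers where $\sin(\pi x)$ takes values $\pm 1$, and the product over $i$ must collapse cleanly into a single global sign $(-1)^{\sum a_i}$ times a $\lambda$-independent constant. A secondary technical point is reconciling the factor-of-two difference between the paper's $a_i$ conventions (which are $2(b_i+\vartheta)$ for the $\SO$ series and $b_i+\vartheta$ for the $\Sp$ series) with the uniform convention $a_i=b_i+\vartheta$ underlying the $BC$ $z$-measure; this changes only the Vandermonde scaling and the interpretation of $\sum a_i$ modulo $2$, both of which are absorbed into $C_{n,k}$. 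Once the signs are pinned down, the identity follows from a direct comparison.
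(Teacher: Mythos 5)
Your overall strategy (direct comparison of the explicit product formulas~\eqref{eq:so2p1_coord_measure}, \eqref{eq:sp_coord_measure}, \eqref{eq:so2n_coord_measure} with the $BC$ $z$-measure weight, a degenerate integer specialization of $(z,z')$, and Euler's reflection formula to produce the sign) is the same as the paper's, and your analysis of the numerator factors $(x+\vartheta)\Gamma(x+2\vartheta)\Gamma(x+\alpha+1)/[\Gamma(x+\beta+1)\Gamma(x+1)]$ is correct. The gap is in the key step: your specialization $z=k$, $z'=-k-2\vartheta$ is wrong, and the argument fails exactly there. With your choice the two $z'$-factors are $\Gamma(z'-b_i+l)=\Gamma(l-k-2\vartheta-b_i)$ and $\Gamma(z'+b_i+l+2\vartheta)=\Gamma(l+b_i-k)$, whose arguments are \emph{integers}, since $2\vartheta\in\{0,1,2\}$. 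For any partition with some $b_i\leq k-l$ --- for instance $\lambda=\emptyset$ whenever $k\geq l$ --- the argument $l+b_i-k$ is a non-positive integer, where $1/\Gamma$ vanishes identically; hence $W_{z,z',\alpha,\beta;l}(b_i)=0$ and your specialized measure assigns zero mass to partitions on which $\mu_{n,k}$ is strictly positive. The reflection formula cannot repair this: $\Gamma(x)\Gamma(1-x)=\pi/\sin(\pi x)$ is an identity only for non-integer $x$ and degenerates at integers, so there is no conversion of these factors into ``$(-1)^{b_i}$ times a factorial.'' A careful limit $z'\to -k-2\vartheta$ does not help either, because the order of vanishing of $\prod_i W(b_i)$ depends on $\lambda$ (through the number of indices $i$ with $b_i\leq k-l$), so the renormalized limit concentrates on a proper subfamily of the box rather than reproducing $\mu_{n,k}$. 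A symptom of the same problem is your sign count: two factors each contributing $(-1)^{b_i}$ would give $+1$, not the $(-1)^{\sum_i a_i}$ in the statement.

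The structural point you missed is that in type $BC$ only \emph{one} of the two parameters may be used to enforce the box truncation. The paper keeps $z=k$, which makes $\Gamma(z-b_i+l)$ hit a pole exactly when $\lambda_1>k$ (this is what cuts the support down to $\lambda\subseteq k^l$), and then chooses $z'=\tfrac{1}{2}-l-\vartheta$, so that the two $z'$-factors become $\Gamma(\tfrac{1}{2}-a_i)\Gamma(\tfrac{1}{2}+a_i)$ with $a_i=b_i+\vartheta$. For $a_i\in\ZZ_{>0}$ these arguments are half-integers, the reflection formula applies cleanly and yields $\pi/\sin\pi(\tfrac{1}{2}+a_i)=(-1)^{a_i}\pi$, which is precisely the sign $(-1)^{\sum_i a_i}$ of the theorem together with a $\lambda$-independent factor $\pi^l$ absorbed into $C_{n,k}$; the remaining two factors give the factorials $(k+l+\vartheta-1\pm a_i)!$ matching the skew Howe measures. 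The type $A$ analogy misled you: there $z$ and $z'$ enter through content factors, so a positive integer $z$ and a negative integer $z'$ truncate rows and columns independently, whereas in the $BC$ weight both the $z$- and the $z'$-factors act on the same symmetrized coordinates $b_i+\vartheta$, so making both of them integer-degenerate annihilates the measure instead of truncating it twice.
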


\begin{proof}
We first recall some basic facts about the Gamma function $\Gamma(z)$, where $z \in \CC \setminus \ZZ_{\leq 0}$.
In particular, it satisfies
\[
\Gamma(1) = 1,
\qquad\qquad
\Gamma(z+1) = z \Gamma(z),
\qquad\qquad
\Gamma(1-z)\Gamma(z) = \frac{\pi}{\sin(\pi z)},
\]
where the last identity (Euler's reflection formula) holds if and only if $z \notin \ZZ$.
Note that for any positive integer $m$, we have $\Gamma(m) = (m-1)!$.

As previously mentioned, we let
\[
a_i = b_i + \vartheta = \lambda_i + (n - i) + \frac{\alpha+\beta+1}{2}.
\]
Therefore, we have
\begin{align*}
\mu^{BC}_{z,z',\alpha,\beta}(\lambda) & = \frac{\displaystyle \prod_{1 \leq i < j \leq l} \left( (a_i)^2 - (a_j)^2 \right)^2}{Z_n(z,z',\alpha,\beta)} \prod_{i=1}^l W_{z, z', \alpha, \beta; l}(b_i),
\allowdisplaybreaks \\[5pt]
W_{z, z', \frac{1}{2}, \frac{1}{2}; l}(b_i) & = \frac{\displaystyle a_i \frac{\Gamma(a_i+1)\Gamma(a_i+1/2)}{\Gamma(a_i+1/2)\Gamma(a_i)}}{\Gamma(z-a_i+l+1)\Gamma(z'-a_i+l+1)\Gamma(z+a_i+l+1)\Gamma(z'+a_i+l+1)}
\\ & = \frac{\displaystyle a_i^2}{\Gamma(z-a_i+l+1)\Gamma(z'-a_i+l+1)\Gamma(z+a_i+l+1)\Gamma(z'+a_i+l+1)},
\allowdisplaybreaks \\[5pt]
W_{z, z', \frac{1}{2}, -\frac{1}{2}; l}(b_i) & = \frac{\displaystyle a_i \frac{\Gamma(a_i+1/2)\Gamma(a_i+1)}{\Gamma(a_i)\Gamma(a_i+1/2)}}{\Gamma(z-a_i+l+1/2)\Gamma(z'-a_i+l+1/2)\Gamma(z+a_i+l+1/2)\Gamma(z'+a_i+l+1/2)}
\\ & = \frac{\displaystyle a_i^2}{\Gamma(z-a_i+l+1/2)\Gamma(z'-a_i+l+1/2)\Gamma(z+a_i+l+1/2)\Gamma(z'+a_i+l+1/2)},
\allowdisplaybreaks \\[5pt]
W_{z, z', -\frac{1}{2}, -\frac{1}{2}; l}(b_i) & = \frac{\displaystyle a_i \frac{\Gamma(a_i)\Gamma(a_i+1/2)}{\Gamma(a_i+1/2)\Gamma(a_i+1)}}{\Gamma(z-a_i+l)\Gamma(z'-a_i+l)\Gamma(z+a_i+l)\Gamma(z'+a_i+l)}
\\ & = \frac{\displaystyle 1}{\Gamma(z-a_i+l)\Gamma(z'-a_i+l)\Gamma(z+a_i+l)\Gamma(z'+a_i+l)},
\allowdisplaybreaks \\[5pt]
W_{z, z', 0, 0; l}(b_i) & = \frac{\displaystyle a_i \frac{\Gamma(a_i+1/2)\Gamma(a_i+1/2)}{\Gamma(a_i+1/2)\Gamma(a_i+1/2)}}{\Gamma(z-a_i+l+1/2)\Gamma(z'-a_i+l+1/2)\Gamma(z+a_i+l+1/2)\Gamma(z'+a_i+l+1/2)}
\\ & = \frac{\displaystyle a_i}{\Gamma(z-a_i+l+1/2)\Gamma(z'-a_i+l+1/2)\Gamma(z+a_i+l+1/2)\Gamma(z'+a_i+l+1/2)},
\end{align*}
Next, we set $z = k$ and $z' = 1/2 - l - \vartheta$, and hence every denominator above is equal to
\begin{align*}
& \Gamma(k-a_i+l+\vartheta)\Gamma(1/2-a_i)\Gamma(k+a_i+l+\vartheta)\Gamma(1/2+a_i)
\\ & \hspace{120pt} = (k-a_i+l+\vartheta-1)!(k+a_i+l+\vartheta-1)! \frac{\pi}{\sin \pi(1/2+a_i)}
\\ & \hspace{120pt} = (-1)^{a_i} \pi (k-a_i+l+\vartheta-1)! (k+a_i+l+\vartheta-1)!,
\end{align*}
where we used Euler's reflection formula for the first equality (recall that $a_i \in \ZZ_{>0}$).
The claim follows by comparing these formulas to the corresponding measures.
\end{proof}

From this connection, the relationship between the kernels in~\cite{Cuenca18} could be seen as the reflection of the fact that we essentially get the same limit shapes for type $A$ as for types $BCD$.
The agreement of the limit shapes for types $BCD$ can also be seen as coming from the fact they are all controlled by the $BC$ $z$-measure in the limit as $n \to \infty$.

Additionally, our measure equals the spectral measure $\mu_{l,\alpha,\beta}^{\omega}$ for an extremal character $\omega$ of a ``big'' group restricted down to its rank $l$ subgroup (up to an overall constant).
This is an immediate consequence of~\cite[Prop.~5.1]{OO12}, that the (appropriately normalized) multivariate Jacobi polynomials $P_{\lambda}(\mathbf{1}^l) = \dim V_{G_1}(\lambda)$ with the rank of $G_1$ being $l$, and the definition of $\mu_{n,k}$, where $\mathbf{1}^m = (1, \dotsc, 1)$ be the sequence with every entry $1$ of length $m$.
We note there is a minor typo in~\cite[Eq.~(26)]{OO12}, where the leading factor should be $\prod_{j=1}^K \frac{\beta_j(2-\beta_j)}{2}$.

\begin{cor}
\label{cor:spectral_restriction}
Fix the extremal character $\omega = (0, \mathbf{1}^k, k)$.
For $(\alpha, \beta)$ and the corresponding group given by Table~\ref{table:BC_values_limits}, we have $\mu_{n,k} = \mu_{l, \alpha, \beta}^{\omega}$.
\end{cor}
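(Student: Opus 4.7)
The plan is to invoke~\cite[Prop.~5.1]{OO12} directly and identify each of its factors with the corresponding factor in $\mu_{n,k}(\lambda)$. That proposition gives an explicit formula for $\mu_{l,\alpha,\beta}^\omega(\lambda)$ as a product of three pieces: (i) a $\lambda$-independent normalization constant (the one whose prefactor $\prod_{j=1}^K \beta_j(2-\beta_j)/2$ is typo-corrected in the text above), (ii) the value $P_\lambda(\mathbf{1}^l;\alpha,\beta)$ of the multivariate Jacobi polynomial associated with $(\alpha,\beta)$, and (iii) a product of single-variable weights coming from the extremal character $\omega$. The first step is to expand this formula at $\omega=(0,\mathbf{1}^k,k)$, noting that the continuous part vanishes while the $k$ atoms at $1$ are exactly what specializes the $(z,z')$-parameters of Cuenca's $BC$ $z$-measure to $z=k$ and $z'=1/2-l-\vartheta$, precisely the substitution already used in the proof of Theorem~\ref{thm:BC_z_measure_specialization}.

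Second, I would apply~\cite[Thm.~1.2]{OO98} to rewrite $P_\lambda(\mathbf{1}^l;\alpha,\beta)$ (after the appropriate normalization made in~\cite{OO12}) as $\dim V_{G_1}(\lambda)$ for the rank-$l$ group $G_1$ of Table~\ref{table:BC_values_limits}. This step accounts for the $\dim V_{G_1}(\lambda)$ factor in the definition~\eqref{eq:prob_measures} of $\mu_{n,k}$. The single-variable weight in (iii) is then, by construction, the same $W_{z,z',\alpha,\beta;l}(b_i)$ appearing in $\mu^{BC}_{z,z',\alpha,\beta}$ at the specialized parameters; Theorem~\ref{thm:BC_z_measure_specialization} already turns the product $\prod_i W_{z,z',\alpha,\beta;l}(b_i)$ together with the Vandermonde-squared factor into $\dim V_{G_2}(\overline{\lambda}')$ up to the sign $(-1)^{\sum a_i}$ and a $\lambda$-independent constant.

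The main obstacle, and essentially the only real content left, is bookkeeping the $\lambda$-independent constants. Concretely I expect to need to check that (a) the sign $(-1)^{\sum a_i}$ from Theorem~\ref{thm:BC_z_measure_specialization} is cancelled by a matching sign arising from the extremal-character specialization of the $Z_l(z,z',\alpha,\beta)$ factor in~\cite[Prop.~5.1]{OO12}, since $\mu^\omega_{l,\alpha,\beta}$ is manifestly nonnegative, and (b) the resulting $\lambda$-independent constant matches the denominator $2^{nk}$ or $2^{nk/2}$ of~\eqref{eq:prob_measures}. Both reduce to evaluating ratios of Gamma functions at half-integer arguments via Euler's reflection formula (as in the proof of Theorem~\ref{thm:BC_z_measure_specialization}) and combining them with the typo-corrected prefactor; this is routine but must be carried out separately in each of the four lines of Table~\ref{table:BC_values_limits} because $\vartheta=0,\tfrac{1}{2},1$ give slightly different Gamma products. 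Once these constants match in a single case, the equality $\mu_{n,k}=\mu_{l,\alpha,\beta}^{\omega}$ follows since both sides are probability measures.
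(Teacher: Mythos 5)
Your two shared ingredients are the right ones (\cite[Prop.~5.1]{OO12} and the identity $P_\lambda(\mathbf{1}^l)=\dim V_{G_1}(\lambda)$ from \cite[Thm.~1.2]{OO98}), but the detour through Cuenca's measure and Theorem~\ref{thm:BC_z_measure_specialization} breaks down at two concrete points. First, the sign bookkeeping in your step (a) cannot work: $\sum_i a_i\equiv\abs{\lambda}+\mathrm{const}\pmod 2$, so the factor $(-1)^{\sum a_i}$ in Theorem~\ref{thm:BC_z_measure_specialization} genuinely depends on $\lambda$, while the specialization of $Z_l(z,z',\alpha,\beta)$, or of any other $\lambda$-independent prefactor, can only produce a $\lambda$-independent sign. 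In the proof of Theorem~\ref{thm:BC_z_measure_specialization} the signs arise \emph{per particle}, from Euler's reflection formula applied to $\Gamma(1/2-a_i)\Gamma(1/2+a_i)$ inside each weight $W_{z,z',\alpha,\beta;l}(b_i)$; hence, if the spectral measure of $\omega=(0,\mathbf{1}^k,k)$ is to be nonnegative, its weights cannot literally coincide with the $W_{z,z',\alpha,\beta;l}(b_i)$ as you assert in step (iii) --- they must differ by $(-1)^{a_i}$ in each coordinate, so your steps (ii)/(iii) and (a) contradict each other. Second, the structure you attribute to \cite[Prop.~5.1]{OO12} --- a constant times $P_\lambda(\mathbf{1}^l)$ times a product of single-variable weights --- cannot reproduce $\mu_{n,k}$: it contains only the single Vandermonde $\prod_{i<j}(a_i^2-a_j^2)$ sitting inside $\dim V_{G_1}(\lambda)$, whereas $\mu_{n,k}(\lambda)\propto\prod_{i<j}(a_i^2-a_j^2)^2\times\prod_i(\text{single-variable factors})$ by~\eqref{eq:so2p1_coord_measure}, \eqref{eq:sp_coord_measure}, \eqref{eq:so2n_coord_measure}. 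Indeed you invoke ``the Vandermonde-squared factor'' when applying Theorem~\ref{thm:BC_z_measure_specialization}, but no such factor exists in the formula you started from. Relatedly, the claim that the $k$ atoms at $1$ force $z=k$, $z'=1/2-l-\vartheta$ is asserted rather than proved; since the specialized $BC$ $z$-measure is a \emph{signed} measure at these parameters (that is exactly what $(-1)^{\sum a_i}$ records), it cannot literally equal the nonnegative spectral measure, and establishing the precise relation between them is essentially the content of the corollary you are trying to prove, given Theorem~\ref{thm:BC_z_measure_specialization}.

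The paper's own argument avoids all of this and is purely representation-theoretic: \cite[Prop.~5.1]{OO12} expresses the spectral measure as $P_\lambda(\mathbf{1}^l)$ times the coefficient of the normalized Jacobi polynomial in the expansion of $\omega$ restricted to the rank-$l$ subgroup; for $\omega=(0,\mathbf{1}^k,k)$ that restriction is the normalized character of $\bigl(\bigwedge\CC^{n}\bigr)^{\otimes k}$, so by skew Howe duality the coefficients are $\dim V_{G_2}(\overline{\lambda}')/2^{nk/2}$, and multiplying by $P_\lambda(\mathbf{1}^l)=\dim V_{G_1}(\lambda)$ yields $\mu_{n,k}$ directly from its definition~\eqref{eq:prob_measures} --- no Gamma-function evaluations, no signs, and no case-by-case constant matching. (The constant matching was dispensable in your plan as well: once proportionality with a $\lambda$-independent constant is established, summing over $\lambda$ forces the constant to be $1$, since both sides are probability measures.) If you insist on a formula-matching proof, compare \cite[Prop.~5.1]{OO12} directly against the explicit product formulas~\eqref{eq:so2p1_coord_measure}, \eqref{eq:sp_coord_measure}, \eqref{eq:so2n_coord_measure}; routing through Cuenca's measure only imports the sign problem that you then have to undo.
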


\begin{remark}
\label{rem:spectral_det}
There is another formula for the restricted spectral measures for $\beta = -\frac{1}{2}$ given in~\cite[Thm.~2.8]{BK10} involving a determinant.
A natural question is if the matrix given there, which we will denote by $\mathcal{M}(\lambda)$, is equal to ours up to some power of $2$ multiplying each entry, but this turns out to not be the case.
If we consider $(\alpha, \beta) = (1/2,-1/2)$ and $M^{BC}_1(\emptyset)$ for $l = 3$ and $k = 4$, then we compute:
\[
M^{BC}_1(\emptyset) =
\begin{bmatrix}
275 & 75 & 20 \\
297 & 90 & 28 \\
132 & 42 & 14
\end{bmatrix}
\qquad\qquad
\mathcal{M}(\emptyset) =
\begin{bmatrix}
\frac{55}{1024} & \frac{35}{512} & \frac{5}{64} \\[3pt]
\frac{49}{1024} & \frac{17}{256} & \frac{7}{64} \\[3pt]
\frac{5}{256} & \frac{7}{256} & \frac{7}{128}
\end{bmatrix}
\]
Note that there is no element in $M^{BC}_1(\emptyset)$ that is a multiple of $17$.
We can also extend the construction for the case $\alpha = \beta = \frac{1}{2}$ by using the normalized Jacobi polynomials $\mathsf{J}^{(a,b)}_j(x) = \frac{J^{(a,b)}_j(x)}{2c_{j+1}}$, where $c_j = \frac{1 \cdot 3 \dotsm (2j-1)}{2 \cdot 4 \dotsm 2j}$ for $j > 0$ and $c_0 = 1$.
\end{remark}


\subsubsection{Relationship with Howe duality}
\label{sec:howe_duality}

For completeness, we discuss how our results in lozenge tilings are related to Howe duality.
Here, we assume $k \in \ZZ$ (not necessarily even).
We start with the classical result of Howe duality for $(\GL_n, \GL_k)$ with restricting the partition $\lambda$ to be inside of an $\min(n,k) \times m$ rectangle from the lozenge tiling description, yielding the measure $\mu_{n,k}^{\square m}$ from the introduction.
Without loss of generality, we assume $k \leq n$.
We can take a half hexagon tiling parameterizing the crystal $B(\lambda)$ for $\GL_n$ and the one for $B(\lambda)$ for $\GL_k$ and join them together at the top point after reflecting the $\GL_k$ across the vertical axis.
We can then ignore the portion of the $\GL_n$ half hexagon that is fixed by $\lambda_{k+1} = \cdots = \lambda_n = 0$.
This gives us a lozenge tiling of a partition inside of an $n \times m \times k$ box.
As an example, consider $n = 4$, $k = 2$, and $m = 1$, one such lozenge tiling for $\lambda = (1,1,0,0) \subseteq 1^2$ is
\[
\iftikz
\begin{tikzpicture}[scale=0.5,baseline=0]
\draw (150:2) -- ++(150:2) -- ++(0,1) -- ++(30:4) -- ++(-30:2) -- ++(0,-1) -- ++(-150:4);
\foreach \y in {3,4}
  \draw[fill=black!50] (0,\y) -- ++(30:1) -- ++(150:1) -- ++ (30:-1) -- cycle;
\foreach \y in {0,1}
  \draw[color=black!20,fill=black!10] (0,\y) -- ++(30:1) -- ++(150:1) -- ++ (30:-1) -- cycle;
\draw[color=black!20,fill=black!10] (150:1) -- ++(30:1) -- ++(150:1) -- ++ (30:-1) -- cycle;
\foreach \y in {1,3}
  \draw[fill=blue!30] (150:1) ++ (0,\y) -- ++(30:1) -- ++(150:1) -- ++ (30:-1) -- cycle;
\foreach \y in {0,2}
  \draw[fill=blue!30] (150:2) ++ (0,\y) -- ++(30:1) -- ++(150:1) -- ++ (30:-1) -- cycle;
\draw[fill=blue!30] (150:3) -- ++(30:1) -- ++(150:1) -- ++ (30:-1) -- cycle;
\draw[fill=blue!30] (30:1) ++ (0,3) -- ++(30:1) -- ++(150:1) -- ++ (30:-1) -- cycle;
\draw[fill=green!30] (0,2)  -- ++(0,1) -- ++(150:1) -- ++ (0,-1) -- cycle;
\draw[fill=green!30] (150:2) ++ (0,1)  -- ++(0,1) -- ++(150:1) -- ++ (0,-1) -- cycle;
\draw[fill=red!30] (150:1) ++ (0,2) -- ++(0,1) -- ++ (210:1) -- ++(0,-1) -- cycle;
\draw[fill=red!30] (150:3) ++ (0,1) -- ++(0,1) -- ++ (210:1) -- ++(0,-1) -- cycle;
\foreach \x in {1,2}
  \draw[fill=red!30] (30:\x) ++ (0,2) -- ++(0,1) -- ++ (210:1) -- ++(0,-1) -- cycle;
\draw[dotted] (0,-0.5) -- (0,5.5);
\end{tikzpicture}
\fi
\]
where we have drawn in the fixed portion in light gray.

Now we describe Howe duality for the pairs $(\SO_n, \spn_{2k})$ or $(\Sp_{2k}, \so_{2l})$ using lozenge tilings with bounding $\lambda$ inside of a rectangle.
We can perform the analogous joining of the quarter hexagons for the $\SO_n$ and $\Sp_{2k}$ representations to form a half hexagon and removing the fixed parts, where we also remove the bottom $B$ tiles and leftmost $R$ tiles from the $\Sp_{2k}$ tiling (they are completely fixed).
The corresponding measure for $(\SO_n, \Sp_{2k})$ was investigated in~\cite[Lemma~2.2]{FN09}, which can be seen as a specialization of the $BC$ $z$-measure $\mu_{k,k+l,\alpha,\beta}^{BC}$ for $\ell(\lambda) \leq \min(k,n)$, where we set the variables in~\cite{FN09} to $N = l + k$ and $p = l$.

\section{Open Problems}
\label{sec:open_problems}

Here we gather some open problems and conjectures from this work.

Since we have $q$-analogs for our multiplicity and dimension formulas (hence our probability measures), a natural question is to determine how the parameter $q$ changes the limit shape.
For the case of $(\GL_k, \GL_n)$, we can compute one such $q$-analog probability measure
\begin{equation}
\label{eq:qprob_ps_A}
\mu_{n,k}^A(\lambda; q) = \frac{q^{\Abs{\lambda}} \dim_q\bigl( V_{\GL_n}(\lambda) \bigr) \cdot q^{\Abs{\overline{\lambda}'}} \dim_q\bigl( V _{\GL_n}(\overline{\lambda}') \bigr)}{N^A_{n,k}(q)},
\end{equation}
where
\[
N^A_{k,n}(q) = q^{P_{k-1} + (n-k) \binom{k}{2}} 2^k \prod_{i=1}^{k-1} (q^i + 1)^{2(k-i)} \times \prod_{j=k+1}^n \prod_{i=1}^k (q^{j-i} + 1)
\quad \text{ with } \quad
P_k = \frac{k(k+1)(2k+1)}{6} 
\]
the square pyramidal numbers~\oeis{A000330}~\cite{OEIS}.
We obtain~\eqref{eq:qprob_ps_A} by taking the principal specialization of the alternative form of the dual Cauchy identity
\begin{equation}
\label{eq:alt_dual_Cauchy}
\sum_{\lambda \subseteq k^n} s_{\lambda}(x_1, \dotsc, x_n) s_{\overline{\lambda}'}(y_1, \dotsc, y_k) = \prod_{i=1}^n \prod_{j=1}^k (x_i + y_j),
\end{equation}
which is constructed from the dual Cauchy identity by using
\[
\ch(V(\overline{\lambda}'))(y_1, \dotsc, y_k)
= \prod_{j=1}^k y_i^{\overline{\lambda}'_1-n} \ch(V(\lambda')^*)(y_1, \dotsc, y_k)
= \prod_{j=1}^k y_i^n \ch(V(\lambda'))(y_1^{-1}, \dotsc, y_k^{-1}).
\]
In particular, we substitute $y_i \mapsto y_i^{-1}$ to account for the $\lambda' \to \overline{\lambda}'$ change, and then we multiply by $y_1^n \cdots y_k^n$ to obtain~\eqref{eq:alt_dual_Cauchy}.
We can similarly construct other $q$-analogs of our probability measures using the principal specializations of the formulas from, \textit{e.g.},~\cite{proctor1993reflection}.

However, the measure $\mu_{n,k}^A(\lambda; q)$ is not a unique $q$-analogue of the measure $\mu_{n,k}(\lambda)$. Other choices are given by the following conjecture based on experimental data.

\begin{conj}
We have probability measures on all partitions $\lambda$ inside of an $n \times k$ rectangle:
\begin{subequations}
\label{eq:prob_conj_A}
\begin{align}
\mu_{n,k}^{A2}(\lambda; q) & = \frac{q^{\Abs{\overline{\lambda}}} \dim_q\bigl( V_{\GL_n}(\lambda) \bigr) \cdot q^{\Abs{\overline{\lambda}'}} \dim_q\bigl( V _{\GL_k}(\overline{\lambda}') \bigr)}{
\displaystyle 2 \prod_{i=1}^{k+1} (q^i + 1)^{k+2-i} \times \prod_{j=k+1}^n \prod_{i=1}^k (q^{j+2-i} + 1)
},
\label{eq:prob_conj_A2} \\
\mu_{n,k}^{A3}(\lambda; q) & = \frac{q^{\Abs{\overline{\lambda}}} \dim_q\bigl( V_{\GL_n}(\lambda) \bigr) \cdot q^{\abs{\overline{\lambda}'}+\Abs{\overline{\lambda}'}} \dim_q\bigl( V _{\GL_k}(\overline{\lambda}') \bigr)}{
\displaystyle \prod_{i=1}^{2k} (q^i + 1)^{k-\abs{k-i}} \times \prod_{j=k+1}^n \prod_{i=1}^k (q^{j+k-i} + 1)
}.
\label{eq:prob_conj_A3}
\end{align}
\end{subequations}
\end{conj}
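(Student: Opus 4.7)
My approach is to treat the two claims uniformly via principal specialization of the alternative form of the dual Cauchy identity (\ref{eq:alt_dual_Cauchy}). Positivity is automatic: $\dim_q V_{\GL_n}(\lambda)$ and $\dim_q V_{\GL_k}(\overline{\lambda}')$ both lie in $\ZZ_{\geq 0}[q]$ (this is a standard consequence of the hook content formula), the $q$-power shifts in the numerators are monomials with non-negative exponent, and the conjectured denominators are manifestly polynomials in $q$ with positive coefficients. The real content of the conjecture is therefore the normalization identity $\sum_{\lambda \subseteq k^n} \mathrm{Num}(\lambda;q) = \mathrm{Den}(q)$.

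The first key step is the classical identity $s_\mu(1,q,\dotsc,q^{N-1}) = q^{\Abs{\mu}} \dim_q V_{\GL_N}(\mu)$, which rewrites the $q$-dimensions in the numerators as principal specializations of Schur polynomials. Substituting $x_i = q^{i-1}$ and $y_j = q^{s+j-1}$ into (\ref{eq:alt_dual_Cauchy}), with a shift $s$ to be chosen, yields
\[
\sum_{\lambda \subseteq k^n} q^{\Abs{\lambda} + s\abs{\overline{\lambda}} + \Abs{\overline{\lambda}'}} \dim_q V_{\GL_n}(\lambda)\, \dim_q V_{\GL_k}(\overline{\lambda}') = \prod_{i=1}^n\prod_{j=1}^k \bigl(q^{i-1} + q^{s+j-1}\bigr).
\]
Using the elementary identities $\abs{\overline{\lambda}} = nk - \abs{\lambda}$ and $\Abs{\overline{\lambda}} = k\binom{n}{2} - (n-1)\abs{\lambda} + \Abs{\lambda}$, the $\lambda$-dependent part of the exponent $\Abs{\lambda} + s\abs{\overline{\lambda}}$ collapses to $\Abs{\overline{\lambda}}$ up to a $\lambda$-independent constant precisely when $s = n-1$, producing the target weighting of (\ref{eq:prob_conj_A2}); for (\ref{eq:prob_conj_A3}), the extra factor $q^{\abs{\overline{\lambda}'}} = q^{\abs{\overline{\lambda}}}$ is absorbed by taking $s = n$ instead.

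The second step is to identify the right-hand product with the conjectured denominator. After pulling out the constant $q^{k\binom{n}{2}}$ (respectively $q^{k\binom{n}{2} + nk}$ for $\mu^{A3}$), the product becomes $\prod_{i,j}(1 + q^{n+j-i-1})$, which regroups as $\prod_{m=0}^{n+k-2}(1+q^m)^{c_m}$, where $c_m$ is the number of pairs $(i,j)$ with $n+j-i-1 = m$; this count is piecewise-linear in $m$ and easy to tabulate from the three regimes $m < \min(n,k) - 1$, $\min(n,k)-1 \leq m \leq \max(n,k)-1$, and $m > \max(n,k)-1$. Completing the proof would amount to showing these multiplicities agree with the ones read off from $2\prod_{i=1}^{k+1}(q^i+1)^{k+2-i}\prod_{j=k+1}^n\prod_{i=1}^k(q^{j+2-i}+1)$.

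The main obstacle is precisely this final matching step. A direct check on a small case, e.g.\ $n = 2$, $k = 1$, gives left-hand side $(1+q^0)(1+q^1) = 2(1+q)$, while the stated denominator evaluates to $2(q+1)^2(q^2+1)(q^3+1)$; these differ not merely by a constant, so the discrepancy cannot be absorbed into an overall normalization. This suggests that either the specialization above is not the correct one and a different $q$-analog of (\ref{eq:alt_dual_Cauchy}) must be employed (for instance, one incorporating an auxiliary Cauchy kernel whose principal specialization supplies the missing factors $(q^m+1)^{\text{large}}$), or that the conjectured denominators (\ref{eq:prob_conj_A2})--(\ref{eq:prob_conj_A3}) contain index-range misprints and the ``correct'' product version is precisely $\prod_{i,j}(1+q^{n+j-i-1})$ arising from the natural principal specialization. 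Sorting this out --- ideally by running enough small experimental cases to pin down the correct denominator and then re-deriving it from the Schur identity --- is where the nontrivial work lies; everything else in the argument reduces to routine manipulation of $q$-products.
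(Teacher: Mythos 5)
There is no ``paper's own proof'' to compare against here: in the paper this statement is a conjecture supported only by experimental data, so your attempt has to stand on its own merits. Its skeleton is sound, and it is in fact the same mechanism the paper itself uses to derive the measure \eqref{eq:qprob_ps_A}. Your shift calculus is correct: specializing $x_i = q^{i-1}$, $y_j = q^{s+j-1}$ in \eqref{eq:alt_dual_Cauchy} and taking $s=n-1$, resp.\ $s=n$, shows that the numerators of \eqref{eq:prob_conj_A2}, resp.\ \eqref{eq:prob_conj_A3}, sum over $\lambda \subseteq k^n$ to
\[
\prod_{i=1}^{n}\prod_{j=1}^{k}\bigl(1+q^{i+j-2}\bigr),
\qquad\text{resp.}\qquad
\prod_{i=1}^{n}\prod_{j=1}^{k}\bigl(1+q^{n+j-i}\bigr).
\]

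The gap lies in how you interpret this. First, your hedge that ``the specialization above may not be the correct one'' is untenable: the sum of the numerators is a fixed polynomial in $q$, independent of how one computes it, so no alternative $q$-analog of the Cauchy kernel can change it. Consequently your $n=2$, $k=1$ check is not an ambiguity to be sorted out --- it is a refutation of \eqref{eq:prob_conj_A2} exactly as printed (even at $q=1$ with $n=k=2$ the numerators sum to $2^{nk}=16$ while the printed denominator evaluates to $128$), and the only possible repair is to replace that denominator by $\prod_{i,j}(1+q^{i+j-2})$. Second, you never ran the same test on \eqref{eq:prob_conj_A3}, where it succeeds, and the multiplicity matching you defer as ``the nontrivial work'' is a one-line reindexing. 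The multiplicity of $(1+q^m)$ in $\prod_{i,j}(1+q^{n+j-i})$ is $\#\bigl\{(a,b)\in[1,n]\times[1,k] : a+b=m+1\bigr\}$ (put $a=n+1-i$, $b=j$); in the printed denominator of \eqref{eq:prob_conj_A3} the factor $\prod_{i=1}^{2k}(q^i+1)^{k-\abs{k-i}}$ contributes $\#\bigl\{(a,b)\in[1,k]\times[1,k] : a+b=m+1\bigr\}$ and the factor $\prod_{j=k+1}^{n}\prod_{i=1}^{k}(q^{j+k-i}+1)$ contributes $\#\bigl\{(a,b)\in[k+1,n]\times[1,k] : a+b=m+1\bigr\}$ (put $a=j$, $b=k+1-i$); these add up to the required count because $[1,n]=[1,k]\sqcup[k+1,n]$ (for $n\geq k$, which the formulas implicitly assume). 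So, pushed one step further, your argument proves \eqref{eq:prob_conj_A3} and disproves \eqref{eq:prob_conj_A2} as stated; the indecisive conclusion is the only thing missing from your write-up.
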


We note that the numerators of~\eqref{eq:prob_conj_A} become
\begin{align*}
q^{\Abs{\overline{\lambda}}} \dim_q\bigl( V_{\GL_n}(\lambda) \bigr) \cdot q^{\Abs{\overline{\lambda}'}} \dim_q\bigl( V _{\GL_k}(\overline{\lambda}') \bigr)
& = 
q^{\Abs{\overline{\lambda}'}} \dim_q\bigl( V_{\GL_n}(\lambda) \bigr) \cdot M_q^A(\lambda),
\\
q^{\Abs{\overline{\lambda}}} \dim_q\bigl( V_{\GL_n}(\lambda) \bigr) \cdot q^{\abs{\overline{\lambda}'}+\Abs{\overline{\lambda}'}} \dim_q\bigl( V _{\GL_k}(\overline{\lambda}') \bigr)
& = 
q^{\abs{\overline{\lambda}'}+\Abs{\overline{\lambda}'}} \dim_q\bigl( V_{\GL_n}(\lambda) \bigr) \cdot M_q^A(\lambda).
\end{align*}
We also remark that we can rewrite the numerator of $\mu_{n,k}^{A2}(\overline{\lambda}; q)$ as the classical skew Howe duality
\[
q^{\Abs{\lambda}} \dim_q\bigl( V_{\GL_n}(\lambda) \bigr) \cdot q^{\Abs{\lambda'}} \dim_q\bigl( V _{\GL_k}(\lambda') \bigr)
\]
by using the equality $\dim_q V_{\GL_n}(\lambda) = \dim_q V_{\GL_n}(\overline{\lambda})$, which is immediate from the well-known fact $V_{\GL_n}(\lambda)^* \iso V_{\GL_n}(\overline{\lambda})$ up to a shift by the determinant representation (see also Theorem~\ref{thm:q_mult_dim_A}).

We have some initial data  computed by using a discrete steepest descent method that suggests that limit shapes are deformed in the limit $n,k\to\infty$, $q\to 1$ such that $\frac{k}{n}=c+\mathcal{O}\left(\frac{1}{n}\right)$, $q=1-\frac{\gamma}{n}$. 
For the measure $\mu_{n,k}^A(\lambda; q)$ in~\eqref{eq:qprob_ps_A}, we have produced the estimated limit shapes for various values of $\gamma$ in Figure~\ref{fig:q-diagrams-and-limit-shape}. Derivation of the formulas that describe these limit shapes remains an open problem. In the case $q=\mathrm{const}$, which corresponds to $\gamma=\pm\infty$ the limit shape degenerates to one of the straight horizontal lines, which are shown in black in Figure~\ref{fig:q-diagrams-and-limit-shape}. The upper line is for $q>1$ and the lower for $q<1$.

\begin{figure}[t]
  \includegraphics[width=10cm]{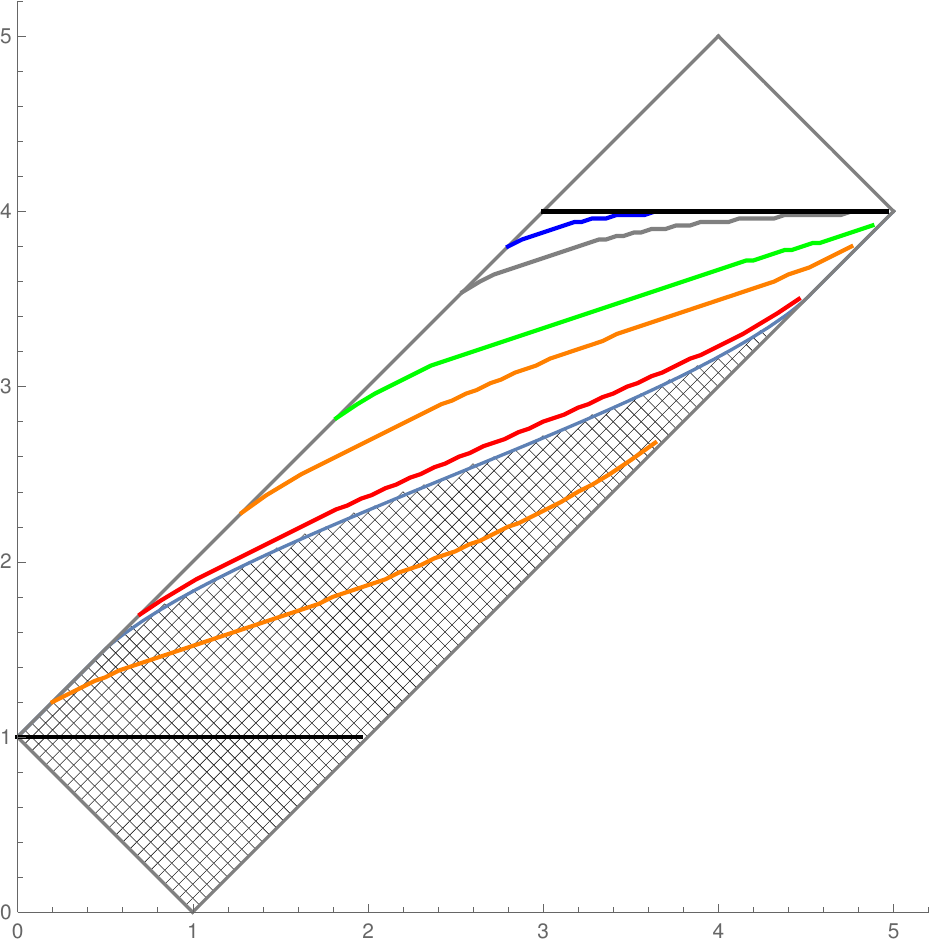}
\caption{The most probable diagram for $n=25, k=100$, the limit shape for $c=4$ and the upper boundaries for the most probable diagrams for the measure $\mu_{n,k}(\lambda;q)$ where $q=1-\gamma/n$ and $\gamma=-\frac{1}{2},\frac{1}{10},\frac{1}{2},2 ,10, 25$. Black horizontal lines correspond to $\gamma=+\infty$ (upper) and $\gamma=-\infty$ (lower). }
\label{fig:q-diagrams-and-limit-shape}
\end{figure}

An even more general problem is to describe the asymptotic behavior of the character measure that can be introduced for  $GL_{n}\times GL_{k}$ as follows:
\[
  \mu_{n,k}(\lambda|\{x_{i}\}_{i=1}^{n},\{y_{j}\}_{j=1}^{k})=\frac{\displaystyle \sum_{\lambda \subseteq k^n} s_{\lambda}(x_1, \dotsc, x_n) s_{\overline{\lambda}'}(y_1, \dotsc, y_k)}{\displaystyle \prod_{i=1}^n \prod_{j=1}^k (x_i + y_j)},
\]
and similarly for other dual pairs of groups. We suggest that the limit shape for $n,k\to\infty$ and $x_{i}=e^{\varphi(i/n)}, y_{j}=e^{\psi(j/n)}$ with smooth $\varphi,\psi$ is described by the Burgers equation.
The asymptotic behavior of the character in the infinite rank limit is related to the asymptotic of the Harish-Chandra--Itzykson--Zuber integral, which is described by Burgers equation as was derived in~\cite{Matytsin94} and proven in~\cite{guionnet2002}.

We now switch to looking at questions from the other skew Howe dual pairs.
To account for the sign difference in Theorem~\ref{thm:BC_z_measure_specialization}, we believe by using the ``Jack parameter'' $\xi < 0$ for the type $BC$ $z$-measure (which is analogous to the $\xi$ from the type $A$ $z$-measure), will yield a positive formula.
We note this extra parameter comes from the multivariate Jacobi polynomials~\cite{OO06,OO12}.
Furthermore, by defining $\eta = \xi/(1-\xi)$, we will obtain a parameter $0 < \eta < 1$ for a polynomial ensemble.
In particular, as a consequence of Corollary~\ref{cor:spectral_restriction}, we should obtain that these processes are specializations of a ``dual'' version of specialized Jacobi polynomial ensembles (see, \textit{e.g.},~\cite[Prop.~8.1]{BO05gamma} and~\cite[Thm.~5.7]{BK10}) in parallel to the case with $(\GL_n, \GL_k)$ with the Meixner and Krawtchouk ensembles.

\begin{conj}
The specialized $BC$ $z$-measure $\mu^{BC}_{k,1/2-l-\vartheta,\alpha,\beta}$ with the extra parameter $\xi < 0$ is equal to a discrete orthogonal polynomial ensemble with parameter $\eta = \xi/(1-\xi)$.
Moreover, this measure equals $\mu_{n,k}$ for a particular value of $\xi$.
\end{conj}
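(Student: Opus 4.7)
The plan is to first define the $\xi$-deformed $BC$ $z$-measure by inserting a factor of $\xi^{|\lambda|}$ (or equivalently $\eta^{|\lambda|}(1-\eta)^{c(l,k)}$ for an appropriate exponent $c$) into the weight and showing that the sum of the resulting expression over partitions with $\ell(\lambda)\le l$ has a closed-form normalization. At $\xi=1$ this must reduce to the original normalization $Z_l(z,z',\alpha,\beta)$ used in the paper. To obtain the normalization in closed form, I would follow the Type A strategy: express $\xi^{|\lambda|}\dim_q V_{G_1}(\lambda)$ via the principal specialization of a multivariate Jacobi character and then apply the $BC$-analog of the dual Cauchy identity (\textit{e.g.}, the identities in~\cite{proctor1993reflection} for the corresponding dual pairs). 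The closed form of $Z_l(z,z',\alpha,\beta;\xi)$ should be a product of Gamma functions, continuous in $\xi$, and this is the first place where I expect nontrivial bookkeeping.

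Once the normalization is established, the next step is to rewrite the measure in the standard form
\[
\mu^{BC}_{z,z',\alpha,\beta;\xi}(\lambda) = \frac{1}{\widetilde{Z}_l}\prod_{1\le i<j\le l}(a_i^2-a_j^2)^2 \prod_{i=1}^l \widetilde{W}(a_i;\xi),
\]
with $a_i=\lambda_i+l-i+\vartheta$ and a single-variable weight $\widetilde{W}(x;\xi)$ obtained from $W_{z,z',\alpha,\beta;l}(x)$ multiplied by $\xi^{x-\vartheta}$. The goal is to recognize $\widetilde{W}(\,\cdot\,;\xi)$ as the orthogonality weight for a known classical family of discrete orthogonal polynomials on a quadratic lattice. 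The natural candidate is a ``dual Hahn'' or specialized Askey--Wilson/$q$-Racah type family in the $x^2$ variable, which degenerates to the Krawtchouk and Meixner weights under the $(\GL_n,\GL_k)$ specialization (recovering~\cite[Prop.~4.1]{BO06}). Verifying this amounts to matching the ratio $\widetilde{W}(x+1;\xi)/\widetilde{W}(x;\xi)$ against the coefficient ratio in the standard Pearson difference equation for the candidate family; this is where I expect the main technical obstruction, because the correct normalization of both $z'=1/2-l-\vartheta$ and the Jack parameter $\eta=\xi/(1-\xi)$ must be tracked simultaneously to land on a bona fide ensemble rather than a signed measure.

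For the second half of the conjecture, the target value of $\xi$ is forced by the sign computation already performed in the proof of Theorem~\ref{thm:BC_z_measure_specialization}. Since $(-1)^{\sum_i a_i}=(-1)^{|\lambda|+\binom{l}{2}+l\vartheta}$, multiplying $\mu^{BC}_{z,z',\alpha,\beta}$ by $\xi^{|\lambda|}$ with $\xi=-1$ absorbs the parity factor up to a global constant independent of $\lambda$. The plan is therefore to set $\xi=-1$ (equivalently $\eta=\tfrac{1}{2}$), apply Euler's reflection formula to the denominators as in the proof of Theorem~\ref{thm:BC_z_measure_specialization}, and check that the resulting probability weights agree with $\mu_{n,k}(\lambda)$ term by term for each of the four choices of $(\alpha,\beta)$ in Table~\ref{table:BC_values_limits}. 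The normalization constants must then match automatically since both measures are probability measures on the same finite set of partitions.

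The main obstacles, in order of anticipated difficulty, are: (i) identifying the correct $BC$-analog of the Cauchy/dual Cauchy identity that yields a closed-form $Z_l(z,z',\alpha,\beta;\xi)$ after the specialization $z=k$, $z'=1/2-l-\vartheta$; (ii) matching $\widetilde{W}(\,\cdot\,;\xi)$ against a known discrete orthogonal polynomial family on the quadratic lattice (this likely requires a case analysis for the four pairs $(\alpha,\beta)\in\{\pm 1/2\}^2$); and (iii) showing positivity of the ensemble for $\xi<0$ with $\xi\ne -1$, since Euler's reflection cancellation was only verified at the special value; this will likely require a separate argument using positivity of the squared Vandermonde and a parity argument on $|\lambda|$.
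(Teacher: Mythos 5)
First, note that this statement is not proved in the paper at all: it appears in Section~\ref{sec:open_problems} as an open conjecture, so there is no proof of record to compare yours against, and what you have written is explicitly a strategy outline rather than a proof. Judged on its own terms, the outline contains one essentially complete piece, namely the ``moreover'' clause: if one \emph{defines} the $\xi$-deformation as multiplication of $\mu^{BC}_{k,1/2-l-\vartheta,\alpha,\beta}(\lambda)$ by $\xi^{\abs{\lambda}}$ followed by renormalization, then your parity computation $\sum_i a_i = \abs{\lambda} + \binom{l}{2} + l\vartheta$ together with Theorem~\ref{thm:BC_z_measure_specialization} shows that at $\xi=-1$ the factor $(-1)^{\sum_i a_i}\xi^{\abs{\lambda}}$ is $\lambda$-independent, and equality with $\mu_{n,k}$ then follows because both are probability measures on the same finite support. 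Two flags even here: for $(\alpha,\beta)=(1/2,-1/2)$ and $(0,0)$ one has $\vartheta=\frac{1}{2}$, so $l\vartheta$ need not be an integer and your ``constant'' is a priori a phase that must be absorbed jointly with $C_{n,k}$; and $\xi=-1$ gives $\eta=\xi/(1-\xi)=-\frac{1}{2}$, not $\frac{1}{2}$ --- your value corresponds to the type $A$ Krawtchouk convention $\eta=\xi/(\xi-1)$, so the transformation written in the conjecture and your claim cannot both be literally correct, and you pass over this inconsistency silently.

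The genuine gaps are all in the first, main clause. (i) You do not work with the deformed measure the conjecture refers to: the ``extra parameter'' $\xi$ is stated in the paper to come from the multivariate Jacobi polynomials of~\cite{OO06,OO12} (cf.~\cite{Cuenca18}), where it does not enter as a bare factor $\xi^{\abs{\lambda}}$; a proof about an ad hoc $\xi^{\abs{\lambda}}$-tilted measure settles the conjecture only if you first show the two deformations coincide, which is itself a nontrivial claim you never address. (ii) The identification with a specific discrete orthogonal polynomial ensemble --- the actual content of the conjecture --- is exactly the step you defer: you name dual Hahn/$q$-Racah as candidates and propose Pearson-ratio matching, but do not execute it; moreover, the paper's own discussion indicates that the relevant families are the Chebyshev-type specializations of Jacobi polynomials (first or second kind according to the sign in $\alpha=\beta=\pm\frac{1}{2}$), playing the role that Meixner/Krawtchouk play in type $A$, so your proposed candidate family is likely not even the intended target. (iii) Positivity of the weight for general $\xi<0$, without which ``ensemble'' is meaningless away from $\xi=-1$, is flagged by you as an obstacle but no argument is offered; the squared-Vandermonde-times-weight form you invoke is automatic for \emph{any} weight and carries no content until positivity and the classical identification are established. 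As it stands, your proposal proves only the easy half of the conjecture, and only under a privately chosen definition of the deformation.
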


We note the the cases of $\alpha = \beta = \pm \frac{1}{2}$ correspond to kernels from Chebyshev polynomials (of the first or second kind depending on the sign), which are playing the role of the Meixner polynomials for the orthogonal and symplectic groups.
We also have the case when $\alpha = \beta = 0$ in the $BC$ $z$-measure, where the Jacobi polynomials specialize to Legendre polynomials.
However, this does not seem to correspond to a known skew Howe duality or character identity.
We remark that~\cite[($\mathsf{B}_x\mathsf{B}_y$)]{proctor1993reflection} (with all variables specialized to $1$) does not yield this identity because of the extra spin contribution, which means we use $M_1^{BC}(\lambda)$ rather than $M_1^D(\lambda)$ since $\dim V_{\SO_{2k+1}}(\mu + \fw_k) = 2^k \dim V_{\Sp_{2k}}(\mu)$.
Hence, using~\cite[($\mathsf{B}_x\mathsf{B}_y$)]{proctor1993reflection} produces the same measure as for $(\Sp_{2n}, \Sp_{2k})$ (see~\cite[Sec.~3]{proctor1993reflection} for the character identity).

\begin{problem}
Determine if there is a (skew) Howe duality for the case when $\alpha = \beta = 0$.
\end{problem}

Let us expand on the potential relationship between the extremal characters of ``big'' groups and super Howe duality.
Now we note that the skew Howe duality for the other pairs is a special case of the super Howe dualities $(\Sp_{2k}, \mathfrak{osp}(2l|2m))$ and $(\Or(k), \mathfrak{spo}(2m|2l))$ duality (see, \textit{e.g.},~\cite[Sec.~5.3]{CW12}).
In parallel to the $(\GL_n, \GL_k)$ case with $(U_{\infty} \times U_{\infty}) / U_{\infty}$, it is natural to suppose the corresponding infinite symmetric spaces in question (see Table~\ref{table:BC_values_limits}) are related to one or both of these super Howe dualities.
There is also a relationship between infinite rank Lie algebras and Lie superalgebras discussed in~\cite[Ch.~6]{CW12}.

\begin{problem}
Describe the relationship between harmonic analysis on ``big'' groups and representations of Lie superalgebras.
\end{problem}

We can push this parallel even further.
In Section~\ref{sec:howe_duality}, we noted a connection with the probability measure from~\cite[Lemma~2.2]{FN09} and Howe duality via lozenge tilings.
This yields Howe duality versus skew Howe duality based on the sign choice of $z'$ for the $BC$ $z$-measure as in the (type $A$) $z$-measure.
Furthermore, a similar picture for a half hexagon lozenge tiling is given in~\cite[Fig.~1]{BK10} in the special case of $b = c + 1$.\footnote{When we take the infinite limit $b,c \to \infty$ of the~\cite{BK10} quarter hexagon, we obtain our quarter hexagon with $k \to \infty$ after reflecting over the line $y = x$.}
Examining the probability measure from~\cite[Thm.~2.8]{BK10}, we see one factor being the dimension of a representation and the other being a determinant in the finite case.
We remark that a similar process for the symplectic group was constructed by Warren and Windridge~\cite{WW09}, which should correspond to $\alpha = \beta = \frac{1}{2}$.

From Corollary~\ref{cor:spectral_restriction}, another natural problem is to see if there is an extremal character so the spectral measure for positive $s$ from~\cite[Eq.~(16)]{OO12} corresponds to Howe duality and the orthosymplectic analog of the measure $\mu^{\square m}_{n,k}$ with possibly with $m = \infty$.
This would imply the specialized Jacobi polynomial ensemble can be described by (a restriction of) Howe duality.

\begin{problem}
Determine if there exists an extremal character $\omega$ such that the corresponding spectral measure is equal to the measure induced from Howe duality or restricted to partitions inside of an $\min(n,k) \times m$ rectangle.
\end{problem}

We note that for the extremal characters $(\mathbf{1}^m, 0, m)$ and $(\mathbf{1}^m, \mathbf{1}^k, mk)$ with $(\alpha, \beta) = (\pm\frac{1}{2},\pm\frac{1}{2})$, the matrix from~\cite[Thm.~2.4]{BK10} appears to has rational entries with positive determinants by computational evidence.
Furthermore, for $(\mathbf{1}^m, 0, m)$, the support appears to be limited to $\ell(\lambda) \leq m$.
However, these do not appear to correspond to any Howe duality.
It is possible that using $(\xi \mathbf{1}^m, 0, \xi m)$ will lead to a $\xi$-deformed version of a Cauchy identity from a Howe duality analogous to $\mu_{n,k;\xi}^{\square\infty}$ from~\eqref{eq:zeta_cauchy_measure}.

As noted in Remark~\ref{rem:spectral_det}, the matrix from~\cite[Thm.~2.4]{BK10} is distinct from our determinants for the extremal character $\omega = (0, \mathbf{1}^k, k)$.
Computational evidence suggests that all of the entries are positive integers divided by some power of $2$, which could be considered as contributing to the normalization constant.
Hence, there should be a combinatorial interpretation of these matrices $\mathcal{M}(\lambda)$ through the LGV lemma.

\begin{problem}
Find a nonintersecting lattice path interpretation of the matrices $\mathcal{M}(\lambda)$.
\end{problem}

There is also a Howe duality for $(\Sp_{2l}, \GL_k)$ and $(\GL_k, \Or_n)$~\cite[Ch.~3]{howe1995perspectives} (recall $l = \lfloor n/2 \rfloor$) decomposing the coordinate ring of the null fiber as
\begin{equation}
\label{eq:null_fiber_duality}
R(\mcN) \iso \sum_{\ell(\lambda) \leq \min(l, k)} V_{G_1}(\lambda) \otimes V_{G_2}(\lambda).
\end{equation}
Such measures were the focus in the work of Betea~\cite{Betea18}, where they were shown to be determinantal and explicit correlation kernels were computed.
On the other hand, there are other limits of symmetric spaces considered in~\cite[Table~II]{OO06}, which should produce $z$-measures.
This leads to the following problems.

\begin{problem}
Find a $z$-measure that specializes to the measure induced from~\eqref{eq:null_fiber_duality}.
\end{problem}

\begin{problem}
Determine if there are corresponding $z$-measures for the other spaces in~\cite[Table~II]{OO06} and if they correspond to a (skew) Howe duality.
\end{problem}

\bibliographystyle{alpha}
\bibliography{shapes}{}
\end{document}

==============================================================
Following~\cite{Adamovich96} let us consider $(\Pin_{2m}, SO_{2n+1})$ case. In this case  the natural module is also $V=\CC^{2m}$  and $W=\CC^{2n+1}$.
Consider the exterior algebra
\begin{equation}
U = \bigwedge V_{-}
\end{equation}
which is spinor $G_1$ - module.
The exterior algebra
\begin{equation}
\tilde{U} = \bigwedge W_{-}
\end{equation}
which is spinor $G_2$ - module.

The skew Howe duality in $(O_{2m}, SO_{2n})$ case:
\begin{equation}
\label{so2n}
\bigwedge(\CC^{m} \otimes \CC^{2n})
  \simeq (\bigwedge \CC^{2m})^{\otimes n}\simeq
   \bigwedge(V_- \otimes \CC^{2n})
  \simeq \bigoplus_{\lambda}
V_{O_{2m}}(\lambda)\otimes V_{SO_{2n}}(\lambda'),
\end{equation}
The skew Howe duality in $(\Sp_{2m}, \Sp_{2n})$ case:
\begin{equation}
\label{sp2n}
\bigwedge(\CC^{m} \otimes \CC^{2n})
  \simeq (\bigwedge \CC^{2m})^{\otimes n}\simeq
   \bigwedge(V_- \otimes \CC^{2n})
  \simeq \bigoplus_{\lambda}
V_{\Sp_{2m}}(\lambda)\otimes V_{\Sp_{2n}}(\lambda'),
\end{equation}
where $V_{G_1}(\lambda)$ (resp.~$V_{G_2}(\lambda')$) are simple $G_1$ (resp.~$G_2$) modules and second module in the equivalence is considered as a $G_1$-module, and the third module in the equivalence is a $G_2$-module.

Secondly, we will look at the left hand side of (\ref{so2n+1}), (\ref{so2n}),  (\ref{sp2n}). Note that 
\begin{equation}
    \bigwedge(\CC^{m} \otimes W) \simeq
    \left(\bigwedge (W)\right)^{\otimes m},
\end{equation}
where $W$ is the natural $G_2$-module. Therefore we can write the following decompositions
\begin{equation}
\label{multso2n+1}
\left(\bigwedge V_{SO_{2n+1}}(\Lambda_1)\right)^{\otimes m}
 \simeq \bigoplus_{\lambda}
2 \dim(V_{SO_{2m}}(\lambda))
 V_{SO_{2n+1}}(\lambda'),\end{equation}
\begin{equation}
\label{multso2n}
\left(\bigwedge V_{SO_{2n}}(\Lambda_1)\right)^{\otimes m}
 \simeq \bigoplus_{\lambda}
2\dim(V_{SO_{2m}}(\lambda))
 V_{SO_{2n}}(\lambda'),\end{equation}
\begin{equation}
\label{multsp2n}
\left(\bigwedge V_{\Sp_{2n}}(\Lambda_1)\right)^{\otimes m}
 \simeq \bigoplus_{\lambda}
\dim(V_{\Sp_{2m}}(\lambda))
 V_{\Sp_{2n}}(\lambda'),\end{equation}
We can notice that it is exactly the half of the Hodge decomposition of the exterior algebra. Therefore, 
\begin{equation}
  \left(\bigwedge (\fw_{1})\right)^{\otimes 2m}=\left(\bigoplus_{\lambda} 2 \dim(V_{SO_{2m}}(\lambda)) V_{SO_{2n+1}}(\lambda')\right)\otimes \left(\bigoplus_{\widetilde{\lambda}}
2 \dim(V_{SO_{2m}}(\widetilde{\lambda}))\otimes V_{SO_{2n+1}}(\widetilde{\lambda'})\right)
\end{equation}